\documentclass[11pt]{amsart}

\usepackage[T1]{fontenc}
\usepackage{mlmodern}
\usepackage{microtype}

\usepackage{amsmath,amsthm,amsfonts,amssymb,latexsym,textcomp}
\usepackage{mathtools}   
\usepackage{mathrsfs}    
\usepackage{bm}

\usepackage{graphicx}
\usepackage{xcolor}
\usepackage{caption}
\usepackage{subcaption}
\usepackage{enumerate}
\usepackage{float}
\usepackage[a4paper, hmargin=2.7cm, vmargin=2.3cm]{geometry}
\usepackage{tikz}
\usepackage{tikz-cd}
\usetikzlibrary{calc,decorations.pathreplacing}
\tikzcdset{row sep/Large/.initial=5em}
\tikzcdset{column sep/Large/.initial=5em}

\definecolor{theorem_color}{rgb}{0.35,0.0,0}
\definecolor{VividBurgundy}{RGB}{159,29,53}
\definecolor{Burgundy1}{RGB}{128,0,32}
\definecolor{darkbrown}{rgb}{0.4, 0.26, 0.13}
\definecolor{maroon}{rgb}{0.5, 0.0, 0.0}
\definecolor{gold}{rgb}{0.85, 0.65, 0.13}
\definecolor{BurntUmber}{RGB}{138, 51, 36}
\definecolor{darkpink}{rgb}{0.761,0.094,0.357}
\definecolor{darkorange}{HTML}{CC5500} 
\colorlet{LightBurntUmber}{BurntUmber!10!white}
\colorlet{figaccent}{darkorange} 

\numberwithin{equation}{section}

\makeatletter
\let\chi@orig\chi
\newcommand{\lifted@chi}[2]{\setbox\@tempboxa\hbox{$\m@th#1\chi@orig$}\mathord{\raise\dp\@tempboxa\box\@tempboxa}}
\newcommand{\liftedchi}{\mathpalette\lifted@chi\relax}
\renewcommand{\chi}{\liftedchi}
\makeatother

\newif\ifsubmission
\submissionfalse   
\ifsubmission
  \newcommand{\todo}[1]{}%
  \newcommand{\wap}{\errmessage{Missing proof (\string\wap) in submission build: resolve or remove it before submitting}}%
\else
  \newcommand{\todo}[1]{\textcolor{red}{#1}}%
  \newcommand{\wap}{\todo{Write a Proof!}}%
\fi

\theoremstyle{plain}
\newtheorem{theorem}{Theorem}[section]
\newtheorem{lemma}[theorem]{Lemma}
\newtheorem{proposition}[theorem]{Proposition}
\newtheorem{corollary}[theorem]{Corollary}

\theoremstyle{definition}
\newtheorem{definition}{Definition}[section]
\newtheorem{remark}[theorem]{Remark}

\theoremstyle{plain}
\newtheorem{conjecture}{Conjecture}

\DeclarePairedDelimiter{\prn}{(}{)}
\DeclarePairedDelimiter{\brk}{[}{]}
\DeclarePairedDelimiter{\abs}{|}{|}
\DeclarePairedDelimiter{\nrm}{\|}{\|}
\DeclarePairedDelimiter{\setbr}{\{}{\}}
\DeclarePairedDelimiter{\inn}{\langle}{\rangle}
\DeclarePairedDelimiter{\flr}{\lfloor}{\rfloor}

\newcommand{\lrp}[1]{\prn*{#1}}
\newcommand{\lrb}[1]{\brk*{#1}}

\newcommand{\lrset}[1]{\setbr*{#1}}
\newcommand{\lrab}[1]{\inn*{#1}}

\newcommand{\ab}[1]{\inn{#1}}
\newcommand{\norm}[1]{\nrm{#1}}
\newcommand{\set}[1]{\setbr{#1}}
\newcommand{\floor}[1]{\flr{#1}}

\newcommand{\define}{\textit}

\newcommand{\mc}{\mathcal}

\newcommand{\mr}{\mathscr}

\newcommand{\one}{\mathbf{1}}

\newcommand{\vp}{\varphi}

\newcommand{\suppressthis}[1]{}

\newcommand{\C}{\mathbb{C}}

\newcommand{\Q}{\mathbb{Q}}
\newcommand{\R}{\mathbb{R}}
\newcommand{\T}{\mathbb{T}}
\newcommand{\Z}{\mathbb{Z}}

\DeclareMathOperator{\supp}{supp}

\DeclareMathOperator{\Image}{Image}

\usepackage{hyperref}
\hypersetup{
    colorlinks=true,
    citecolor=darkpink,   
    linkcolor=darkorange, 
    urlcolor=darkpink,    
}

\begin{document}

\title[A Nivat counterexample for a full-affine-span window]{A Counterexample to Nivat's Conjecture for a Non-Convex Window of Full Affine Span}
\author{Abhishek Khetan}
\email{abhishek.khetan@ashoka.edu.in}
\subjclass[2020]{52C22, 37A15, 37B10, 05B45}
\keywords{tilings, periodic tiling conjecture, Nivat's conjecture, low complexity, orbit closure}
\date{\today}

\begin{abstract}
	We construct an exact cluster $F\subseteq\Z^2$ of cardinality $8$ with full affine span, together with an $F$-tiling $T$, such that the orbit closure of $T$ in $\set{0, 1}^{\Z^2}$ does not contain a $1$-periodic $F$-tiling.
	Since every $F$-tiling is a low-complexity configuration with respect to the window $\bar F := \set{-a: a\in F}$, this supplies a ``non-degenerate'' counterexample, in a strong sense, to Nivat's conjecture for non-convex windows.
	This answers, in the negative, a question of Kari and Moutot (2023) whether every such counterexample must be degenerate, in the sense that the probing window is contained in a coset of a proper finite-index sublattice.

	We complement this with a positive result: for every exact cluster $F$ of full affine span whose cardinality is the square of a prime, every $F$-tiling has a $1$-periodic $F$-tiling in its orbit closure.
	Together with Szegedy's theorem that every tiling by a cluster of prime cardinality is $1$-periodic, this shows that no cluster of fewer than $8$ cells can exhibit the phenomenon, with the possible exception of cardinality $6$, which we leave open.
\end{abstract}

\maketitle

\tableofcontents


\section{Introduction}
\label{section:introduction}

\subsection{Low complexity and Nivat's conjecture}

A \emph{configuration} is a colouring $c\colon\Z^2\to\mc A$ of the integer plane by finitely many symbols.
One measures how complicated a configuration is by counting the local patterns it displays through a fixed finite viewing region.

\begin{definition}[Window and complexity]
	\label{definition:complexity}
	A \define{window} (or \define{shape}) is a finite subset $D\subseteq\Z^2$.
	For a configuration $c\colon\Z^2\to\mc A$, the \define{$D$-patterns} of $c$ are the restrictions $c|_{v+D}$ as $v$ ranges over $\Z^2$ (the snapshots seen through $D$ as it slides over the plane), and the \define{$D$-complexity} $P_c(D)$ is the number of distinct $D$-patterns.
	The configuration $c$ has \define{low complexity} with respect to $D$ if
	\[
		P_c(D)\;\le\;\abs D,
	\]
	so that the window sees no more patterns than it has cells.
\end{definition}

A constant configuration has $P_c(D)=1$ for every $D$, while a generic one has $P_c(D)$ as large as
$\abs{\mc A}^{\abs D}$. Low complexity is thus a strong constraint, and configurations meeting it are
expected to be rigid. To make ``rigid'' precise we record the shift and its invariants.

\begin{definition}[Periodicity]
	\label{definition:periodicity}
	A configuration $c$ is \define{$1$-periodic} if there is a nonzero $v\in \Z^2$ such that $c(x+v)=c(x)$ for all $x\in \Z^2$.
	The \define{period group} of $c$ is the subgroup
	\[
		\set{v\in\Z^2 : c(x+v)=c(x) \text{ for all } x\in \Z^2}.
	\]
	Thus $c$ is $1$-periodic if and only if the period group has rank at least 1.
	The configuration is \define{biperiodic} if its period group has finite index in $\Z^2$, and \define{rank-$0$} (it \define{has no period}) if its period group is trivial.
\end{definition}

The set $\mc A^{\Z^2}$ of all configurations, with the product topology, is a compact metrizable space, and $\Z^2$ acts on it by the \define{shift} $(v\cdot c)(x)=c(x-v)$ for all $x\in \Z^2$.

\begin{definition}[Orbit closure]
	\label{definition:orbit closure}
	The \define{orbit closure} of a configuration $c$ is
	\[
		\overline{\Z^2\cdot c}=\operatorname{cl}\set{v\cdot c : v\in\Z^2},
	\]
	the smallest closed shift-invariant set containing $c$.
\end{definition}

The two definitions above measure complexity at opposite scales.
The count $P_c(D)$ is a \emph{local} statistic: it records only what the configuration reveals through one bounded window.
Periodicity is a constraint of the opposite kind: a period $v$ constrains the values of $c$ at points arbitrarily far apart.
In this language the bound $P_c(D)\le\abs D$ limits the \emph{local} complexity, while periodicity is a form of low \emph{global} complexity.
The basic prediction relating low complexity to periodicity is Nivat's conjecture, raised by Nivat in his $1997$ \textsc{icalp} keynote \cite{nivat1997}.
It is the two-dimensional analogue of the Morse--Hedlund theorem \cite{morse_hedlund_1940}.

\begin{conjecture}[Nivat]
	\label{conjecture:nivat}
	Let $D\subseteq\Z^2$ be a rectangle and $c\colon\Z^2\to\mc A$ a configuration with
	$P_c(D)\le\abs D$. Then $c$ is $1$-periodic.
\end{conjecture}

The conjecture remains open, but strong partial results are known.
Cyr and Kra \cite{cyr_kra_nonexpansive} proved that the conclusion holds under the stronger hypothesis $P_c(D)\le\tfrac12\abs D$ for a rectangle $D$.
Kari and Szabados \cite{kari_szabados_alg_geom}, by an algebraic-geometric method, established under the threshold hypothesis $P_c(D)\le\abs D$ a structural decomposition of low-complexity configurations as sums of periodic ones, and deduced an asymptotic form of the conjecture: a non-periodic configuration can be of low complexity with respect to only finitely many rectangles.
The convex analogue, that $P_c(D)\le\abs D$ for a finite \emph{convex} window $D$ (the restriction to $\Z^2$ of a convex region of $\R^2$) forces $1$-periodicity, was raised by Sander and Tijdeman \cite{sander_tijdeman_lattices}.
For arbitrary convex windows in place of rectangles, Kari and Moutot \cite{kari_mutot_low_comp} established the orbit-closure form: if $P_c(D)\le\abs D$ for a finite convex $D\subseteq\Z^2$, then $\overline{\Z^2\cdot c}$ contains a $1$-periodic configuration.
Whether $c$ itself must be $1$-periodic is the still-open convex form of the conjecture.

It is then natural to ask whether the rectangle, or the convexity relaxation, is essential: \emph{does low complexity $P_c(D)\le\abs D$ force periodicity for an \emph{arbitrary} finite window $D$?} Here the answer is no. 
As Kari and Moutot observe \cite{kari_mutot_low_comp}, the conclusion fails for general windows, but ``all counterexamples we know are based on periodic sublattices'': the window is contained in a coset of a proper finite-index sublattice.
The standard construction, going back to Cassaigne \cite{cassaigne_subword_complexity} (see also \cite{kari_low_complexity_survey, kari_mutot_low_comp}), superimposes two independent periodic layers indexed by the parity of the first coordinate.
Call a cell $(x,y)\in\Z^2$ \emph{even} or \emph{odd} according as $x$ is even or odd, so that $\Z^2$ splits into its even and odd columns.
On the even cells one writes a horizontally periodic colouring and on the odd cells a vertically periodic one, and one reads the result through a window all of whose cells are even, so that each placement of the window falls inside a single parity class.
Such a window never sees the two layers at once, so it cannot detect that their periods disagree, and the combined configuration
has low complexity yet no period.
In every such construction the window has \emph{proper} affine span.
They raise as a direction for future study the question of whether this sublattice structure is the only obstruction.
Before we proceed, we record the following terminology.

\begin{definition}[Affine span]
	\label{definition:affine span}
	A finite set $D\subseteq\Z^2$ has \define{full affine span} if its difference set $D-D=\set{a-b : a,b\in D}$ generates $\Z^2$ as a group.
	Equivalently, $D$ has full affine span if the smallest affine sublattice of $\Z^2$ containing $D$ is $\Z^2$ itself.
\end{definition}

Thus finding a non-degenerate counterexample is the same as asking for a window $D$ with full affine span together with a configuration $c\colon\Z^2\to\mc A$ that has low complexity with respect to $D$ but is not $1$-periodic.
This article has two complementary results.

\begin{enumerate}[(1)]
	\item \emph{A non-degenerate counterexample, in a strong sense.}
	We exhibit a window of full affine span and a configuration $c$ of low complexity with respect to it for which not even the orbit closure of $c$ contains a $1$-periodic configuration.

	\item \emph{A positive periodicity result.}
		The configurations we construct arise from tilings (see \S\ref{subsection:tilings intro}) of $\Z^2$ by a finite tile $F$.
		We show that whenever $\abs F$ is the square of a prime, every tiling of $\Z^2$ by translates of $F$ has a $1$-periodic $F$-tiling in its orbit closure.
\end{enumerate}

\subsection{Tilings as low-complexity configurations}
\label{subsection:tilings intro}
The configurations we use come from tilings.

\begin{definition}[Cluster]
	\label{definition:cluster}
	A finite subset of $\Z^2$ is a \define{cluster} (or a \define{tile}). 
\end{definition}

\begin{definition}[Tiling and exactness]
	\label{definition:tiling}
	An $F$-\define{tiling} is a subset $T\subseteq\Z^2$ such that every
	element of $\Z^2$ has a unique representation $a+t$ with $a\in F$ and $t\in T$.
	We write this as
	$\Z^2=F\oplus T$. The elements of $T$ will be referred to as \define{anchors}: each $t\in T$ marks one placed copy
	$F+t$ of the tile. A cluster $F$ is \define{exact} if it admits an $F$-tiling of $\Z^2$.
\end{definition}

Equivalently, $T$ is an $F$-tiling if and only if
\begin{equation}
	\label{equation:tiling condition}
	\sum_{a\in F} 1_T(v-a) = 1 \qquad\text{for all } v\in\Z^2,
\end{equation}
that is, every point of $\Z^2$ is covered by exactly one translate $F+t$.
Encoding $T$ by its indicator $1_T\in\set{0,1}^{\Z^2}$ turns a tiling into a configuration over the two-symbol alphabet.

For each fixed $v\in\Z^2$ the constraint \eqref{equation:tiling condition} reads $\sum_{a\in F}x(v-a)=1$ and involves only the finitely many coordinates $x(v-a)$, $a\in F$, so it cuts out a clopen subset of $\set{0,1}^{\Z^2}$.
The set of all $F$-tilings,
\[
	X_F=\lrset{x\in\set{0,1}^{\Z^2} : \sum_{a\in F}x(v-a)=1 \text{ for all } v\in\Z^2},
\]
is the intersection of these subsets over $v\in\Z^2$, hence closed, and it is clearly shift-invariant.
Thus $X_F$ is a subshift, and the orbit closure $\overline{\Z^2\cdot 1_T}$ of any $F$-tiling is contained in it.
In particular every configuration in $\overline{\Z^2\cdot 1_T}$ is again the indicator of an $F$-tiling.
We use this fact repeatedly, as it is what makes ``a $1$-periodic point of the orbit closure'' the same as ``a $1$-periodic $F$-tiling in the orbit closure.''

\subsubsection{Tilings are low-complexity.}
	\label{point:low complexity dictionary}
	Let
	\[
		\bar F=-F=\set{-a : a\in F}
	\]
	be the \define{reflected} cluster, and regard an $F$-tiling $T$ as the configuration $1_T\in\set{0,1}^{\Z^2}$.
	Rewriting \eqref{equation:tiling condition} as
	\[
		\abs{T\cap(v+\bar F)} 
		= |\set{(t,a)\in T\times F : t+a=v}|
		= 1 \qquad\text{for all } v\in\Z^2,
	\]
	we see that every translate $v+\bar F$ of the reflected cluster contains exactly one anchor of $T$.
	Hence the pattern $1_T|_{v+\bar F}$ is always a single $1$ among the $\abs F$ cells of $\bar F$, so
	\[
		P_{1_T}(\bar F) \;\leq\; \abs{\bar F} \;=\; \abs F .
	\]
	Every $F$-tiling is therefore a low-complexity configuration for the window $\bar F$ (which is in general non-convex).
To produce the counterexample promised above, it thus suffices to exhibit an exact cluster $F$ of full
affine span together with an $F$-tiling $T$ none of whose orbit-closure limits is periodic.
Therefore it suffices to refute the following conjecture.

\begin{conjecture}
	\label{conjecture:orbit closure one periodicity}
	Let $F$ be an exact cluster with full affine span. Then for every $F$-tiling $T$, the orbit
	closure $\overline{\Z^2\cdot T}$ contains a $1$-periodic $F$-tiling.
\end{conjecture}

\subsection{Results}

Our main result refutes Conjecture~\ref{conjecture:orbit closure one periodicity}, and with it the
non-convex analogue of Nivat's conjecture, in the strongest topological sense and with a
non-degenerate window.

\begin{theorem}[\S\ref{section:counterexample}]
	\label{theorem:main counterexample}
	There exist an exact cluster $F\subseteq\Z^2$ with $\abs{F}=8$ and full affine span, and an $F$-tiling $T$, such that the orbit closure $\overline{\Z^2\cdot T}$ contains no $1$-periodic $F$-tiling.
	Consequently, the configuration $1_T$ has complexity $\le\abs{\bar F}$ with respect to the non-convex, full-affine-span window $\bar F$, yet no configuration in its orbit closure is $1$-periodic.
\end{theorem}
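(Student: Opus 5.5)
The plan is to \emph{lift} the degenerate Cassaigne-type construction through an inflation, so that the sublattice obstruction survives inside the tiling while the cluster itself acquires full affine span. Concretely, choose a sublattice $M\subseteq\Z^2$ of index $8$ with cyclic quotient $\Z^2/M\cong\Z/8$ generated by the image of $e_1$. I would take
\[
M=\set{(x,y): x\equiv 3y \pmod 8}, \qquad a=(8,0),\quad b=(3,1),
\]
so that $a,b$ is a basis of $M$. Set
\[
F=\set{0,e_1}\oplus\set{0,a}\oplus\set{0,b}, \qquad \abs F=8 .
\]
Full affine span is immediate: $F-F$ contains $e_1$ and $b$, hence $e_2=b-3e_1$. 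Let $L=M\oplus\set{0,2e_1,4e_1,6e_1}=\set{(x,y):x\equiv y \pmod 2}$, which has index $2$, so that $\Z^2=\set{0,e_1}\oplus L$. Write $G=\set{0,a}\oplus\set{0,b}$. If $T\subseteq L$ satisfies $G\oplus T=L$, then $F\oplus T=\Z^2$. Moreover $L$ is the disjoint union of the four cosets $M+2je_1$, $j=0,1,2,3$, and $G\subseteq M$. Hence it suffices to tile each coset by $G$ independently, which is exactly the Cassaigne mechanism, now invisible to the window because of the extra summand $\set{0,e_1}$.

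Under the identification $M\cong\Z^2$ via $(a,b)$, the cluster $G$ becomes the $2\times 2$ square. Its tilings are of two kinds: stacks of horizontal $2$-wide strips, each strip shifted by $s_j\in\set{0,1}$, or the same with the roles of the two directions swapped. On the cosets $j=0,1$, I place a \emph{row-type} tiling whose shift sequence $(s_j)$ is Sturmian. On the cosets $j=2,3$, I place a \emph{column-type} tiling, again with a Sturmian shift sequence. The point of a Sturmian sequence is that every element of its orbit closure is itself Sturmian, hence non-periodic, and in particular never constant.

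The verification then runs in three steps. First, let $x$ lie in the orbit closure. Since $8v\in M$ for every $v$, and the shifts $v\cdot 1_T$ permute the four cosets of $M$ in $L$ (after accounting for the $e_1$-parity), I pass to a subsequence along which the coset permutation is constant. This shows that $x$ again restricts, on each coset of $M$ in its anchor lattice, to a $G$-tiling that is a limit of the corresponding layer. A limit of row-type tilings with Sturmian shifts is again row-type with Sturmian shifts, and similarly for columns. So $x$ has at least one row-type Sturmian layer and one column-type Sturmian layer. Second, suppose $w$ is a period of $x$. Then $8w$ is a period that fixes every coset, so it is a period of each layer. Third, compute the periods of a single layer. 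A row-type layer with non-periodic shifts has period group contained in $\Z a$, since any period with a nonzero $b$-component would force $(s_j)$ to be eventually, hence fully, periodic. Symmetrically, a column-type layer has period group contained in $\Z b$. Therefore $8w\in\Z a\cap\Z b=0$, so $w=0$. The low-complexity consequence then follows from \S\ref{point:low complexity dictionary}.

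The main obstacle I expect is the third step in the limit. Its delicate part is ruling out limits in which a layer degenerates into something with extra symmetry, for instance a layer that is simultaneously row- and column-type; this happens exactly when all the shifts agree. This is why I insist on shift sequences whose entire orbit closure consists of non-periodic sequences. I would verify carefully that a $2\times2$-square tiling of $\Z^2$ that is not a lattice tiling has a unique ``type'', and that its period lattice is exactly $2\Z$ along the strip direction whenever the shift sequence is aperiodic. A secondary bookkeeping point is tracking which coset of $L$ contains the anchors of a limit configuration, since $L$ itself may be shifted to $L+e_1$. This is handled by passing to a further subsequence.
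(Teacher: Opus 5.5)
Your proposal is correct and follows essentially the same route as the paper. Both write $F=\{0,u\}\oplus G$ with $G$ a $2\times 2$ square in a sublattice $M$, tile the cosets of $M$ inside an index-$2$ lattice independently by transverse Sturmian bricks, show that orientations and Sturmian phases survive passage to the orbit closure, and conclude because the row-type and column-type period groups meet only in $0$; the paper uses $M=3\Z\times 2\Z$ with three cosets (two vertical, one horizontal) where you use an index-$8$ lattice with four. Your device of replacing a period $w$ by $8w\in M$, which fixes every coset, is a mild simplification: it replaces the paper's case split on the first coordinate of the period modulo $3$, which relies on the horizontal class being unique and so would not directly handle your symmetric two-plus-two arrangement.
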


The mechanism of two transverse families of one-dimensional Sturmian ``bricks'' that never share a period is elementary in contrast to the intricate aperiodic clusters of Greenfeld and Tao \cite{greenfeld_tao_counterexample}.
We complement the counterexample with a positive result on the tilings from which our configurations
arise.

\begin{theorem}[\S\ref{section:prime squared}]
	\label{theorem:p squared intro}
	Let $F\subseteq\Z^2$ be an exact cluster of full affine span with $\abs F=p^2$ for a prime $p$.
	Then for every $F$-tiling $T$, the orbit closure $\overline{\Z^2\cdot T}$ contains a $1$-periodic $F$-tiling.
	Equivalently, Conjecture~\ref{conjecture:orbit closure one periodicity} holds when $\abs F=p^2$.
\end{theorem}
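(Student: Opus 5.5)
The plan is to combine the algebraic (polynomial) description of $F$-tilings with Szegedy's prime-cardinality theorem, using $\abs F=p^2$ to force a reduction to clusters of cardinality $p$ on a sublattice. Throughout, write $f=1_T$ and $F(X)=\sum_{a\in F}X^a$, so that the tiling condition \eqref{equation:tiling condition} reads $F(X)\,f=\one$.

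\emph{Step 1 (dilation).} Since $F(X)^p\equiv F(X^p)\pmod p$, the standard dilation lemma (Tijdeman) gives that $T$ is also a $qF$-tiling for every integer $q\ge 1$ coprime to $p$. Comparing the identities for $q$ and $q'$ with $q\equiv q'\pmod{p^2}$, I will extract an annihilator of $f-\frac1{p^2}\one$ that is a product of finitely many binomials $(1-X^{v_i})$. The directions $v_i$ are controlled by differences $a-b$ with $a,b\in F$. By Kari--Szabados this yields a decomposition $f=c+\sum_i f_i$ in which each $f_i$ is $v_i$-periodic.

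\emph{Step 2 (use of $\abs F=p^2$).} Partition $F=\bigsqcup_{r}F_r$ according to the residue class of $a$ modulo $p\Z^2$. Full affine span means the classes $r$ that occur do not all lie in a coset of a proper sublattice containing $p\Z^2$. Reducing the tiling identity modulo $p$ and using Frobenius, the mass each translate of $F$ assigns to a given residue class is constrained. The aim is to show that, after passing to a suitable limit in $\overline{\Z^2\cdot T}$, there is a direction $u$ such that $T$ restricted to each coset of a rank-one-extended sublattice $\Lambda\supseteq p\Z^2$ is a tiling of that coset by a cluster of cardinality $p$ (or of cardinality $1$). By Szegedy's theorem each such restricted tiling is then $1$-periodic.

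\emph{Step 3 (aligning periods and taking limits).} The pieces from Step 2 are individually $1$-periodic, but their periods may differ. Following the Kari--Moutot argument for convex windows, I will pick a direction $u$ among the $v_i$ and translate $T$ far along a line transverse to $u$. Since each periodic component $f_i$ with $v_i\neq u$ takes only finitely many values along any line, a subsequence of these translates converges. The limit tiling will have only components periodic along $u$, and this limit is then $u$-periodic.

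\emph{Main obstacle.} The hard step is Step 2: it must rule out the mechanism of Theorem~\ref{theorem:main counterexample}, in which two transverse Sturmian families coexist without a common period. The key is that with $p^2$ cells there is no room for both families. My first attempt will be a counting argument modulo $p$: show that if two independent directions $v_1,v_2$ both carry non-trivial (non-periodic in the other direction) components, then some class $F_r$ would need cardinality divisible by a prime other than $p$, or $F$ would lie in a coset of a proper sublattice. The first contradicts $\abs F=p^2$ and the second contradicts full affine span. If this fails, I will fall back on the Greenfeld--Tao structure theorem for $\Z^2$ tilings, which says $T$ is a union of $1$-periodic sets along lattice lines, and analyse the finitely many directions directly.
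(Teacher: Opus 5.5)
Your proposal has a genuine gap: everything that depends on $\abs F=p^2$ is deferred to Step~2, which is stated only as a goal, and Steps~1 and~3 cannot make up for it.

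Step~3 in particular supplies no mechanism. A translate of a $v_i$-periodic function is $v_i$-periodic, and so is any pointwise limit of such functions. So after extracting a subsequence along which the components converge, the limit has a decomposition with exactly the same directions, and nothing eliminates the components with $v_i\neq u$. The tiling $T=\Psi(V_a,H_b,V_c)$ of Section~\ref{section:counterexample} is a concrete witness. It is the disjoint union of a $(0,4)$-periodic set and a $(6,0)$-periodic set, and by Lemma~\ref{lemma:orientation preserved} every limit of its translates retains both families. For the same reason, your Greenfeld--Tao fallback (weak periodicity) gives nothing on its own.

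Step~2's target is also not well posed. If each of the $p$ cosets of an index-$p$ lattice were tiled by the anchors it contains using a $p$-cell (or $1$-cell) cluster, $T$ would have density at least $1/p$, not $1/p^2$. The most one could hope for is that all anchors lie in a single coset, and proving that would already be the whole theorem.

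Your proposed counting argument also aims at the wrong conclusion. A cluster with $\abs F=p^2$ and full affine span can have two transverse special directions. The square $Q$ of Section~\ref{section:counterexample} ($\abs Q=2^2$, full affine span) has $2$-divisible sections along both axes, and neither of your alternatives occurs for it: no class $F_r$ has size divisible by another prime, and $Q$ does not lie in a proper coset. So the exclusion cannot come from the arithmetic of $F$ alone; it must come from analysing the tilings.

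What $p^2$ really buys is Lemma~\ref{lemma:product structure}. If every section of $F$ along two distinct directions has size divisible by $p$, counting forces $F$ to be a full $p\times p$ grid, i.e.\ a product $A'\times B'$ in a unimodular basis. The $8$-cell cluster of Section~\ref{section:counterexample} has $2$-divisible sections along both axes but is not a grid, which is exactly where this breaks for $2^3$.

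The missing idea is the bridge from ``a direction carrying non-periodic behaviour'' to ``$F$ has $p$-divisible sections in that direction.'' The paper builds it spectrally:
the spectral measure $\nu$ of $1_A$ lives on finitely many $\ker\chi_g$ (Theorem~\ref{theorem:bhattacharya support});
the dilation lemma makes $\sum_{g\in F}\chi_{\alpha g}$ vanish on $\supp(\nu)\setminus\set 0$ for all $\alpha$ coprime to $p$;
infinitely many such zeros on one kernel give, via the prime-power R\'edei--de Bruijn--Schoenberg theorem, cyclotomic divisibility of every section (Lemma~\ref{lemma:divisible sections});
full span then makes $A',B'$ complete residue systems mod $p$.

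Even then the product case is not a contradiction. One still needs the fibre decomposition, Szegedy's rigidity for the row and column unions, and Lemma~\ref{lemma:residue block rigidity} to obtain a period $(p,0)$ or $(0,p)$ of $T$ itself (Proposition~\ref{proposition:product periodicity}).

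In the complementary case, at most one kernel carries infinitely many zeros, and there is no reduction to Szegedy at all. Instead, $\nu$ is concentrated on $\ker\chi_{g_0}$ plus finitely many atoms at which $\chi_{g_0}$ is not a root of unity. Lemma~\ref{lemma:rational independence vanishing} then forces $g_0\cdot 1_A=1_A$, so $\mu$-almost every point of the orbit closure is $1$-periodic.

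Neither this bridge nor either endgame appears in your plan.
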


It was already shown by Szegedy \cite{szegedy_algorithms_to_tile} that every tiling by a prime-cardinality cluster is $1$-periodic.
With this and the prime-squared result, no cardinality below the counterexample's $\abs F=8$ can exhibit aperiodic behaviour (given the full affine span hypothesis), except possibly clusters of cardinality $6$.
We discuss this briefly in \S\ref{subsection:threshold open} but leave it open.

The case $\abs F=p^2$ was treated in the author's earlier preprint \cite{khetan_order2}, where the orbit closure was shown to contain a tiling that can be partitioned into at most two $1$-periodic subsets (\cite[Corollary~5.11]{khetan_order2}).
Theorem~\ref{theorem:p squared intro} improves this to a single $1$-periodic point.
The treatment here does not rely on that preprint.
What follows records what is reused and what is new.

Shared with \cite{khetan_order2} are Bhattacharya's dynamical formulation and the spectral framework built on it, together with Lemma~\ref{lemma:rational independence vanishing}, reproduced verbatim from \cite[Lemma~5.6]{khetan_order2}.
Every other relevant lemma from \cite{khetan_order2} is reproved here in a stronger form.
The main one is \cite[Lemma~5.9]{khetan_order2}, which constrains the cluster through a cyclotomic divisibility of the sizes of its sections.
It reappears, strengthened, as Lemma~\ref{lemma:divisible sections}, whose part~(i) records much more about the sections than the divisibility of their sizes, and this finer information is what the present setting requires.
The case analysis in the proof of Theorem~\ref{theorem:p squared one periodic}, which corresponds to \cite[Theorem~5.10]{khetan_order2}, has the same second case, while its first case uses the strengthened constraint.
Carrying that case through, and exploiting the full affine span hypothesis, requires new combinatorial lemmas in \S\ref{subsection:combinatorial preparation} that were not needed in \cite{khetan_order2}.
Further, unlike in the preprint, the argument here makes no use of the Kari--Szabados method.

\subsection{On the use of AI}
The author used Anthropic's Claude Opus~4.8 and OpenAI's GPT-5.5 in this work.
The construction of \S\ref{section:counterexample} is due to GPT-5.5.
The author also used these models to draft the proof of Lemma \ref{lemma:residue block rigidity} and Proposition \ref{proposition:product periodicity} in \S\ref{subsection:combinatorial preparation}, and to improve the exposition throughout.
These tools are not authors of this paper.
Every definition, statement, and proof has been checked by the author, who is solely responsible for the correctness of the results and for any errors that remain.

\subsection{Acknowledgements}

It is a pleasure to thank Prof. Jarkko Kari and Étienne Moutot for valuable e-mail communication and encouragement.
Thanks are due to Gautam Aishwarya for his valuable comments.
The author is immensely grateful to Prof. Siddhartha Bhattacharya, whose remarkable proof of the periodic tiling conjecture in $\Z^2$ made him explore this corner of mathematics.


\section{The Counterexample}
\label{section:counterexample}

This section constructs the cluster $F$ and the $F$-tiling $T$ of Theorem~\ref{theorem:main counterexample} and proves it.
The mechanism is worth previewing, because it explains why the hypotheses of Bhattacharya's Theorem are not violated.
The tiling $T$ is assembled from infinitely many independent one-dimensional ``brick'' patterns running in two transverse directions, with aperiodic phases.
Each brick pattern is individually $1$-periodic, but the two transverse directions never share a period, so $T$ itself has \emph{no} period at all. Because the phases are aperiodic in the strongest sense (Sturmian), no limit of translates of $T$ manages to align into a periodic configuration either.
Bhattacharya's Theorem is untouched: the cluster $F$ still admits biperiodic tilings (the standard brick walls), which do not lie in the orbit closure of this particular $T$.

\subsection{The cluster \texorpdfstring{$F$}{F}}
\label{point:the cluster F}
Let
\begin{equation}
	\label{equation:definition of F}
	\begin{aligned}
		F 
		&= \set{(0,0),(3,0)}\boxplus\set{(0,0),(1,1)}\boxplus\set{(0,0),(0,2)}\\
		&= \lrset{(0,0),(3,0),(0,2),(3,2),(1,1),(4,1),(1,3),(4,3)},
	\end{aligned}
\end{equation}
the Minkowski sum of three two-point sets (every element of $F$ has a unique representation 
$$
	\epsilon_1(3,0)+\epsilon_2(1,1)+\epsilon_3(0,2),
$$
with $\epsilon_i\in\set{0,1}$, so $\abs{F}=8$).
The eight points of $F$ are shown in Figure~\ref{figure:cluster F}.

\medskip\noindent
\emph{$F$ is exact.}
Note that $\Lambda=2\Z\times 4\Z$ has index $8$ in $\Z^2$.
Reducing $F$ modulo $\Lambda$ gives $\set{0,1}\times\set{0,1,2,3}$, which is a set of 8 pairwise distinct representatives of cosets of $\Lambda$.
Hence $F$ itself is a set of 8 pairwise distinct representatives of cosets of $\Lambda$, and we conclude that 
\[
	\Z^2 = F\oplus\Lambda.
\]
So $\Lambda$ is an $F$-tiling, and consequently $F$ is exact.
This tiling is biperiodic.
The content of the section is that $F$ \emph{also} admits the aperiodic tiling $T$ built below.

\medskip\noindent
\emph{$F$ has full affine span.}
From $F-F$ we have the vectors $(3,0)$, $(1,1)$, and $(0,2)$.
Then
$$
	2(1,1)-(0,2)=(2,0)
	\quad \text{ and } \quad
	(3,0)-(2,0)=(1,0).
$$
Together with $(1,1)-(1,0)=(0,1)$, this shows $(1,0),(0,1)\in\langle F-F\rangle$, so $F-F$ generates $\Z^2$ (Definition~\ref{definition:affine span}).

\begin{figure}[H]
	\centering
	\begin{tikzpicture}[scale=0.62]
		\draw[black!10] (-1,-1) grid (5,4);
		\foreach \p in {(0,0),(3,0),(0,2),(3,2),(1,1),(4,1),(1,3),(4,3)} \fill[BurntUmber!82] \p rectangle ++(1,1);
		\draw[BurntUmber!55] (0,0) rectangle (1,1) (3,0) rectangle (4,1) (0,2) rectangle (1,3) (3,2) rectangle (4,3)
			(1,1) rectangle (2,2) (4,1) rectangle (5,2) (1,3) rectangle (2,4) (4,3) rectangle (5,4);
	\end{tikzpicture}
	\caption{
		The cluster $F=\set{(0,0),(3,0)}\boxplus\set{(0,0),(1,1)}\boxplus\set{(0,0),(0,2)}$, drawn as its eight unit cells.
	}
	\label{figure:cluster F}
\end{figure}

\subsection{Sturmian phases.}
\label{point:sturmian phases}
Fix an irrational $\alpha\in(0,1)$ and a real $\beta$, and define the binary sequence $a=(a_i)_{i\in\Z}$ by
\[
	a_i = \floor{(i+1)\alpha+\beta} - \floor{i\alpha+\beta}\in\set{0,1}.
\]
Such a sequence is called \define{Sturmian}.

\medskip\noindent
Before stating the facts we need, we recall some vocabulary from symbolic dynamics.
The space $\set{0,1}^{\Z}$ of binary sequences carries the product topology, in which it is compact and metrizable, and the \define{shift} $\sigma$ acts on it by $(\sigma a)_i=a_{i+1}$.
A \define{subshift} is a nonempty closed $\sigma$-invariant subset $\Omega\subseteq\set{0,1}^{\Z}$.
If $X$ is a subshift, then for any point $a$ in $X$, we define the \define{orbit} of $a$ as $\mc O_a = \set{\sigma^n x:\ n\in \Z}$ and the \define{orbit closure} of $a$ as $\Omega_a=\operatorname{cl}\set{\sigma^n a:n\in\Z}$.
The orbit closure of $a$ is the ``smallest subshift'' containing it.
A sequence is \define{periodic} if $\sigma^n a=a$ for some nonzero $n$.
Equivalently, $a$ is periodic if its orbit is finite.
A subshift is \define{minimal} if it has no proper nonempty subshift.
Minimality of a subshift $X$ is equivalent to the condition that the orbit of each point in $X$ is dense in $X$.
Finally, a subshift is \define{uniquely ergodic} if it carries exactly one $\sigma$-invariant Borel probability measure.
For such a subshift the \define{frequency} of the symbol $1$, namely 
$$
	\lim_{N\to\infty}
	\frac{1}{2N+1}\sum_{\abs i\le N}a_i,
$$
exists and takes one common value across all of its sequences.
We will use three standard facts about Sturmian sequences \cite[Ch.~2]{lothaire_words}.

\begin{lemma}[Standard facts about Sturmian sequences {\cite[Ch.~2]{lothaire_words}}]
	\label{lemma:sturmian facts}
	Let $a$ be a Sturmian sequence of slope $\alpha$, with $\bar a=(1-a_i)_{i\in \Z}$ its coordinatewise complement and $\Omega_a$ its orbit closure.
	\begin{enumerate}[$(\mathrm{S}1)$]
		\item $a$ is not periodic.
			Moreover $a\neq\bar a$, since the density of $1$'s is $\alpha$ in $a$ but $1-\alpha$ in $\bar a$, and $\alpha\neq 1-\alpha$.
			More generally, the frequency of $1$'s exists and equals $\alpha$ for every sequence in $\Omega_a$ (the Sturmian subshift is uniquely ergodic).
			Since $\alpha\neq 1-\alpha$, the orbit closures $\Omega_a$ and $\Omega_{\bar a}$ are disjoint.
		\item $\Omega_a$ is a minimal subshift.
		\item $\Omega_a$ contains no periodic sequence (in particular neither of the constant sequences).
	\end{enumerate}
\end{lemma}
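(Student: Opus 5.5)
The plan is to work throughout with the rotation coding $a_i=\floor{(i+1)\alpha+\beta}-\floor{i\alpha+\beta}$, which equals $1$ exactly when the fractional part $\{i\alpha+\beta\}$ lies in $[1-\alpha,1)$. The basic tools are Weyl equidistribution of $(i\alpha+\beta)_{i\in\Z}$ modulo $1$, the minimality of irrational rotations, and a description of $\Omega_a$ as a set of codings of rotation orbits. I would establish that description first. Every $b\in\Omega_a$ is a limit of shifts $\sigma^{n_k}a$. Passing to a subsequence, we may assume $\{n_k\alpha+\beta\}\to\gamma$ in the circle. Then for each fixed $i$, $b_i$ is the indicator of $\{i\alpha+\gamma\}\in[1-\alpha,1)$, possibly with the interval's endpoints handled from one side; this ambiguity affects at most one or two coordinates $i$. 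So every $b\in\Omega_a$ is an upper or lower mechanical word of slope $\alpha$ and some intercept $\gamma$.

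For (S1), the frequency statement follows directly from this description. Indeed $\sum_{\abs i\le N}b_i$ counts the indices $i$ with $\{i\alpha+\gamma\}$ in an interval of length $\alpha$, up to an $O(1)$ error. By Weyl's theorem, uniformly in $\gamma$ since the interval is fixed, this count is $(2N+1)\alpha+o(N)$. Hence every sequence in $\Omega_a$ has frequency exactly $\alpha$. Since $\bar a$ has frequency $1-\alpha$ and $\alpha\ne 1-\alpha$ (as $\alpha$ is irrational), we get $a\neq\bar a$. The same argument gives every element of $\Omega_{\bar a}$ frequency $1-\alpha$, so $\Omega_a\cap\Omega_{\bar a}=\emptyset$. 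Non-periodicity of $a$ is a special case of (S3), which I prove below.

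For (S3), suppose $b\in\Omega_a$ has $\sigma^n b=b$ with $n\neq0$. Then the frequency of $1$'s in $b$ equals the proportion of $1$'s in one period block, which is a rational number $m/n$. This contradicts the frequency $\alpha$ found above, since $\alpha$ is irrational. In particular neither constant sequence lies in $\Omega_a$, and $a$ itself is not periodic.

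For (S2), I would show that $a$ is uniformly recurrent: every finite word $w=a_{[k,k+L)}$ occurring in $a$ reappears with bounded gaps. The word $a_{[j,j+L)}$ is determined by which cell of the partition of the circle by the finitely many points $\{-i\alpha\}$ and $\{1-\alpha-i\alpha\}$, $0\le i\le L$, contains $\{j\alpha+\beta\}$. So $w$ occurs at every $j$ for which $\{j\alpha+\beta\}$ lies in the same open cell as $\{k\alpha+\beta\}$. If $\{k\alpha+\beta\}$ happens to lie on a cell boundary, I use instead a small one-sided half-open interval around it, which is where the upper/lower distinction enters. That cell, or half-interval, is a nondegenerate interval. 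Minimality of the irrational rotation makes $\{j\alpha+\beta\}$ return to any nondegenerate interval with bounded gaps, so $w$ recurs syndetically. The standard argument then shows that the orbit closure of a uniformly recurrent sequence is minimal. Every $b\in\Omega_a$ contains only words of $a$, and each such word reappears within a bounded window; so every word of $a$ occurs in $b$, hence $a\in\Omega_b$ and $\Omega_b=\Omega_a$.

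The main obstacle is the boundary bookkeeping: handling limit intercepts $\gamma$ that hit the discontinuity points, where one gets upper rather than lower mechanical words. I would deal with this by working with half-open intervals throughout. As noted above, this only affects $O(1)$ coordinates, which is harmless for frequencies, and it is harmless for recurrence once one-sided neighbourhoods are used. Since these facts are standard, in the paper I would state this sketch and cite \cite[Ch.~2]{lothaire_words} rather than reprove every detail.
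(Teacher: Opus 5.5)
Your proof is correct, but there is no paper proof to match it against. The paper cites \cite[Ch.~2]{lothaire_words} and only indicates, inside the statement, the density reasoning for $a\neq\bar a$ and $\Omega_a\cap\Omega_{\bar a}=\varnothing$. It attributes the existence of frequencies on $\Omega_a$ to unique ergodicity of the Sturmian subshift.

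You make everything rest on the rotation coding instead:
\begin{itemize}
\item You describe each $b\in\Omega_a$ as the coding of a rotation orbit, up to at most two coordinates. This gives frequency $\alpha$ for each sequence directly from Weyl equidistribution, with no appeal to unique ergodicity.
\item You then get (S3) straight from the irrationality of that frequency. One could instead deduce it from (S1) and (S2): a minimal subshift containing a periodic point is a single finite orbit, which would make $a$ periodic. Your route makes (S3) independent of minimality, and non-periodicity of $a$ becomes a special case rather than an input.
\end{itemize}
The brick lemma later uses only (S1) and (S3), namely the uniform frequency and the absence of periodic phases. So your argument shows that everything the paper actually needs reduces to Weyl's theorem, and (S2) never enters.

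Two small simplifications are available. First, every interval $[1-\alpha-i\alpha,\,1-i\alpha)$ is closed on the left. Hence the word $a_{[j,j+L)}$ is constant on each half-open cell $[x_k,x_{k+1})$ of your partition, and the recurrence step needs no separate boundary case. Second, $a\neq\bar a$ is trivial coordinatewise. The real content of the density remark is $\sigma^k a\neq\bar a$ for all $k$, which is what Lemma~\ref{lemma:bricks-sturmian}(1) uses, and your disjointness of $\Omega_a$ and $\Omega_{\bar a}$ delivers exactly that.
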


\noindent Property $(\mathrm{S}3)$ is the one we use: aperiodicity survives every limit of shifts.

\subsection{Brick tilings of the square}

The building block of the construction is the $2\times 2$ square
\[
	Q = \set{(0,0),(1,0),(0,1),(1,1)}.
\]
We need a precise description of two families of $Q$-tilings and their symmetries.
We first set up the families and characterize them combinatorially, with no reference to the phase (Lemma~\ref{lemma:bricks-basic}).
The periodicity properties the construction actually uses, where the phase is Sturmian, are isolated afterwards in Lemma~\ref{lemma:bricks-sturmian}.

\begin{definition}
	Let $a=(a_i)_{i\in\Z}\in\set{0,1}^{\Z}$ be any binary sequence, and put
	\[
		V_a = \set{(2i,\,2j+a_i) : i,j\in\Z},
		\qquad
		H_a = \set{(2i+a_j,\,2j) : i,j\in\Z}.
	\]
	We call $V_a$ the \define{standard vertical brick} and $H_a$ the \define{standard horizontal brick} with phase $a$.
	A \define{vertical brick} (resp.\ \define{horizontal brick}) is any translate of some standard vertical (resp. horizontal) brick.

	Thus a vertical brick is made of $2\times2$ squares stacked into vertical strips two columns wide, all
	aligned to columns of a single parity $e\in\set{0,1}$, which we call its \define{alignment}.
	The standard one
	$V_a$, whose strips are the $\set{2i,2i+1}\times\Z$, has alignment $0$ (\emph{even-aligned}), and a
	$(1,0)$-translate shifts it to alignment $1$ (\emph{odd-aligned}).
\end{definition}

To detect this structure we introduce a family of bookkeeping devices: maps that sample a configuration
along individual rows, and that we will feed into the closedness arguments below.

\begin{definition}
For $e\in\set{0,1}$ and $m\in\Z$ define the \define{slice maps}
\[
	\psi^e_m:\set{0,1}^{\Z^2}\to\set{0,1}^{\Z},
	\qquad
	\psi^e_m(x)=\big(x(2i+e,\,m)\big)_{i\in\Z}.
\]
The map $\psi^e_m$, the \emph{parity-$e$ slice} at row $m$, reads off the bits of $x$ along the row $y=m$ at the columns of parity $e$.
For a $Q$-tiling these bits mark exactly its anchors in that row at those columns.
\end{definition}

Each $\psi^e_m$ is continuous, since every output coordinate is a single input coordinate.

\begin{lemma}[Vertical and horizontal bricks]
	\label{lemma:bricks-basic}
	\begin{enumerate}[$(1)$]
		\item $V_a$ and $H_a$ are $Q$-tilings of $\Z^2$.\footnote{The verification is elementary, and the tiling $V_a$ already appears in \cite[\S1.3]{greenfeld_tao_2020}. We nonetheless give all the details, to keep the section self-contained.}
			Consequently every vertical brick and every horizontal brick is a $Q$-tiling, being a translate of one.
		\item A $Q$-tiling $x$ is a vertical brick if and only if
			\[
				\psi^0_m(x)=\mathbf 0\ \text{for all } m,
				\qquad\text{or}\qquad
				\psi^1_m(x)=\mathbf 0\ \text{for all } m;
			\]
			and the vanishing family then determines $x$ outright:
			\begin{itemize}
				\item if $\psi^1_m(x)=\mathbf 0$ for all $m$, then $x$ is even-aligned, $x=V_a$ with phase $a=\mathbf 1-\psi^0_0(x)$;
				\item if $\psi^0_m(x)=\mathbf 0$ for all $m$, then $x$ is odd-aligned, $x=(1,0)+V_a$ with phase $a=\mathbf 1-\psi^1_0(x)$.
			\end{itemize}
			The analogous statement, with the two coordinates exchanged, characterizes horizontal bricks.
	\end{enumerate}
\end{lemma}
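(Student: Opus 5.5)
We verify part~(1) directly from the definition of a tiling. For part~(2), we first reduce the odd-aligned and horizontal cases to the even-aligned vertical case by symmetry. The even-aligned case is then a local covering argument that forces the column-strip structure.

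\medskip\noindent
\emph{Part (1).} Take $(u,v)\in\Z^2$ and write $u=2i+s$ with $s\in\set{0,1}$. A square $Q+(2i',2j+a_{i'})$ can contain $(u,v)$ only if $u\in\set{2i',2i'+1}$, which forces $i'=i$. It then remains to find $j$ with $v-2j-a_i\in\set{0,1}$. Since the two consecutive integers $\set{v-a_i-1,\,v-a_i}$ contain exactly one even number, such a $j$ exists and is unique. So every cell has exactly one representation, and $V_a$ is a $Q$-tiling. The case of $H_a$ follows by exchanging coordinates. Translates of tilings are tilings, which gives the ``consequently'' clause.

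\medskip\noindent
\emph{Part (2), forward direction.} The anchors of $V_a$ all lie in even columns, so $\psi^1_m(V_a)=\mathbf 0$ for every $m$. A $(1,0)$-translate moves every anchor to an odd column, so it satisfies $\psi^0_m=\mathbf 0$ for every $m$. We also read off the phase. We have $\psi^0_0(V_a)_i=1$ exactly when $0=2j+a_i$ for some $j$, that is, when $a_i=0$. Hence $a=\mathbf 1-\psi^0_0(V_a)$, and similarly $a=\mathbf 1-\psi^1_0$ for the odd-aligned translate.

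\medskip\noindent
\emph{Part (2), converse.} This is the main step. Suppose $\psi^1_m(x)=\mathbf 0$ for all $m$, so that no anchor of $x$ lies in an odd column; the other case follows by translating by $(-1,0)$.

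\begin{enumerate}[(i)]
	\item Fix $i$ and a cell $(2i+1,m)$. It is covered by a square anchored at $(2i+1-s,\,m-r)$ with $s,r\in\set{0,1}$. The choice $s=0$ would put the anchor in an odd column, so $s=1$. Thus every cell of column $2i+1$ is covered by a square anchored in column $2i$.
	\item Let $A_i\subseteq\Z$ be the set of heights $y$ such that $(2i,y)$ is an anchor. By (i), the intervals $\set{y,y+1}$ for $y\in A_i$ cover $\Z$, since they cover column $2i+1$. These intervals are pairwise disjoint, because the corresponding squares are disjoint.
	\item Two disjoint intervals of length $2$ that cover $\Z$ must tile it, so $A_i=2\Z+c_i$ for some $c_i\in\set{0,1}$. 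The squares anchored in column $2i$ then exactly fill the strip $\set{2i,2i+1}\times\Z$.
	\item Every anchor lies in some even column $2i$, so the anchors of $x$ are exactly the points $(2i,\,2j+c_i)$. Hence $x=V_c$.
	\item Finally $c_i=0$ if and only if $0\in A_i$, so $c=\mathbf 1-\psi^0_0(x)$, which is the claimed formula for the phase.
\end{enumerate}

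The horizontal statement is obtained by exchanging the roles of the two coordinates throughout. The only step needing care is (i)--(iii): one must notice that covering the odd column forces squares anchored in the adjacent even column to fill the whole strip, and everything else is bookkeeping.
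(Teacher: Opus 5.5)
Your proof is correct and follows essentially the same route as the paper. Part (1) goes through the column-pair strips, which you phrase cellwise. The converse of (2) shows that, with no odd-column anchors, the squares anchored in column $2i$ alone tile the strip $\{2i,2i+1\}\times\Z$, so their heights form a domino partition of $\Z$ and hence a single parity class.

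The one addition needed is that the forward direction must treat an arbitrary vertical brick $V_a+(s,t)$, not just $V_a$ and $(1,0)+V_a$. This is immediate, since all its anchors lie in columns of parity $s$; the paper handles it by rewriting the translate as $(e,0)+V_{a'}$.
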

\begin{proof}
	\emph{(1)}
	We show $\set{Q+v : v\in V_a}$ partitions $\Z^2$.
	The squares fall into disjoint vertical strips, and within each strip they tile.
	We give the formal details.
	For $i\in\Z$ let $C_i=\set{2i,2i+1}\times\Z$ be the $i$-th \emph{column-pair
	strip}.
	The strips $C_i$ are disjoint and cover $\Z^2$.
	Every element of $V_a$ has the form $(2i,2j+a_i)$, so the square $Q+(2i,2j+a_i)$ lies in the strip $C_i$.
	Two anchors with distinct $x$-coordinates $2i\neq 2i'$ thus lie in distinct strips $C_i\neq C_{i'}$ and give disjoint squares.
	It therefore suffices to show that, for each fixed $i$, the squares $\set{Q+(2i,2j+a_i):\ j\in\Z}$ partition $C_i$. Now
	$$
		Q+(2i,2j+a_i)
		= \set{(2i, 2j+a_i), (2i+1, 2j+a_i), (2i, 2j+a_i+1), (2i+1, 2j+a_i+1)},
	$$
	fills both columns of $C_i$ across the two rows $\set{2j+a_i,\,2j+a_i+1}$. As $j$ ranges over $\Z$ these
	row-pairs are the consecutive blocks
	$$
		\dots,\set{a_i-2,a_i-1},\set{a_i,a_i+1},\set{a_i+2,a_i+3},\dots,
	$$
	which partition $\Z$. Hence the squares partition $C_i$, and letting $i$ vary gives $Q\oplus V_a=\Z^2$.
	The argument for $H_a$ is identical with the roles of the coordinates exchanged. Since a translate of a
	$Q$-tiling is again one, every vertical brick and every horizontal brick is a $Q$-tiling.

	\smallskip\noindent
	\emph{(2)}
	The forward direction is a direct computation. A vertical brick is, by definition, an arbitrary translate $V_a+(s,t)$ of a standard one.
	Absorbing the even part of $s$ into the column reindexing $i\mapsto i+\lfloor s/2\rfloor$ and the height shift $t$ into the phase (which replaces $a$ by its complement when $t$ is odd) writes it as $(e,0)+V_{a'}$ with $e=s\bmod 2\in\set{0,1}$.
	So it suffices to treat the alignment-$e$ brick $x=(e,0)+V_a$, whose anchors all lie in columns of parity $e$, at heights $\equiv a_i\pmod 2$.
	Hence $\psi^{1-e}_m(x)=\mathbf 0$ for every $m$ (the columns of parity $1-e$ hold no anchors), so one of the two slice families vanishes identically.
	And reading row $0$ gives $\psi^e_0(x)=\bar a$, that is $a=\mathbf 1-\psi^e_0(x)$.

	For the converse, suppose $\psi^1_m(x)=\mathbf 0$ for all $m$.
	Since $\psi^1_m(x)=\big(x(2i+1,m)\big)_{i\in \Z}$, this says precisely that $x$ has no anchor in any odd column, at \emph{any} row.
	Hence every anchor of $x$ lies in an even column, and the strip $\set{2i,2i+1}\times\Z$ is tiled by the squares anchored in column $2i$ alone (a square anchored in an odd column would occupy that odd column together with an even one, and there are no odd-column anchors).

	Fix $i$ and let $H_i=\set{h : (2i,h)\in x}$ be the set of anchor heights in column $2i$.
	The corresponding squares are the translates $Q+(2i,h)$, $h\in H_i$, and $Q+(2i,h)$ occupies the row-pair $\set{h,h+1}$ of the strip $\set{2i,2i+1}\times\Z$.
	Since these squares tile the strip, the pairs $\set{h,h+1}$ ($h\in H_i$) partition $\Z$.

	Such a domino partition of $\Z$ is forced to be a single parity class: if $h\in H_i$ then $h+1$ is the top of this domino, so the integer $h+2$ can only be covered as the bottom of the next domino, giving $h+2\in H_i$, while $h+1\notin H_i$ (its domino would overlap $\set{h,h+1}$).
	Inducting in both directions yields $H_i=h+2\Z$.
	Hence $H_i$ is a single residue class modulo $2$.
	Say $H_i=2\Z+a_i$ with $a_i\in\set{0,1}$.
	The column-$2i$ anchors are therefore just $\set{(2i,\,2j+a_i):j\in\Z}$.
	It follows that
	$$
		x = \set{(2i,\,2j+a_i):i,j\in\Z},
	$$
	which is none other than $V_a$, where $a=(a_i)_{i\in\Z}$.
	Finally, $(2i,0)\in V_a$ if and only if $a_i=0$, so the $i$-th entry of $\psi^0_0(x)$ is $1-a_i=\bar a_i$, that is, $\psi^0_0(x)=\bar a$.
	In other words, $a=\mathbf 1-\psi^0_0(x)$.
	
	If instead $\psi^0_m(x)=\mathbf 0$ for all $m$, the same argument with the two column parities exchanged
	shows $x$ has all its anchors in odd columns, so $x=(1,0)+V_a$ is the odd-aligned brick. The same computation as above, now reading the odd columns, gives the phase $a=\mathbf 1-\psi^1_0(x)$.
	The characterization of horizontal bricks is in turn the same argument with the two coordinates exchanged.
\end{proof}

We now specialize the phase to a Sturmian sequence and record the periodicity facts the construction relies
on.

\begin{lemma}[Periodicity of Sturmian bricks]
	\label{lemma:bricks-sturmian}
	Let $a$ be a Sturmian.
	\begin{enumerate}[$(1)$]
		\item The period group (Definition~\ref{definition:periodicity}) of $V_a$ is $\set{0}\times 2\Z$, and that of $H_a$ is $2\Z\times\set{0}$.\footnote{Again elementary, and consistent with the observation in \cite[\S1.3]{greenfeld_tao_2020} that $V_a$ is $\langle(0,2)\rangle$-periodic and biperiodic only when $a$ is periodic. We give the full computation regardless.}
		\item Every tiling in the orbit closure $\overline{\Z^2\cdot V_a}$ is a vertical brick whose phase lies in $\Omega_a\cup\Omega_{\bar a}$, and hence has period group $\set{0}\times 2\Z$.
			In particular it is invariant under $(0,2)$ and admits no horizontal period.
			The symmetric statement holds for $H_a$.
	\end{enumerate}
\end{lemma}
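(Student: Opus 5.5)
For part~(1) I will compute the period group of $V_a$ directly from its anchor set.
Let $v=(s,t)$ be a period, so $V_a+v=V_a$.
Every anchor of $V_a$ lies in an even column, and column $2i$ contains anchors at exactly the heights $2\Z+a_i$.
Translating by $v$ sends column $2i$ to column $2i+s$, so $s$ must be even, say $s=2k$.
Comparing the anchor heights in column $2i+2k$ then gives $a_i+t\equiv a_{i+k}\pmod 2$ for every $i$.
If $t$ is even, this says $a_{i+k}=a_i$ for all $i$, so $\sigma^k a=a$.
By $(\mathrm S1)$ this forces $k=0$.
If $t$ is odd, it says $\sigma^k a=\bar a$, which places $\bar a$ in $\Omega_a$ and contradicts the disjointness of $\Omega_a$ and $\Omega_{\bar a}$ in $(\mathrm S1)$.
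Hence $s=0$ and $t\in 2\Z$.
Conversely, $(0,2)$ is visibly a period, so the period group is $\set{0}\times 2\Z$.
The statement for $H_a$ follows by exchanging coordinates.

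For part~(2) I will first show that the property "vertical brick" is closed under limits, using the slice maps.
Take $x=\lim_n (v_n+V_a)$ with $v_n=(s_n,t_n)$.
Passing to a subsequence, I may assume the parities of $s_n$ and $t_n$ are constant, say $e$ and $f$.
Each translate is then $(e,0)+V_{a_n}$, where $a_n=\sigma^{-\lfloor s_n/2\rfloor}a$ if $f=0$ and $a_n$ is the complement of that shift if $f=1$ (this is the normalization from the proof of Lemma~\ref{lemma:bricks-basic}).
For every member of the sequence, $\psi^{1-e}_m=\mathbf 0$ for all $m$.
Each $\psi^{1-e}_m$ is continuous, so $\psi^{1-e}_m(x)=\mathbf 0$ for all $m$ as well.
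Since $X_Q$ is closed, $x$ is a $Q$-tiling, and Lemma~\ref{lemma:bricks-basic}(2) shows that $x=(e,0)+V_{a'}$ is a vertical brick with phase $a'=\mathbf 1-\psi^e_0(x)$.
By continuity, $a'=\mathbf 1-\lim_n\psi^e_0\bigl((e,0)+V_{a_n}\bigr)=\lim_n a_n$.
This limit lies in $\Omega_a$ if $f=0$ and in $\Omega_{\bar a}$ if $f=1$, because complementation is a continuous map sending $\Omega_a$ onto $\Omega_{\bar a}$.

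It remains to compute the period group of such a limit $x=(e,0)+V_{a'}$.
Translation by $(e,0)$ does not change the period group, so it is that of $V_{a'}$.
I rerun the argument of part~(1) with $a'$ in place of $a$.
This needs three facts: $a'$ is not periodic, which holds by $(\mathrm S3)$; $\Omega_{a'}\cap\Omega_{\bar a'}=\emptyset$; and $\bar a'$ is not a shift of $a'$.
For the last two I use minimality from $(\mathrm S2)$: $\Omega_{a'}$ equals $\Omega_a$ or $\Omega_{\bar a}$, and correspondingly $\Omega_{\bar a'}$ equals the other, so they are disjoint by $(\mathrm S1)$.
The only delicate point is this bookkeeping of the phase under odd vertical shifts, which the subsequence choice handles.
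The result is period group $\set{0}\times 2\Z$, which contains $(0,2)$ and no nonzero horizontal vector.
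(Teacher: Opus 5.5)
Your proof is correct and follows essentially the paper's argument. Part (1) is the same column-by-column parity computation. Part (2) is the paper's closedness argument for the vertical bricks with phase in $\Omega_a\cup\Omega_{\bar a}$ (continuity of the slice maps together with Lemma~\ref{lemma:bricks-basic}(2)), phrased with a convergent sequence of translates rather than by writing that set as an intersection of closed preimages. You also rule out $\sigma^k a'=\bar a'$ via the disjointness of $\Omega_a$ and $\Omega_{\bar a}$ instead of the density of $1$'s, and minimality is not actually needed there, since $\sigma^k a'$ and $\bar a'$ already lie in different ones of these two shift-invariant sets.
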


\begin{proof}
	\emph{(1)}
	Clearly $V_a$ is invariant under $\set{0}\times 2\Z$. For the converse, suppose $(m,n)$ is a period of
	$V_a$.
	We show $m=0$ and $n$ even.
	Since $(0,a_0)\in V_a$, we must also have 
	$$
		(m,n)+(0,a_0)=(m,n+a_0)\in V_a,
	$$
	and as every element of $V_a$ has even $x$-coordinate, $m$ is even.
	Say $m=2k$.
	For each $(i,j)\in\Z^2$ we have $(2i,2j+a_i)\in V_a$.
	Hence
	\[
		(m,n)+(2i,2j+a_i)=(2(k+i),\,n+2j+a_i)\in V_a,
	\]
	which forces $n+2j+a_i=2t+a_{k+i}$ for some $t\in\Z$.
	This implies that $n+a_i\equiv a_{k+i}\pmod 2$ for all $i$.
	If $n$ is even, then $a_{i+k}\equiv a_i\pmod 2$, so $a_{i+k}=a_i$ for all $i$, that is $\sigma^k a=a$.
	By $(\mathrm{S}1)$ of Lemma~\ref{lemma:sturmian facts}, no nonzero shift fixes a Sturmian sequence, so $k=0$ and $m=0$. If $n$ is odd, then
	$a_{i+k}=\bar a_i$ for all $i$, that is $\sigma^k a=\bar a$, impossible by $(\mathrm{S}1)$ since a shift
	preserves the density of $1$'s ($\alpha$ for $a$, but $1-\alpha\neq\alpha$ for $\bar a$). Hence $m=0$ and
	$n$ is even. The computation for $H_a$ is the transpose.

	\smallskip\noindent
	\emph{(2)}
	We observe that
	\[
		(2,0)\cdot V_a = V_{\sigma^{-1}a},
		\qquad
		(1,0)\cdot V_a = \set{(2i+1,2j+a_i):\ (i,j)\in\Z^2},
		\qquad
		(0,1)\cdot V_a = V_{\bar a},
	\]
	the middle set being the odd-aligned brick $(1,0)+V_a$ of phase $a$.
	Hence every translate of $V_a$ is a vertical brick whose phase lies in 
	$$
		\set{\sigma^n a:n\in\Z}\cup\set{\sigma^n\bar a:n\in\Z}\subseteq\Omega_a\cup\Omega_{\bar a}.
	$$
	Let $\mc B$ be the set of vertical bricks with phase in $\Omega_a\cup\Omega_{\bar a}$.
	Thus, by the above discussion, we know that $\Z^2\cdot V_a\subseteq\mc B$.
	We claim $\mc B$ is closed, whence it follows that $\overline{\Z^2\cdot V_a}\subseteq\mc B$.

	Split $\mc B$ according to alignment: $\mc B=\mc B_0\cup\mc B_1$, where $\mc B_e$ is the set of alignment-$e$ vertical bricks whose phase lies in $\Omega_a\cup\Omega_{\bar a}$.
	It suffices to show each $\mc B_e$ is closed, and we do so by exhibiting $\mc B_e$ as the intersection of two closed sets, one for each of its two defining conditions: the \emph{brick condition} (being an alignment-$e$ vertical brick) and the \emph{phase condition} (having phase in $\Omega_a\cup\Omega_{\bar a}$).

	For the brick condition, Lemma~\ref{lemma:bricks-basic}(2) says a $Q$-tiling $x$ is an alignment-$e$ vertical brick if and only if $\psi^{1-e}_m(x)=\mathbf 0$ for all $m$, so these bricks form
	\[
		B_e := \set{Q\text{-tilings}}\ \cap\ \bigcap_{m\in\Z}\big(\psi^{1-e}_m\big)^{-1}(\mathbf 0),
	\]
	which is closed because the $\psi^{1-e}_m$ are continuous, $\set{\mathbf 0}$ is closed, and the $Q$-tilings form a closed subset of $\set{0,1}^{\Z^2}$.

	For the phase condition, recall from Lemma~\ref{lemma:bricks-basic}(2) that on an alignment-$e$ vertical brick the phase is $\Phi_e(x):=\mathbf 1-\psi^e_0(x)$, and the map $\Phi_e\colon\set{0,1}^{\Z^2}\to\set{0,1}^{\Z}$ is continuous.
	Hence an $x\in B_e$ has phase in $\Omega_a\cup\Omega_{\bar a}$ precisely when $x\in\Phi_e^{-1}(\Omega_a\cup\Omega_{\bar a})$, and this preimage is closed, being the preimage of the closed set $\Omega_a\cup\Omega_{\bar a}$ under the continuous map $\Phi_e$.
	Therefore $\mc B_e=B_e\cap\Phi_e^{-1}(\Omega_a\cup\Omega_{\bar a})$ is closed, and so is $\mc B=\mc B_0\cup\mc B_1$.

	It remains to show that every $U\in\overline{\Z^2\cdot V_a}$ has period group $\set{0}\times 2\Z$. Its phase $\vp$ lies in
	$\Omega_a\cup\Omega_{\bar a}$, on which the density argument of $(\mathrm{S}1)$ of Lemma~\ref{lemma:sturmian facts} is uniform (density of
	$1$'s equal to $\alpha$ on $\Omega_a$, to $1-\alpha$ on $\Omega_{\bar a}$) and, by $(\mathrm{S}3)$, no phase is periodic.
	So the computation of part~$(1)$ applies verbatim to the standard brick $V_{\vp}$,
	giving it period group $\set{0}\times 2\Z$. Since period groups are translation-invariant and $U$ is a
	translate of $V_{\vp}$, the period group of $U$ is exactly $\set{0}\times 2\Z$: invariant under $(0,2)$,
	with no horizontal period. The statement for $H_a$ is the transpose.
\end{proof}

\noindent
Figure~\ref{figure:bricks} shows the two brick families.
The defining feature is that the fault lines run in a single direction: a vertical brick has unbroken \emph{vertical} fault lines and staggered horizontal joins, so it can be shifted vertically but, with an aperiodic phase, never horizontally.

\begin{figure}[H]
	\centering
	\subcaptionbox{Vertical brick $V_a$: period $(0,2)$, no horizontal period.\label{figure:vertical brick}}[0.47\linewidth]{%
	\begin{tikzpicture}[scale=0.42]
		\clip (0,0) rectangle (10,8);
		\foreach \k/\o in {0/0,1/1,2/0,3/0,4/1}{
			\foreach \m in {-1,0,1,2,3,4}{
				\fill[BurntUmber!18] (2*\k,2*\m+\o) rectangle (2*\k+2,2*\m+\o+2);
				\draw[BurntUmber!75,thin] (2*\k,2*\m+\o) rectangle (2*\k+2,2*\m+\o+2);
			}
		}
		\foreach \k in {0,1,2,3,4,5} \draw[darkbrown,very thick] (2*\k,0) -- (2*\k,8);
	\end{tikzpicture}}%
	\hfill
	\subcaptionbox{Horizontal brick $H_a$: period $(2,0)$, no vertical period.\label{figure:horizontal brick}}[0.47\linewidth]{%
	\begin{tikzpicture}[scale=0.42]
		\clip (0,0) rectangle (10,8);
		\foreach \k/\o in {0/0,1/1,2/1,3/0}{
			\foreach \m in {-1,0,1,2,3,4,5}{
				\fill[gold!22] (2*\m+\o,2*\k) rectangle (2*\m+\o+2,2*\k+2);
				\draw[darkbrown!70,thin] (2*\m+\o,2*\k) rectangle (2*\m+\o+2,2*\k+2);
			}
		}
		\foreach \k in {0,1,2,3,4} \draw[darkbrown,very thick] (0,2*\k) -- (10,2*\k);
	\end{tikzpicture}}
	\caption{The two brick tilings of the $2\times2$ square $Q$. Thick lines are the unbroken fault
	lines. The staggered offsets across strips are governed by the (aperiodic) Sturmian phase, which is why
	there is no period in the transverse direction.}
	\label{figure:bricks}
\end{figure}

\subsection{Assembling \texorpdfstring{$F$}{F}-tilings from three squares}

The link between $F$ and the square $Q$ is an explicit dictionary.
It says that to build an $F$-tiling it suffices to choose \emph{three independent} $Q$-tilings, indexed by the residue of the first coordinate modulo $3$, and interleave them.

\begin{proposition}[The interleaving dictionary]
	\label{proposition:dictionary}
	Given three subsets $\tau_0,\tau_1,\tau_2\subseteq\Z^2$, define
	\begin{equation}
		\label{equation:Psi}
		\Psi(\tau_0,\tau_1,\tau_2) = \lrset{(3i+d,\,2j) : d\in\set{0,1,2},\ (i,j)\in\tau_d}\subseteq\Z^2.
	\end{equation}
	Then $\Psi(\tau_0,\tau_1,\tau_2)$ is an $F$-tiling of $\Z^2$ if and only if each $\tau_d$ is a
	$Q$-tiling of $\Z^2$.
\end{proposition}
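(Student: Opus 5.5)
The plan is to factor $F$ through the linear map $\phi(u,v)=(3u,2v)$ and reduce the $F$-tiling condition to three independent $Q$-tiling conditions by a unique-decomposition argument.

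First I rewrite $F$ from \eqref{equation:definition of F} as
\[
	F=\set{(0,0),(1,1)}\oplus\phi(Q).
\]
This holds because $\set{(0,0),(3,0)}\boxplus\set{(0,0),(0,2)}=\phi(Q)$ and the Minkowski sum is direct, as already noted after \eqref{equation:definition of F}. Similarly, \eqref{equation:Psi} says exactly that
\[
	\Psi(\tau_0,\tau_1,\tau_2)=\bigcup_{d=0}^{2}\bigl((d,0)+\phi(\tau_d)\bigr).
\]
Here the union is disjoint, since the three pieces lie in distinct residue classes of the first coordinate modulo $3$.

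The key structural step, which I expect to be the only real point to check, is the direct-sum decomposition
\[
	\Z^2=E\oplus\phi(\Z^2),\qquad
	E:=\set{(0,0),(1,1)}\oplus\set{(0,0),(1,0),(2,0)}.
\]
Since $\phi(\Z^2)=3\Z\times2\Z$ has index $6$, it suffices to check that the six points of $E$ are pairwise incongruent modulo $3\Z\times 2\Z$. Their residues are $(0,0),(1,0),(2,0),(1,1),(2,1),(0,1)$, which are distinct. Consequently every $w\in\Z^2$ has a unique expression
\[
	w=e+(d,0)+\phi(z),\qquad e\in\set{(0,0),(1,1)},\ d\in\set{0,1,2},\ z\in\Z^2,
\]
and $w\mapsto(e,d,z)$ is a bijection onto $\set{(0,0),(1,1)}\times\set{0,1,2}\times\Z^2$.

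Next I count representations. A representation $w=f+s$ with $f\in F$ and $s\in\Psi$ amounts to choosing $f=e'+\phi(q)$ with $q\in Q$ and $s=(d',0)+\phi(t)$ with $t\in\tau_{d'}$. Then $w=e'+(d',0)+\phi(q+t)$, and uniqueness of the decomposition forces $e'=e$, $d'=d$ and $q+t=z$, because $\phi$ is injective. Conversely, each pair $(q,t)\in Q\times\tau_d$ with $q+t=z$ gives a representation of $w$. Thus the number of representations of $w$ in $F+\Psi$ equals $\abs{\set{(q,t)\in Q\times\tau_d: q+t=z}}$, the number of ways $z$ is covered by $Q+\tau_d$.

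Finally, since $(e,d,z)$ ranges over all triples as $w$ ranges over $\Z^2$, every $w$ has exactly one representation if and only if, for every $d$ and every $z$, $z$ is covered exactly once by $Q+\tau_d$. By \eqref{equation:tiling condition} this says precisely that each $\tau_d$ is a $Q$-tiling, which proves both directions at once.
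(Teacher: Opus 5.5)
Your proof is correct and is essentially the paper's argument: your unique decomposition $w=e+(d,0)+\phi(z)$ is exactly what the paper obtains by solving $f+t=p$ coordinate by coordinate. Your $e$ is the paper's parity bit $\epsilon_2=y\bmod 2$, your $d$ is its $(x-\epsilon_2)\bmod 3$, and your $z$ is its $(x',y')$; both arguments then count the points of $Q+\tau_d$ covering that point, the only difference being that you certify uniqueness by checking that $E$ is a complete set of coset representatives of $3\Z\times2\Z$, whereas the paper solves for the decomposition explicitly.
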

\begin{proof}
	Recall from \eqref{equation:definition of F} that every $f\in F$ can be uniquely written as
	$$
		f=(3\epsilon_1+\epsilon_2,\ \epsilon_2+2\epsilon_3)
	$$
	with $\epsilon_1,\epsilon_2,\epsilon_3\in\set{0,1}$.
	Write $T=\Psi(\tau_0,\tau_1,\tau_2)$.
	We must decide, for each $p=(x,y)\in\Z^2$, how many pairs $(f,t)\in F\times T$ satisfy $f+t=p$.
	The set $T$ is an $F$-tiling precisely when this count is always $1$.

	Take $t=(3i+d,2j)$ with $(i,j)\in\tau_d$. The equation $f+t=p$ reads
	\[
		x = 3\epsilon_1+\epsilon_2+3i+d,
		\qquad
		y = \epsilon_2+2\epsilon_3+2j.
	\]
	We solve it from the outside in. The second coordinate forces the parity bit
	\[
		\epsilon_2 = y\bmod 2,
		\qquad\text{and then}\qquad
		\epsilon_3+j = \tfrac{1}{2}(y-\epsilon_2)=:y',
		\quad\text{so } j=y'-\epsilon_3 .
	\]
	With $\epsilon_2$ now fixed, the first coordinate forces the residue $d$ and an integer $x'$:
	\[
		d = (x-\epsilon_2)\bmod 3,
		\qquad
		\epsilon_1+i = \tfrac{1}{3}(x-\epsilon_2-d)=:x',
		\quad\text{so } i=x'-\epsilon_1 .
	\]
	Thus $\epsilon_2$, $d$, $x'$, $y'$ are \emph{determined} by $p$, and the only remaining freedom is the choice of $\epsilon_1,\epsilon_3\in\set{0,1}$. For each such choice we obtain the candidate lattice point
	\[
		(i,j)=(x'-\epsilon_1,\,y'-\epsilon_3),
	\]
	and the pair $(f,t)$ is admissible exactly when this point lies in $\tau_d$. As $(\epsilon_1,\epsilon_3)$ ranges over $\set{0,1}^2$, the four candidate points are
	\[
		\lrset{(x'-\epsilon_1,\,y'-\epsilon_3) : \epsilon_1,\epsilon_3\in\set{0,1}}
		= (x',y') - Q .
	\]
	Therefore the number of admissible pairs $(f,t)$ with $f+t=p$ equals the number of points of $(x',y')-Q$ that lie in $\tau_d$, namely 
	$$
		\sum_{q\in Q}1_{\tau_d}\big((x',y')-q\big).
	$$
	Consequently $T$ is an $F$-tiling if and only if, for every $p$ (equivalently, for every residue
	$d$ and every $(x',y')\in\Z^2$),
	\[
		\sum_{q\in Q}1_{\tau_d}\big((x',y')-q\big)=1 .
	\]
	By the tiling criterion \eqref{equation:tiling condition}, this is exactly the statement that
	each $\tau_d$ is a $Q$-tiling. As $d=(x-\epsilon_2)\bmod 3$ takes all three values and $(x',y')$
	ranges over all of $\Z^2$ as $p$ does, the three conditions (one per $d$) are independent, completing the
	proof.
\end{proof}

\noindent
The dictionary loses no information: the three ingredients can be read back off the assembled tiling.

\begin{lemma}[Injectivity of $\Psi$]
	\label{lemma:Psi injective}
	For all subsets $\tau_0,\tau_1,\tau_2\subseteq\Z^2$ and every $d\in\set{0,1,2}$,
	\begin{equation}
		\label{equation:Psi recovery}
		\tau_d=\set{(i,j)\in\Z^2 : (3i+d,\,2j)\in\Psi(\tau_0,\tau_1,\tau_2)}.
	\end{equation}
	In particular $\Psi$ is injective, and its image is exactly the family of subsets of $\Z^2$ whose points all
	have even second coordinate.
\end{lemma}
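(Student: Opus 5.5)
The plan is to prove the recovery formula \eqref{equation:Psi recovery} directly from the definition \eqref{equation:Psi}, by showing that the encoding $(d,i,j)\mapsto(3i+d,\,2j)$ is a bijection from $\set{0,1,2}\times\Z^2$ onto $\Z\times 2\Z$. Injectivity of $\Psi$ and the description of its image will then follow from this one observation.

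First I will prove the inclusion ``$\subseteq$'' in \eqref{equation:Psi recovery}. If $(i,j)\in\tau_d$, then $(3i+d,2j)\in\Psi(\tau_0,\tau_1,\tau_2)$ by definition, so $(i,j)$ lies in the right-hand side.

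For the reverse inclusion, suppose $(3i+d,2j)\in\Psi$. Then $(3i+d,2j)=(3i'+d',2j')$ for some $d'\in\set{0,1,2}$ and some $(i',j')\in\tau_{d'}$. Comparing second coordinates gives $j=j'$. Comparing first coordinates gives $3i+d=3i'+d'$, and uniqueness of division with remainder by $3$ (both $d,d'\in\set{0,1,2}$) forces $d=d'$ and $i=i'$. Hence $(i,j)\in\tau_d$.

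Injectivity is then immediate: if $\Psi(\tau_0,\tau_1,\tau_2)=\Psi(\tau'_0,\tau'_1,\tau'_2)$, applying \eqref{equation:Psi recovery} to both sides gives $\tau_d=\tau'_d$ for every $d$.

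For the image, every point of $\Psi(\cdot)$ clearly has even second coordinate. Conversely, given any $S\subseteq\Z\times2\Z$, I will define
\[
	\tau_d:=\set{(i,j):(3i+d,2j)\in S}.
\]
Every $(x,y)\in S$ can be written as $(3i+d,2j)$ with $d=x\bmod 3$, $i=(x-d)/3$ and $j=y/2$, so $S=\Psi(\tau_0,\tau_1,\tau_2)$.

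No step is a genuine obstacle. The only point needing care is the uniqueness of the decomposition $x=3i+d$ with $d\in\set{0,1,2}$, which is what separates the three layers $\tau_0,\tau_1,\tau_2$ from one another.
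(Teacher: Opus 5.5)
Your proof is correct and follows essentially the same route as the paper: you get the recovery formula from the uniqueness of writing $x=3i+d$ with $d\in\set{0,1,2}$, injectivity follows immediately, and you identify the image via the explicit preimage $\tau_d=\set{(i,j):(3i+d,2j)\in S}$. The one step you leave implicit, the inclusion $\Psi(\tau_0,\tau_1,\tau_2)\subseteq S$, follows at once from the definition of $\tau_d$, and the paper states it explicitly.
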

\begin{proof}
	By \eqref{equation:Psi} a point $(x, y)$ of $\Psi(\tau_0,\tau_1,\tau_2)$ has the form $(3i+d,2j)$ with $d\in\set{0,1,2}$ and $(i,j)\in\tau_d$, and its coordinates determine $d$ as the residue of the first coordinate modulo $3$ and then $(i,j)=\big(\tfrac{x-d}{3},\tfrac{y}{2}\big)$.
	Thus $(3i+d,2j)$ lies in $\Psi(\tau_0,\tau_1,\tau_2)$ if and only if $(i,j)\in\tau_d$, which is \eqref{equation:Psi recovery}.
	Since it expresses each $\tau_d$ in terms of the set $\Psi(\tau_0,\tau_1,\tau_2)$, the map $\Psi$ is injective.
	For the image, note first that every point of $\Psi(\tau_0,\tau_1,\tau_2)$ has even second coordinate $2j$, so the image is contained in the family of subsets of $\Z\times2\Z$.
	Conversely, given any $S\subseteq\Z\times2\Z$, define 
	$$
		\tau_d=\set{(i,j):(3i+d,2j)\in S}
	$$
	for $d\in\set{0,1,2}$.
	Every point of $S$ has even second coordinate $2j$ and first coordinate uniquely of the form $3i+d$ with $d=x\bmod 3$, so it lies in $\Psi(\tau_0,\tau_1,\tau_2)$.
	Further, every point of $\Psi(\tau_0,\tau_1,\tau_2)$ lies in $S$ by the definition of the $\tau_d$.
	Hence $\Psi(\tau_0,\tau_1,\tau_2)=S$, so every subset of $\Z\times2\Z$ is in the image.
\end{proof}

\begin{lemma}[Equivariance of $\Psi$]
	\label{lemma:equivariance}
	For all $Q$-tilings $\tau_0,\tau_1,\tau_2$,
	\begin{align}
		(0,2)\cdot\Psi(\tau_0,\tau_1,\tau_2) &= \Psi\big(\tau_0+(0,1),\,\tau_1+(0,1),\,\tau_2+(0,1)\big),
		\label{equation:equivariance vertical}\\
		(3,0)\cdot\Psi(\tau_0,\tau_1,\tau_2) &= \Psi\big(\tau_0+(1,0),\,\tau_1+(1,0),\,\tau_2+(1,0)\big),
		\label{equation:equivariance horizontal}\\
		(1,0)\cdot\Psi(\tau_0,\tau_1,\tau_2) &= \Psi\big(\tau_2+(1,0),\,\tau_0,\,\tau_1\big).
		\label{equation:equivariance permutation}
	\end{align}
	Moreover $(0,1)\cdot\Psi(\tau_0,\tau_1,\tau_2)$ has all anchors with \emph{odd} second coordinate, so it is not of the form $\Psi(\,\cdot\,)$.
	The image of $\Psi$ is invariant under the index-$2$ subgroup $\Z\times 2\Z$, and is interchanged with its complementary parity class by $(0,1)$.
\end{lemma}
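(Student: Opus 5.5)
The proof is a direct computation with the shift convention $(v\cdot c)(x)=c(x-v)$. For sets this reads $v\cdot S=S+v$. I will use the recovery formula \eqref{equation:Psi recovery} from Lemma~\ref{lemma:Psi injective}: to identify a set $S\subseteq\Z\times2\Z$ as $\Psi(\sigma_0,\sigma_1,\sigma_2)$, it suffices to show that $(i,j)\in\sigma_d$ if and only if $(3i+d,2j)\in S$. None of the three identities needs the $Q$-tiling hypothesis; it only ensures that the right-hand sides are again of the relevant type, via Proposition~\ref{proposition:dictionary}.

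For \eqref{equation:equivariance vertical}, I check that $(3i+d,2j)\in\Psi(\tau_0,\tau_1,\tau_2)+(0,2)$ if and only if $(3i+d,2j-2)\in\Psi(\tau_0,\tau_1,\tau_2)$. By recovery this holds if and only if $(i,j-1)\in\tau_d$, that is, $(i,j)\in\tau_d+(0,1)$. The second identity \eqref{equation:equivariance horizontal} is the same computation: $(3i+d-3,2j)=(3(i-1)+d,2j)$ keeps the residue $d$ and gives $(i,j)\in\tau_d+(1,0)$.

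The only step needing care is \eqref{equation:equivariance permutation}, where the residue changes. A point $(3i+d,2j)$ lies in $\Psi(\tau_0,\tau_1,\tau_2)+(1,0)$ if and only if $(3i+d-1,2j)\in\Psi(\tau_0,\tau_1,\tau_2)$. I split by $d$. For $d=1,2$, the point $(3i+d-1,2j)$ has residue $d-1$ with the same $i$, so the condition is $(i,j)\in\tau_{d-1}$. This gives new components $\tau_0$ at index $1$ and $\tau_1$ at index $2$. For $d=0$, $3i-1=3(i-1)+2$, so the condition is $(i-1,j)\in\tau_2$, that is, $(i,j)\in\tau_2+(1,0)$. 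This matches $\Psi(\tau_2+(1,0),\tau_0,\tau_1)$.

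For the remaining claims, every point of $\Psi(\cdot)$ has even second coordinate, so $(0,1)\cdot\Psi(\cdot)$ consists of points with odd second coordinate. It is nonempty because a $Q$-tiling is nonempty, hence it lies outside the image, whose members lie in $\Z\times2\Z$ (Lemma~\ref{lemma:Psi injective}). The identities for $(1,0)$ and $(0,2)$ generate $\Z\times2\Z$, and their inverses are handled by the same computation. Together with Proposition~\ref{proposition:dictionary}, which says translates of $Q$-tilings are $Q$-tilings, this shows that the image of $\Psi$ on $Q$-tilings is $\Z\times2\Z$-invariant. Finally, $(0,1)$ carries subsets of $\Z\times2\Z$ bijectively onto subsets of $\Z\times(2\Z+1)$ and back, which exchanges the two parity classes. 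I expect no real obstacle beyond keeping the sign convention and the residue wrap-around straight in the $d=0$ case.
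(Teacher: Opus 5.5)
Your proof is correct and takes the paper's approach: a direct anchor-by-anchor computation, with the only real care needed at the residue wrap-around $3i-1=3(i-1)+2$. The paper pushes anchors forward, whereas you pull back through the recovery formula, which gives both inclusions at once. One small slip: that translates of $Q$-tilings are again $Q$-tilings is immediate from the definition (and is noted in the proof of Lemma~\ref{lemma:bricks-basic}(1)); it is not the content of Proposition~\ref{proposition:dictionary}.
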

\begin{proof}
	Each identity follows by applying the shift to a generic anchor $(3i+d,2j)$ of $\Psi(\tau_0,\tau_1,\tau_2)$
	and rewriting the result in the normal form $(3i'+d',2j')$ with $d'\in\set{0,1,2}$.

	\smallskip\noindent
	\emph{The vertical shift \eqref{equation:equivariance vertical}.} Here $(0,2)$ sends
	$(3i+d,2j)\mapsto(3i+d,2(j+1))$, which keeps the residue $d$ and replaces $j$ by $j+1$, i.e.\ shifts every
	$\tau_d$ by $(0,1)$.

	\smallskip\noindent
	\emph{The horizontal shift \eqref{equation:equivariance horizontal}.} Here $(3,0)$ sends
	$(3i+d,2j)\mapsto(3(i+1)+d,2j)$, shifting every $\tau_d$ by $(1,0)$.

	\smallskip\noindent
	\emph{The unit horizontal shift \eqref{equation:equivariance permutation}.} Here $(1,0)$ sends $(3i+d,2j)\mapsto(3i+d+1,2j)$. For $d\in\set{0,1}$ this is $(3i+(d+1),2j)$, lowering the index $d+1$ onto the
	tiling $\tau_d$, while for $d=2$ it is $(3(i+1)+0,2j)$, placing $\tau_2$ shifted by $(1,0)$ into residue
	$0$. Reading off the residue classes gives the stated cyclic permutation.

	Finally, the parity claim is immediate since $\Psi$ produces only even second coordinates.
\end{proof}

\subsection{The aperiodic tiling \texorpdfstring{$T$}{T} and the proof}

Fix any three Sturmian sequences $a,b,c$. Their slopes and offsets play no role in what follows (only that each sequence is Sturmian, hence aperiodic), so for definiteness take all three of slope $\alpha=\tfrac{\sqrt5-1}{2}$ (one could even use a single sequence in all three slots).
Put two vertical bricks on the even residue classes and one horizontal brick on the middle class:
\begin{equation}
	\label{equation:definition of T}
	T = \Psi\big(V_a,\ H_b,\ V_c\big).
\end{equation}
By Lemma~\ref{lemma:bricks-basic}(1) each ingredient brick is a $Q$-tiling, so by Proposition~\ref{proposition:dictionary} $T$ is an $F$-tiling.
Also, it is not $1$-periodic.
Suppose $v=(m,n)$ is a period of $T$, and write $n=2n'+\delta$ with $\delta\in\set{0,1}$ and $m=3m'+s$ with $s\in\set{0,1,2}$, so that $v=m'(3,0)+n'(0,2)+s(1,0)+\delta(0,1)$.
Lemma~\ref{lemma:equivariance} tells us how each summand acts on the three ingredient tilings, and we use this to force $v=0$ in three steps.
\begin{enumerate}[i)]
	\item \emph{We show that $n$ is even, that is, $\delta=0$.}
		Every anchor of $T$ has even second coordinate, since $T$ lies in the image of $\Psi$, which consists of the subsets of $\Z^2$ with even second coordinate (Lemma~\ref{lemma:Psi injective}).
		The anchors of $v\cdot T$ then have second coordinate $2j+n$. Were $n$ odd, these would all be odd and hence disjoint from the anchors of $T$, so $v\cdot T\neq T$.
	\item \emph{We show that $s=0$.}
		Since $n$ is even, the anchors of $v\cdot T$ are again even, so $v\cdot T$ lies in the image of $\Psi$ (Lemma~\ref{lemma:Psi injective}).
		Write $v\cdot T=\Psi(\sigma_0,\sigma_1,\sigma_2)$.
		By the equivariance identities \eqref{equation:equivariance vertical}--\eqref{equation:equivariance permutation}, each $\sigma_d$ is a translate of one of $V_a,H_b,V_c$, and the cyclic shift $d\mapsto d+s$ carries the orientation pattern $(\text{vertical},\text{horizontal},\text{vertical})$ of $T$ to that of $(\sigma_0,\sigma_1,\sigma_2)$. 
		Using $v\cdot T=T$, the injectivity of $\Psi$ (Lemma~\ref{lemma:Psi injective}) forces $\sigma_d$ to equal the $d$-th ingredient brick of $T$ for every $d$, so the two orientation patterns coincide.
		But $(\text{vertical},\text{horizontal},\text{vertical})$ has its only horizontal entry in residue $1$, and a cyclic shift fixes it only when $s=0$.
		(A translate of a vertical brick is never the horizontal brick $H_b$: their period groups, Lemma~\ref{lemma:bricks-sturmian}(1), are the transverse lines $\set{0}\times2\Z$ and $2\Z\times\set{0}$.)
		Hence $s=0$.
	\item \emph{We show that $v=0$.} With $\delta=s=0$ the shift $v=m'(3,0)+n'(0,2)$ permutes nothing and moves every ingredient brick by the same vector $(m',n')$, by \eqref{equation:equivariance vertical}--\eqref{equation:equivariance horizontal}.
		So $v\cdot T=T$ forces $(m',n')$ to be a common period of $V_a$ and $H_b$.
		By Lemma~\ref{lemma:bricks-sturmian}(1) these period groups are $\set{0}\times2\Z$ and $2\Z\times\set{0}$, which meet only in $0$. Therefore $(m',n')=0$ and $v=0$.
\end{enumerate}
\noindent
The point of Theorem~\ref{theorem:main counterexample} is the stronger assertion that not even a \emph{limit} of translates of $T$ is periodic.
The proof rests on a single structural observation: the brick \emph{orientations} survive passage to the
orbit closure.

\begin{lemma}[Orientation is preserved in the orbit closure]
	\label{lemma:orientation preserved}
	Recall $T=\Psi(V_a,H_b,V_c)$ from \eqref{equation:definition of T}, the brick wall with two vertical Sturmian layers (residues $0,2$ modulo 3) and one horizontal Sturmian layer (residue $1$ modulo 3).
	Every tiling $U\in\overline{\Z^2\cdot T}$ can be written, after possibly translating by $(0,1)$, as $U=\Psi(\sigma_0,\sigma_1,\sigma_2)$ where exactly two of $\sigma_0,\sigma_1,\sigma_2$ are vertical Sturmian bricks and the third is a horizontal Sturmian brick.
	In particular each $\sigma_d$ has the period group given by Lemma~\ref{lemma:bricks-sturmian}(2).
\end{lemma}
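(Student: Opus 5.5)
Since $\Z^2$ is generated by $(3,0)$, $(0,2)$, $(1,0)$ and $(0,1)$, every translate $w\cdot T$ can be written as $w\cdot T=\delta(0,1)\cdot w'\cdot T$ with $\delta\in\set{0,1}$ and $w'\in\Z\times 2\Z$. By Lemma~\ref{lemma:equivariance}, $w'\cdot T=\Psi(\sigma_0,\sigma_1,\sigma_2)$, where $(\sigma_0,\sigma_1,\sigma_2)$ is a cyclic rearrangement of translates of $(V_a,H_b,V_c)$. The permutation in \eqref{equation:equivariance permutation} only cycles the slots and adds a $(1,0)$-translate to one entry; the other two identities translate all entries by the same vector. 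Hence, on the orbit itself, each $\sigma_d$ is a translate of $V_a$, $H_b$ or $V_c$. Exactly one of them is horizontal, and the horizontal slot is determined by the residue of the first coordinate of $w'$ modulo $3$.

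To pass to the closure, take $U=\lim_k w_k\cdot T$. I pass to a subsequence along which $\delta_k=\delta$ is constant and the horizontal slot index $h$ is constant. Then $(0,1)^{-\delta}\cdot U$ is the limit of $w_k'\cdot T$, and each such set has all its anchors in $\Z\times 2\Z$. This is a closed condition, so the limit also has all anchors in $\Z\times2\Z$. By Lemma~\ref{lemma:Psi injective}, the limit therefore equals $\Psi(\sigma_0,\sigma_1,\sigma_2)$ for some sets $\sigma_d$.

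The key point is continuity of the recovery map. By \eqref{equation:Psi recovery}, each $\sigma_d$ is the pullback of the configuration along the injective map $(i,j)\mapsto(3i+d,2j)$, and this pullback is continuous $\set{0,1}^{\Z^2}\to\set{0,1}^{\Z^2}$. Thus $\sigma_d=\lim_k \sigma_d^{(k)}$, where $\sigma_d^{(k)}$ are the ingredients of $w_k'\cdot T$.

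For $d\neq h$, every $\sigma_d^{(k)}$ is a translate of $V_a$ or of $V_c$. Passing to a further subsequence, I may assume it is always a translate of the same one. Then $\sigma_d$ lies in $\overline{\Z^2\cdot V_a}$ or in $\overline{\Z^2\cdot V_c}$. Lemma~\ref{lemma:bricks-sturmian}(2) then shows that $\sigma_d$ is a vertical brick with Sturmian phase and period group $\set{0}\times2\Z$. Similarly, $\sigma_h\in\overline{\Z^2\cdot H_b}$ is a horizontal Sturmian brick with period group $2\Z\times\set{0}$. A vertical brick and a horizontal brick have different period groups, so they are distinct. Hence exactly two of the $\sigma_d$ are vertical and one is horizontal.

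The main obstacle is bookkeeping rather than substance: the parity $\delta$ and the horizontal slot can vary along the sequence $w_k$, and this is handled by the pigeonhole passage to subsequences. The rest is verifying that the limit stays inside the image of $\Psi$ (the closedness of the even-row condition) and that the recovery map \eqref{equation:Psi recovery} is continuous. Both hold because every output coordinate is a single input coordinate, exactly as for the slice maps $\psi^e_m$.
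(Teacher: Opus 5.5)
Your proof is correct and follows essentially the same route as the paper: you pass to a subsequence fixing the vertical parity and the residue of the horizontal shift modulo $3$, observe that the limit stays in the image of $\Psi$, and combine the continuity of the recovery map \eqref{equation:Psi recovery} with Lemma~\ref{lemma:bricks-sturmian}(2). Two small remarks: your second subsequence is redundant, because fixing the horizontal slot already fixes which ingredient occupies each residue class; and translating by $(0,-1)$ rather than $(0,1)$ is immaterial, by the $(0,2)$-equivariance \eqref{equation:equivariance vertical}.
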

\begin{proof}
	Let $U=\lim_{n} w_n\cdot T$ for some sequence $(w_n)_{n\geq1}$ in $\Z^2$.
	Say $w_n=(p_n,q_n)$ for each $n$.
	Passing to a subsequence, we may assume $p_n\bmod 3$ is constant, equal to some $r\in\set{0,1,2}$, and $q_n\bmod 2$ is constant.
	If that parity is odd, replace each $w_n$ by $w_n+(0,1)$.
	This only translates the limit, replacing $U$ by $(0,1)\cdot U$, so we may assume every $q_n$ is even.
	Then every anchor of every $w_n\cdot T$ has even second coordinate (Lemma~\ref{lemma:equivariance}), and hence so does $U$.
	Thus $U$ lies in the image of $\Psi$, so by Lemma~\ref{lemma:Psi injective} it equals $\Psi(\sigma_0,\sigma_1,\sigma_2)$ for a unique triple $(\sigma_0,\sigma_1,\sigma_2)$.
	Since $U$ is an $F$-tiling, each $\sigma_d$ is a $Q$-tiling by Proposition~\ref{proposition:dictionary}.

	It remains to identify each $\sigma_d$. We first compute the residue classes of the approximants $w_n\cdot T$, and then pass to the limit.
	Fix $n$.
	Since $q_n$ is even, $w_n\cdot T$ has only even-height anchors and so lies in the image of $\Psi$ (Lemma~\ref{lemma:equivariance}).
	Write
	\[
		w_n\cdot T=\Psi\big(\sigma^{(n)}_0,\sigma^{(n)}_1,\sigma^{(n)}_2\big).
	\]
	What we must determine is \emph{which} ingredient brick of $T$ each $\sigma^{(n)}_d$ is a translate of. For this, decompose
	\[
		w_n=k(3,0)+\ell(0,2)+r(1,0),
		\qquad p_n=3k+r,\ \ q_n=2\ell
	\]
	(legitimate since $q_n$ is even and $r=p_n\bmod3$).
	Here $k,\ell\in\Z$ depend on the fixed $n$, whereas $r$ is the residue fixed once and for all on the subsequence. We read off each summand from the equivariance lemma.
	By \eqref{equation:equivariance vertical}--\eqref{equation:equivariance horizontal}, the summands $k(3,0)$ and $\ell(0,2)$ translate all three ingredient bricks by $(k,\ell)$ and leave their residue labels in place.
	The remaining summand is the shift $(r,0)$, that is, the unit shift $(1,0)$ applied $r$ times.
	By \eqref{equation:equivariance permutation} a single $(1,0)$ moves the ingredient brick sitting in residue class $d-1$ into residue class $d$ (translating the class that wraps around from $2$ to $0$ by $(1,0)$).
	Applying $(1,0)$ a total of $r$ times therefore lands the ingredient that started in residue $d-r$ in residue $d$ (indices modulo $3$).
	Since the ingredients of $T$ are $V_a,H_b,V_c$ in residues $0,1,2$, we conclude that $\sigma^{(n)}_d$ is a translate of the residue-$(d-r)$ ingredient of $T$.
	Writing $B_0=V_a$, $B_1=H_b$, $B_2=V_c$, this says $\sigma^{(n)}_d=B_{d-r}+t_{d,n}$ for some $t_{d,n}\in\Z^2$. In particular $\sigma^{(n)}_d\in\Z^2\cdot B_{d-r}$.

	Now pass to the limit.
	For tilings in the image of $\Psi$, the recovery formula \eqref{equation:Psi recovery} exhibits the $d$-th residue class as a \emph{continuous} function of the tiling, since each of its coordinates is a single coordinate of the underlying set.
	Both $U$ and every $w_n\cdot T$ lie in the image of $\Psi$, and $w_n\cdot T\to U$ as $n\to \infty$.
	Applying this continuous map along the sequence gives $\sigma_d=\lim_n\sigma^{(n)}_d$ for each $d$.
	Thus $\sigma_d=\lim_n\sigma^{(n)}_d$ is a limit of elements of $\Z^2\cdot B_{d-r}$, hence lies in $\overline{\Z^2\cdot B_{d-r}}$. By Lemma~\ref{lemma:bricks-sturmian}(2) it is a vertical Sturmian brick when $B_{d-r}$ is $V_a$ or $V_c$, and a horizontal Sturmian brick when it is $H_b$.
	Since $d\mapsto d-r$ is a bijection of $\set{0,1,2}$, the multiset of orientations of $(\sigma_0,\sigma_1,\sigma_2)$ is the same as that of $(V_a,H_b,V_c)$: two vertical and one horizontal.
\end{proof}

\begin{proof}[Proof of Theorem~\ref{theorem:main counterexample}]
	The cluster $F$ of \eqref{equation:definition of F} is exact with full affine span
	(Section~\ref{point:the cluster F}), and $T$ of \eqref{equation:definition of T} is an $F$-tiling. It
	remains to show $\overline{\Z^2\cdot T}$ contains no $1$-periodic $F$-tiling.

	Suppose, for contradiction, that some $U\in\overline{\Z^2\cdot T}$ has a nonzero period $w=(p,q)$.
	A period is unchanged by translating $U$, so by Lemma~\ref{lemma:orientation preserved} we may take $U=\Psi(\sigma_0,\sigma_1,\sigma_2)$ with orientation multiset $\set{V,V,H}$.
	Without loss of generality, we may assume that the horizontal brick sits in residue class $d_0$, and the other two classes are vertical.

	Note that $q$ is even.
	Indeed $w\cdot U=U$ and $U$ has only even-height anchors. If $q$ were odd then $w\cdot U$ would have only odd-height anchors (Lemma~\ref{lemma:equivariance}, parity clause), contradicting $w\cdot U=U$. Write $q=2q'$.
	We make cases based on the residue of $p$ modulo $3$.
	\begin{enumerate}[(1)]
		\item \emph{$p\not\equiv 0\pmod 3$.} By \eqref{equation:equivariance permutation} the shift by $(1,0)$
		      cyclically permutes the three residue classes, so $w=(p,2q')$ permutes them by $p\bmod 3$, a
		      nontrivial cyclic permutation. Applying $w$ to $U$ therefore moves the orientation of class $d_0$
		      to class $d_0+p\not\equiv d_0$. But $U$ has a \emph{unique} horizontal class, so $w\cdot U$ has its
		      horizontal class in a different position than $U$ does. In particular $w\cdot U\neq U$. This
		      contradicts $w\cdot U=U$.

		\item \emph{$p\equiv 0\pmod 3$.} Write $p=3p'$. By
		      \eqref{equation:equivariance vertical}--\eqref{equation:equivariance horizontal} the shift
		      $w=(3p',2q')$ fixes each residue class and acts within it as the square-tiling shift by
		      $(p',q')$. Thus $w\cdot U=U$ forces $(p',q')$ to be a period of every $\sigma_d$. But two of the
		      $\sigma_d$ are vertical bricks, with period group $\set{0}\times 2\Z$, and one is a horizontal
		      brick, with period group $2\Z\times\set{0}$ (Lemma~\ref{lemma:bricks-sturmian}(2)). Hence
		      \[
		      	(p',q')\in\big(\set{0}\times 2\Z\big)\cap\big(2\Z\times\set{0}\big)=\set{(0,0)},
		      \]
		      so $p'=q'=0$ and therefore $w=(3p',2q')=(0,0)$, contradicting $w\neq 0$.
	\end{enumerate}
	Both cases are impossible, so $\overline{\Z^2\cdot T}$ contains no $1$-periodic $F$-tiling. This proves
	Theorem~\ref{theorem:main counterexample}, and Conjecture~\ref{conjecture:orbit closure one periodicity}
	is false.
\end{proof}

\begin{remark}
	The vertical brick $V_a$ is exactly the tiling of \cite[\S1.3, eq.~(1)]{greenfeld_tao_2020} by the square $\set{0,1}^2$ with arbitrary phase $a$.
	It is always periodic along $(0,2)$, but biperiodic only when $a$ is periodic.
	An orbit closure with no periodic point at all is the kind of object studied in \cite{grandjean_menibus_vanier_2018}.
\end{remark}


\section{Clusters of Prime-Squared Cardinality}
\label{section:prime squared}

The counterexample of Section~\ref{section:counterexample} is a cluster of eight cells whose orbit closure, for a suitable tiling, contains no $1$-periodic $F$-tiling.
How small can a cluster be before this happens?
It was shown in \cite{szegedy_algorithms_to_tile} that if a cluster has \emph{prime} cardinality then every tiling is $1$-periodic.
Thus no cluster with $\abs F$ prime can exhibit the phenomenon.
The next case is $\abs F=p^2$, and this section disposes of it by showing that for every exact cluster of full affine span with $\abs F=p^2$, the orbit closure of any $F$-tiling contains a $1$-periodic $F$-tiling (Theorem~\ref{theorem:p squared one periodic}), so Conjecture~\ref{conjecture:orbit closure one periodicity} holds in this case.

Together with the prime case, this rules out every cardinality below $8$ except $\abs F=6$: the primes $2,3,5,7$ by Szegedy, and $4=2^2$ by the present theorem.
The smallest cardinality at which the orbit closure can fail to contain a $1$-periodic $F$-tiling is therefore either $6$ or $8$.
The remaining case $\abs F=6$ is discussed in \S\ref{subsection:threshold open} below, where we leave it open. The eight-cell counterexample, in which $8=2^3$, also shows that
Theorem~\ref{theorem:p squared one periodic} cannot be pushed from $p^2$ up to $p^3$.
The proof is analytic, following Bhattacharya's ergodic-theoretic proof of the
periodic tiling conjecture \cite{bhattacharya_tilings}.\footnote{References to \cite{bhattacharya_tilings} are to the arXiv version.}

\subsection{Arithmetic preparations}

We need some preparation.
Call $\gamma\in \mathbb S^1$ \define{irrational} if it is not a
root of unity. Call $\gamma_1,\dots,\gamma_m\in \mathbb S^1$ \define{rationally independent} if no nontrivial
character of $(\mathbb S^1)^m$ kills $(\gamma_1,\dots,\gamma_m)$.

\smallskip\noindent
The following lemma appears in \cite[Lemma 3.5]{khetan_prime_squared} and \cite[Lemma~5.6]{khetan_order2}.
We include a proof for completeness.

\begin{lemma}[A vanishing criterion via rational independence]
	\label{lemma:rational independence vanishing}
	Let $\gamma_1,\dots,\gamma_n\in \mathbb S^1$ be irrational and $x_1,\dots,x_n\in\C$. If
	\[
		(\gamma_1^k-1)x_1+\cdots+(\gamma_n^k-1)x_n\in\Z\qquad\text{for all integers } k\ge 0,
	\]
	then this expression equals $0$ for every $k$.
\end{lemma}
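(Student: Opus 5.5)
We prove Lemma~\ref{lemma:rational independence vanishing} by averaging the integer-valued expression over $k$ and then using the fact that an integer sequence can only converge by eventually stabilizing. Write
\[
	S_k=\sum_{i=1}^n(\gamma_i^k-1)x_i=\sum_{i=1}^n\gamma_i^kx_i-c,\qquad c:=\sum_{i=1}^n x_i,
\]
so $S_k\in\Z$ for all $k\ge0$. Note that $S_0=0$ automatically. The first step is to show that $S_k$ takes only finitely many values and is, in a suitable sense, recurrent.

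The sequence is bounded, since $\abs{S_k}\le 2\sum_i\abs{x_i}$, so $S_k$ lies in a finite set of integers. Consider the vector $g=(\gamma_1,\dots,\gamma_n)$ in the compact group $(\mathbb S^1)^n$. Its powers $g^k$ return arbitrarily close to the identity along a set of $k$ of positive upper density; this follows from the recurrence of the rotation by $g$ on the closure of $\set{g^k}$. Along such $k$ we have $S_k\to S_0=0$. Since $S_k$ is integer-valued, $S_k=0$ for those $k$. This alone does not finish the argument, so we switch to averaging.

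The main step uses Cesàro means. Since each $\gamma_i$ is irrational, $\frac1N\sum_{k<N}\gamma_i^k\to0$, so $\frac1N\sum_{k<N}S_k\to -c$. More generally, for each integer value $m$ we look at the density of $\set{k: S_k=m}$. The cleaner route is to use that $k\mapsto S_k+c$ is a continuous function $\phi$ evaluated along the orbit $g^k$, where
\[
	\phi(z)=\sum_i z_ix_i
\]
on the orbit closure $H=\overline{\set{g^k}}$. The set $H$ is a closed subgroup, hence a finite union of cosets of a torus $H_0$, and $\set{g^k}$ is dense in $H$. Because $\phi-c$ takes integer values on a dense set and is continuous, it is locally constant on $H$, so it is constant on each coset of $H_0$.

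We then show that $\phi$ is constant on all of $H$. This is the main obstacle: it requires handling the finitely many components. Let $q$ be the index $[H:H_0]$. Then $g^q\in H_0$, and $\phi$ constant on $H_0$ means that for every character direction of $H_0$ the Fourier expansion $\sum_i x_iz_i$ restricted to $H_0$ has no nonconstant part. The functions $z\mapsto z_i$ are characters of $H$. A character restricted to $H_0$ is constant only if it is trivial on $H_0$, i.e.\ has finite order on $H$, which would make $\gamma_i$ a root of unity. So no $z_i$ is trivial on $H_0$, and grouping the $i$ by equal restriction shows that the coefficient sums $\sum_{i:\,z_i|_{H_0}=\chi}x_i z_i$ vanish identically on each coset for every nontrivial $\chi$. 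Every $\chi$ that occurs is nontrivial, so $\phi\equiv0$ on each coset of $H_0$, hence on $H$. Then $\sum_i\gamma_i^kx_i=0$ for all $k$, and taking $k=0$ gives $c=0$, so $S_k=0$ for every $k$.

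The step I expect to need the most care is the character-grouping argument on cosets of $H_0$. Specifically, one must check that the restrictions to $H_0$ of characters that differ on $H$ can coincide, and that the grouping still forces each grouped sum to vanish; linear independence of distinct characters of $H_0$ handles this.
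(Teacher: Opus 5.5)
Your argument is correct and is essentially the paper's proof in coordinate-free form. The identity component $H_0$ of the orbit closure and its finitely many cosets play the role of the paper's torus $(\mathbb S^1)^d$ (built from a basis $\beta_1,\dots,\beta_d$ of the free part of $\langle\gamma_1,\dots,\gamma_n\rangle$) and of its splitting of $k$ into residue classes $k\equiv\rho\pmod q$. Your observation that no $z_i$ is trivial on $H_0$ is exactly the paper's $c_i\neq0$, and both proofs then finish by connectedness and linear independence of characters.

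The recurrence and Ces\`aro paragraphs are never used and can be deleted. One point should be stated explicitly: integrality is only assumed for $k\ge0$, so you need the closure of the forward orbit $\set{g^k:k\ge0}$ to already be a subgroup. This follows from the recurrence $g^{k_j}\to 1$ you mention, since then $g^{k_j-m}\to g^{-m}$ for every $m\ge1$.
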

\begin{proof}
	Write $S_k=\sum_{i=1}^{n}(\gamma_i^k-1)x_i$.
	We must show $S_k=0$ for every $k\ge 0$.
	Let $G=\ab{\gamma_1,\dots,\gamma_n}$ be the subgroup of $\mathbb S^1$ they generate.
	It is finitely generated and abelian, and its torsion subgroup $G_{\mathrm{tor}}$ is finite.
	Being a finite subgroup of $\mathbb S^1$ it is cyclic, so $G_{\mathrm{tor}}=\ab{\zeta}$ for a primitive $q$-th root of unity $\zeta$ (with $q=1$ if $G_{\mathrm{tor}}$ is trivial).
	The quotient $G/G_{\mathrm{tor}}$ is finitely generated and torsion-free, hence free.
	Choosing $\beta_1,\dots,\beta_d\in G$ whose images form a basis of $G/G_{\mathrm{tor}}$, we can write every element of $G$, in particular each $\gamma_i$, uniquely as
	\[
		\gamma_i=\zeta^{c_{i0}}\beta_1^{c_{i1}}\cdots\beta_d^{c_{id}},\qquad c_{ij}\in\Z .
	\]
	The $\beta_1,\dots,\beta_d$ are rationally independent: a relation $\beta_1^{a_1}\cdots\beta_d^{a_d}=1$ would, since they form a basis of $G/G_{\mathrm{tor}}$, force $a_1=\dots=a_d=0$.
	Set $c_i=(c_{i1},\dots,c_{id})\in\Z^d$.
	Each $\gamma_i$ is irrational, hence not a root of unity, so $c_i\neq 0$ for every $i$ (otherwise $\gamma_i=\zeta^{c_{i0}}\in G_{\mathrm{tor}}$).

	Fix a residue $\rho\in\set{0,1,\dots,q-1}$ and define a Laurent polynomial on $(\mathbb S^1)^d$ by
	\[
		f_\rho(U_1,\dots,U_d)
		=\sum_{i=1}^{n}\lrp{\zeta^{\rho c_{i0}}\beta_1^{\rho c_{i1}}\cdots\beta_d^{\rho c_{id}}\,
		U_1^{c_{i1}}\cdots U_d^{c_{id}}-1}x_i .
	\]
	Put $\delta_j=\beta_j^{\,q}$. The $\delta_j$ are again rationally independent, since
	$\delta_1^{a_1}\cdots\delta_d^{a_d}=1$ gives $\beta_1^{qa_1}\cdots\beta_d^{qa_d}=1$, whence
	$qa_j=0$ and so $a_j=0$ for all $j$. For $k=\rho+qt$ with $t\ge 0$ we have
	$\zeta^{kc_{i0}}=\zeta^{\rho c_{i0}}$ and $\beta_j^{kc_{ij}}=\beta_j^{\rho c_{ij}}\delta_j^{tc_{ij}}$,
	so that
	\[
		f_\rho(\delta_1^{t},\dots,\delta_d^{t})=S_{\rho+qt}\in\Z\qquad\text{for all }t\ge 0,
	\]
	the integrality coming from the hypothesis.
	By rational independence of $\delta_1,\dots,\delta_d$, the set $\set{(\delta_1^{t},\dots,\delta_d^{t}):t\ge 0}$ is dense in $(\mathbb S^1)^d$ \cite[Theorem~4.14]{einsiedler_ward_ergodic_theory}.
	As $f_\rho$ is continuous and integer-valued on this dense set, and $(\mathbb S^1)^d$ is connected while $\Z$ is discrete, $f_\rho$ is constant.
	Denote its value by $a_\rho$. Evaluating at $t=0$ gives $a_\rho=S_\rho$.

	It remains to show $a_\rho=0$ for every $\rho$.
	Distinct characters of $(\mathbb S^1)^d$ are linearly independent, so a constant Laurent polynomial equals its constant coefficient.
	Since every $c_i\neq 0$, each monomial $U_1^{c_{i1}}\cdots U_d^{c_{id}}$ is nonconstant, so the only constant contribution to $f_\rho$ comes from the terms $-x_i$.
	Hence the constant coefficient of $f_\rho$ is $-\sum_{i=1}^{n}x_i$.
	For $\rho=0$ the polynomial is $f_0(U)=\sum_i(U_1^{c_{i1}}\cdots U_d^{c_{id}}-1)x_i$, which is constant with value $f_0(1,\dots,1)=0$, so its constant coefficient vanishes, giving $\sum_{i=1}^{n}x_i=0$.
	Therefore the constant coefficient $-\sum_i x_i$ of every $f_\rho$ is $0$, so $a_\rho=0$ for all $\rho$.

	Finally, for any $k\ge 0$, writing $k=\rho+qt$ yields $S_k=f_\rho(\delta_1^{t},\dots,\delta_d^{t})
	=a_\rho=0$, as claimed.
\end{proof}

The second describes the kernel of a character explicitly, as a finite union of parallel circles.

\begin{lemma}[Kernel of a character]
	\label{lemma:kernel of character}
	Let $h=(a,b)$ be a primitive vector in $\Z^2$ and $n\ge 1$. Writing $h^{\perp}=(-b,a)$,
	\[
		\ker\chi_{nh}
		=\lrset{\frac{j}{n(a^2+b^2)}\,h + th^{\perp}\bmod\Z^2 \ :\ j\in\Z,\ t\in\R}.
	\]
\end{lemma}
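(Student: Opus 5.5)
The plan is to verify the two inclusions directly, working with representatives in $\R^2$ of points of $\T^2=\R^2/\Z^2$. Throughout I take $\chi_m(x)=e^{2\pi i\,\ab{m,x}}$ for $m\in\Z^2$, which is well defined on $\T^2$ because $\ab{m,z}\in\Z$ for $z\in\Z^2$. With this convention, a point $x\bmod\Z^2$ lies in $\ker\chi_{nh}$ exactly when $n\ab{h,x}\in\Z$ for one, equivalently every, representative $x\in\R^2$.

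The key observation is that $h$ and $h^{\perp}$ are orthogonal and both have squared length $a^2+b^2>0$. They therefore form an orthogonal basis of $\R^2$, and every $x\in\R^2$ decomposes uniquely as
\[
	x=s\,h+t\,h^{\perp},\qquad s=\frac{\ab{h,x}}{a^2+b^2},\quad t=\frac{\ab{h^{\perp},x}}{a^2+b^2}.
\]

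For the inclusion ``$\supseteq$'', take a point $\frac{j}{n(a^2+b^2)}h+th^{\perp}$. Since $\ab{h,h^\perp}=0$ and $\ab{h,h}=a^2+b^2$, pairing it with $nh$ gives exactly $j\in\Z$. Hence $\chi_{nh}$ takes the value $1$ there, and this persists after reducing modulo $\Z^2$.

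For the inclusion ``$\subseteq$'', pick any representative $x\in\R^2$ of a kernel element, so that $k:=n\ab{h,x}\in\Z$. The decomposition above then gives $s=\frac{k}{n(a^2+b^2)}$, so $x=\frac{k}{n(a^2+b^2)}h+th^{\perp}$ with $j=k$ and the real number $t$ read off from the decomposition. Reducing modulo $\Z^2$ puts the point in the right-hand side. No choice of representative matters, because the condition $n\ab{h,x}\in\Z$ is invariant under adding elements of $\Z^2$.

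There is no real obstacle here. The only points needing care are fixing the character convention and noting that membership is independent of the lift. The argument does not use primitivity of $h$. Primitivity presumably matters later, when this set is interpreted as a union of finitely many parallel closed circles: for primitive $h$ the line $\R h^{\perp}$ closes up in $\T^2$ after length $\abs{h}$. That interpretation is beyond what the displayed statement asserts.
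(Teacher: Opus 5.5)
Your proof is correct and follows the paper's argument: write $x=sh+th^{\perp}$ in the orthogonal basis, observe $\langle h,x\rangle=s(a^2+b^2)$, and read off $s=j/(n(a^2+b^2))$ from the condition $n\langle h,x\rangle\in\Z$. Your explicit two inclusions and your check that the condition does not depend on the choice of lift spell out what the paper compresses into ``reducing modulo $\Z^2$''; you are also right that primitivity of $h$ plays no role in this step.
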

\begin{proof}
	Every $x\in\R^2$ is uniquely $x=sh+th^{\perp}$ with $s,t\in\R$, and then $\langle h,x\rangle=s(a^2+b^2)$ because $h\perp h^{\perp}$.
	Now $\chi_{nh}(x)=e^{2\pi i n\langle h,x\rangle}=1$ if and only if $n\langle h,x\rangle\in\Z$.
	Thus $s(a^2+b^2)=j/n$ for some $j\in\Z$, that is, $s=j/(n(a^2+b^2))$.
	Reducing modulo $\Z^2$ gives the stated description (only finitely many values of $j$ give distinct circles).
\end{proof}

\subsection{Combinatorial Preparation}
\label{subsection:combinatorial preparation}
This subsection collects the facts about how finite sets tile $\Z$ and $\Z^2$ that the rest of the section will use.
Only two of them are needed later (Lemma~\ref{lemma:product structure} and Proposition~\ref{proposition:product periodicity}), both in the proof of Theorem~\ref{theorem:p squared one periodic}.

The arguments below are phrased in terms of \define{mask polynomials}.
We define them here.
To a finite set $E\subseteq\Z^d$ we attach the Laurent polynomial
\[
	E(\mathbf x)=\sum_{e\in E}\mathbf x^{e},\qquad \mathbf x^{e}=x_1^{e_1}\cdots x_d^{e_d},
\]
in the variables $x_1,\dots,x_d$, and call it the \define{mask polynomial} of $E$.
We write the same symbol for the set and for its mask polynomial.
In this paper $d=1$ or $d=2$, so the mask polynomial of $E\subseteq\Z$ is $E(x)=\sum_{e\in E}x^{e}$ and that of $E\subseteq\Z^2$ is $E(x,y)=\sum_{(i,j)\in E}x^{i}y^{j}$.

What makes the device useful is the dictionary between set operations and algebra.
A translate $E+v$ has mask polynomial $\mathbf x^{v}E(\mathbf x)$, and a disjoint union $E\sqcup E'$ has mask polynomial $E(\mathbf x)+E'(\mathbf x)$, so conversely any Laurent polynomial with coefficients in $\set{0,1}$ is the mask polynomial of the set of its exponents.
For an infinite set the same expression is a formal Laurent series.
In particular $\one(x)=\sum_{i\in\Z}x^{i}$ is the series of all of $\Z$, and a tiling $E\oplus T=\Z$ is recorded by the identity $E(x)\,T(x)=\one(x)$, a product of a polynomial with a formal series that is well defined coefficientwise.

The following periodicity statement for prime-cardinality tiles of $\Z$ is due to Szegedy
\cite{szegedy_algorithms_to_tile}, originally by a combinatorial argument.
The short algebraic proof we give using the Frobenius identity (the ``freshman's dream'') is that of Horak and Kim \cite[Theorem~16]{horak_kim_algebraic_method} and, independently, of Kari and Szabados \cite[Example~4]{kari_szabados_alg_geom}.
The density clause that turns periodicity into the coset count is elementary.

\begin{lemma}[Periodicity of prime-cardinality tiles of $\Z$]
	\label{lemma:prime tile periodicity}
	Let $p$ be prime and $A\subseteq\Z$ with $\abs A=p$, and suppose $A\oplus T=\Z$.
	Then $T$ is invariant under $p(a-a')$ for all $a,a'\in A$.
	Equivalently, writing $d=\gcd(A-A)$, $T$ is invariant under $p\langle A-A\rangle=pd\,\Z$.
	Consequently $T$ is a union of cosets of $pd\,\Z$, and since $T$ has density $1/p$ it is the union of exactly $d$ of them.
\end{lemma}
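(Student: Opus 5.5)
The plan is to use the mask-polynomial formalism over $\mathbb F_p$, following the Frobenius argument the text attributes to Horak--Kim and Kari--Szabados. First I will translate by an element of $A$ so that $0\in A$; this changes neither $A-A$ nor the tiling property up to translating $T$. The tiling $A\oplus T=\Z$ is then the identity $A(x)\,T(x)=\one(x)$ of formal Laurent series, with $A(x)$ a Laurent polynomial with $0/1$ coefficients. Reducing coefficients modulo $p$ turns this into an identity in $\mathbb F_p[[x^{\pm1}]]$, with the product still well defined coefficientwise because $A$ is finite. Multiplying a bi-infinite series by a Laurent polynomial is associative and commutative when the polynomial factors are finite, and that is the only kind of product I will use.

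The key step is the freshman's dream in characteristic $p$:
\[
	A(x)^p=\sum_{a\in A}x^{pa}\pmod p .
\]
Next I multiply the tiling identity by $A(x)^{p-1}$, which gives $A(x)^pT(x)=A(x)^{p-1}\one(x)$. Since $x^k\one(x)=\one(x)$ for every $k$, the right side is $A(1)^{p-1}\one(x)=p^{p-1}\one(x)\equiv 0 \pmod p$. Hence $\sum_{a\in A}x^{pa}T(x)\equiv 0$. In words, for every $n\in\Z$ the number of $a\in A$ with $n-pa\in T$ is divisible by $p$. That number lies between $0$ and $p$, so for each $n$ either none or all of the points $n-pa$, $a\in A$, lie in $T$. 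Taking $n=t+pa'$ for $t\in T$ and $a'\in A$ makes one of these points equal to $t\in T$, so all of them lie in $T$. This gives $t+p(a'-a)\in T$ for all $a,a'\in A$. Applying the same argument with $t$ replaced by $t-p(a'-a)$ gives the reverse inclusion, so $T$ is invariant under $p(a-a')$.

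The remaining statements follow by elementary steps. The invariance group of $T$ is a subgroup of $\Z$ containing all $p(a-a')$, so it contains $p\langle A-A\rangle=pd\Z$, where $d=\gcd(A-A)$. Hence $T$ is a union of cosets of $pd\Z$. For the count, the tiling $A\oplus T=\Z$ forces $T$ to have density $1/|A|=1/p$. One way to see this is to count over a long interval $[-N,N]$, where each point is covered exactly once and boundary effects are $O(1)$. Alternatively, since $T$ is $pd$-periodic, one can count exactly over a period: $|A|\cdot|T\cap[0,pd)|=pd$. Each coset of $pd\Z$ has density $1/(pd)$, so the number $k$ of cosets satisfies $k/(pd)=1/p$, giving $k=d$.

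I expect the main obstacle to be justifying the formal manipulations with bi-infinite series. Products of two infinite series are not defined in general, so I will be careful to multiply $T(x)$ and $\one(x)$ only by genuine Laurent polynomials, namely $A(x)^{p-1}$ and the Frobenius image of $A$. Associativity then reduces to finite sums at each coefficient.
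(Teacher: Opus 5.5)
Your proof is correct and follows the paper's route. The paper also applies the Frobenius identity $A(x)^p\equiv A(x^p)\pmod p$ to the mask-polynomial form $A(x)\,T(x)=\one(x)$ of the tiling, citing Horak--Kim and Kari--Szabados for the details you write out, and then makes the same density count $k/(pd)=1/p$.

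One small wording point. The reverse inclusion $T\subseteq T+p(a'-a)$ is just your already-proved inclusion with $a$ and $a'$ interchanged. As you phrase it, ``replacing $t$ by $t-p(a'-a)$'' would presuppose $t-p(a'-a)\in T$.
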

\begin{proof}
	The periodicity statement that $T$ is invariant under each $p(a-a')$, hence under $p\langle A-A\rangle=pd\,\Z$, is \cite[Theorem~16]{horak_kim_algebraic_method} (independently \cite[Example~4]{kari_szabados_alg_geom}).
	The argument applies the Frobenius identity $Q(x)^p\equiv Q(x^p)\pmod p$ to the mask polynomial $Q(x)=\sum_{a\in A}x^{a}$. Granting this, $T$ is a union of cosets of $pd\,\Z$, and only the count remains.
	Because $A\oplus T=\Z$ with $\abs A=p$, the set $T$ has density $1/p$: in a long interval the $p$ translates $T-a$, $a\in A$, partition it. If $T$ occupies $k$ of the $pd$ residues modulo $pd$, then its density is $k/(pd)$, so $k/(pd)=1/p$, that is, $k=d$. Thus $T$ is the union of exactly $d$ cosets of $pd\,\Z$.
\end{proof}

\begin{corollary}[Rigidity of prime-cardinality tiles]
	\label{corollary:prime crs rigidity}
	Let $A\subseteq\Z$ with $\abs A=p$ prime and $\gcd(A-A)=1$.
	Then every $T\subseteq\Z$ with $A\oplus T=\Z$ is a coset of $p\Z$.
	In particular the set $T$ with $A\oplus T=\Z$ is unique up to translation.
\end{corollary}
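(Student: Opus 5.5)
This is a direct specialization of Lemma~\ref{lemma:prime tile periodicity} to the case $d=\gcd(A-A)=1$.

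Take any $T\subseteq\Z$ with $A\oplus T=\Z$. Lemma~\ref{lemma:prime tile periodicity} shows that $T$ is invariant under $p\langle A-A\rangle=pd\,\Z$. With $d=1$ this group is $p\Z$, so $T$ is a union of cosets of $p\Z$. The density clause of the same lemma says $T$ consists of exactly $d=1$ of these cosets. Hence $T=t+p\Z$ for some $t\in\Z$.

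For the uniqueness statement, note that any two cosets of $p\Z$ are translates of one another. So any two solutions $T,T'$ of $A\oplus T=\Z$ satisfy $T'=T+(t'-t)$.

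Conversely, every coset $t+p\Z$ is indeed a solution. Since $A\oplus T=\Z$ does have a solution, the previous paragraph makes $A$ a complete residue system modulo $p$. It follows that $A\oplus(t+p\Z)=\Z$ for every $t$. This converse is not needed for the statement as written, but it shows that the description of all solutions is exact.

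The only step needing care is confirming that the density count in Lemma~\ref{lemma:prime tile periodicity} applies to an arbitrary $T$ with $A\oplus T=\Z$. That lemma is stated for exactly such $T$, so nothing further is required.
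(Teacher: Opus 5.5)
Your proof is correct and is the paper's own argument: set $d=\gcd(A-A)=1$ in Lemma~\ref{lemma:prime tile periodicity}, so $T$ is $p\Z$-invariant and the density count makes it a single coset. Your closing remark that $A$ is then a complete residue system modulo $p$ matches the paper's parenthetical, though it presupposes that some $T$ with $A\oplus T=\Z$ exists, which the corollary does not assert.
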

\begin{proof}
	Here $d=\gcd(A-A)=1$, so $pd\,\Z=p\Z$ and Lemma~\ref{lemma:prime tile periodicity} makes $T$ the union of exactly $d=1$ coset of $p\Z$, that is, $T=p\Z+r$.
	(No complete-residue hypothesis is needed: once $T=p\Z+r$, the identity $A\oplus(p\Z+r)=\Z$ with $\abs A=p$ forces $A$ to be a complete residue system modulo $p$ automatically.)
\end{proof}

\begin{lemma}[Divisible sections force a product structure]
	\label{lemma:product structure}
	Let $p$ be a prime and $S\subseteq\Z^2$ a set of size $\abs{S}=p^2$ containing the origin.
	Suppose there are two distinct lines $\ell$ and $m$ through the origin such that every line parallel to $\ell$ or to $m$ meets $S$ in a number of points divisible by $p$.
	Then $\ell$ and $m$ are necessarily \emph{rational} lines.\footnote{%
		A line in $\R^2$ is said to be \define{rational} if it passes through two distinct points of $\Z^2$.}
	Let $u_\ell,u_m\in\Z^2$ be the primitive vectors in the directions of $\ell$ and $m$ respectively.
	\begin{enumerate}[(i)]
		\item \emph{Product structure.} The set $S$ is a product in the basis $(u_\ell,u_m)$: there are $A,B\subseteq\Z$ with $\abs A=\abs B=p$ and
		\[
			S=\set{au_\ell+bu_m:\ a\in A,\ b\in B}.
		\]
		\item \emph{Genuine product.} If $u_\ell,u_m$ form a $\Z$-basis of $\Z^2$ \textup(equivalently $\abs{\det(u_\ell,u_m)}=1$\textup), then the unimodular map $g\in\mathrm{GL}_2(\Z)$ sending $u_\ell\mapsto(1,0)$ and $u_m\mapsto(0,1)$ is an automorphism of $\Z^2$ that carries $S$ onto the product $A\times B$, that is, $g(S)=A\times B$.
		\item \emph{Full affine span.} If $S$ has full affine span, i.e.\ $\langle S-S\rangle=\Z^2$, then $u_\ell,u_m$ automatically form a $\Z$-basis \textup($\abs{\det(u_\ell,u_m)}=1$\textup) and $\gcd(A-A)=\gcd(B-B)=1$. Consequently \textup{(ii)} applies, and $g$ carries $S$ onto the product $A\times B$ with $A,B$ each of full span $\gcd(A-A)=\gcd(B-B)=1$.
	\end{enumerate}
\end{lemma}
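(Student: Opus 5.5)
The plan is a counting argument on the two pencils of parallel lines, followed by linear algebra for parts (ii) and (iii).

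\emph{Rationality.} Suppose $\ell$ is not rational. Then any line parallel to $\ell$ contains at most one point of $\Z^2$: two lattice points on it would have a nonzero integer difference parallel to $\ell$, making $\ell$ rational. The line $\ell$ itself passes through the origin, which lies in $S$, so it meets $S$ in exactly one point. Since $1$ is not divisible by $p$, this is a contradiction. The same argument applies to $m$. Hence the primitive vectors $u_\ell,u_m$ exist, and they are linearly independent over $\R$ because $\ell\neq m$.

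\emph{Product structure (i).} Every line parallel to $\ell$ that meets $S$ does so in a positive multiple of $p$, hence in at least $p$ points. These lines partition $S$, so at most $p^2/p=p$ of them meet $S$; call their number $k$. Likewise let $k'\le p$ be the number of lines parallel to $m$ that meet $S$. Each point of $S$ is the intersection of its $\ell$-line and its $m$-line. Two lines with distinct directions meet in at most one point, so $p^2=\abs S\le kk'\le p^2$. This forces $k=k'=p$, and it also forces \emph{every} intersection point of a chosen $\ell$-line with a chosen $m$-line to lie in $S$.

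Next write each point of $\R^2$ uniquely as $au_\ell+bu_m$ with $a,b\in\R$. The $\ell$-lines are indexed by their $b$-value and the $m$-lines by their $a$-value, which gives $S=\set{au_\ell+bu_m : a\in A,\ b\in B}$ for real sets $A,B$ with $\abs A=\abs B=p$. Both sets contain $0$ because the origin is in $S$. Integrality is the only subtle point, and it follows from the fact that all intersections lie in $S$. For each $a\in A$, the point $au_\ell=au_\ell+0\cdot u_m$ belongs to $S$, so it is a lattice point on the line $\R u_\ell$. Since $u_\ell$ is primitive, this gives $a\in\Z$. The same argument with $bu_m$ gives $B\subseteq\Z$.

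\emph{Parts (ii) and (iii).} Part (ii) is immediate. When $\abs{\det(u_\ell,u_m)}=1$, the linear map $g$ with $g(u_\ell)=(1,0)$ and $g(u_m)=(0,1)$ lies in $\mathrm{GL}_2(\Z)$, and by (i) it sends $au_\ell+bu_m$ to $(a,b)$, so $g(S)=A\times B$.

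For (iii), observe from (i) that $S-S\subseteq\Z u_\ell+\Z u_m$, a sublattice of index $\abs{\det(u_\ell,u_m)}$. If $\langle S-S\rangle=\Z^2$, this index must be $1$, so (ii) applies. Then $g(S-S)=(A-A)\times(B-B)$, which generates $\gcd(A-A)\Z\times\gcd(B-B)\Z$. Since $g$ is an automorphism, this group must equal $\Z^2$, which forces $\gcd(A-A)=\gcd(B-B)=1$.

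The step I expect to need the most care is the integrality of $A$ and $B$. It rests on the fact that all $p^2$ intersection points lie in $S$, including the ones on the axes $\ell$ and $m$ through the origin, and on the primitivity of $u_\ell$ and $u_m$. Everything else is routine counting and linear algebra.
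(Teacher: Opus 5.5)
Your proposal is correct and follows essentially the same route as the paper. Both arguments use the same double count to show that $S$ is the full $p\times p$ grid of intersection points, and both get integrality from the axis points $au_\ell,\,bu_m\in S$ (the paper's $s^\sharp, s^\flat$) together with primitivity. The differences are cosmetic: you prove rationality by contradiction using the line $\ell$ through $0\in S$, and in (iii) you first extract $\abs{\det(u_\ell,u_m)}=1$ from $S-S\subseteq\Z u_\ell+\Z u_m$ and then apply $g$, rather than computing the index of $\langle S-S\rangle$ as a single product.
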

\begin{proof}
	\emph{The lines are rational.}
	Since $\abs S=p^2>0$, some line parallel to $\ell$ meets $S$, necessarily in at least $p\ge 2$ points, so that parallel line passes through two lattice points and hence has a primitive integer direction vector $u_\ell\in\Z^2$.
	The line $\ell$ shares this direction and passes through the origin, so it contains the nonzero lattice point $u_\ell$ and is itself rational.
	Likewise for $m$, giving $u_m$.
	The vectors $u_\ell,u_m$ are $\Q$-linearly independent because $\ell\neq m$.

	\begin{enumerate}[(i)]
		\item \emph{Product structure.}
		We first count the grid.
		The lines parallel to $\ell$ partition $\Z^2$ and each meets $S$ in a multiple of $p$.
		As these counts sum to $\abs S=p^2$ and every nonzero one is $\ge p$, at most $p$ of them meet $S$.
		Symmetrically at most $p$ lines parallel to $m$ meet $S$.
		Since $\ell$ and $m$ have distinct directions, each line parallel to $\ell$ meets each line parallel to $m$ in exactly one point, so $S$ is contained in the grid of those intersection points, a set of at most $p\cdot p=p^2$ points.
		As $\abs S=p^2$, equality holds throughout: exactly $p$ lines in each family meet $S$, each in exactly $p$ points, and $S$ is the \emph{full} grid of all $p^2$ intersections.

		We now read off product coordinates from this grid.
		By hypothesis $0\in S$, which we use as base point.
		For $s\in S$ let $s^\flat$ and $s^\sharp$ be the lattice points
		\[
			s^\flat=\lrp{s+\R u_\ell}\cap\R u_m,\qquad
			s^\sharp=\lrp{s+\R u_m}\cap\R u_\ell.
		\]
		Both lie in $S$.
		Because $u_m$ is primitive, the lattice points on the line $\R u_m$ are exactly $\Z u_m$, so $s^\flat\in\Z u_m$.
		Because $s$ and $s^\flat$ are two lattice points on the line $s+\R u_\ell$ of primitive direction $u_\ell$, their difference is an integer multiple of $u_\ell$, so $s-s^\flat\in\Z u_\ell$.
		Writing $s=\alpha u_\ell+\beta u_m$ uniquely over $\Q$, comparison gives $s^\flat=\beta u_m$ and $s-s^\flat=\alpha u_\ell$, whence $\alpha,\beta\in\Z$.
		By the symmetric argument $s^\sharp=\alpha u_\ell$.
		Thus $\alpha$ is constant along each line parallel to $m$ (the level sets $s+\R u_m$), and $\beta$ is constant along each line parallel to $\ell$. Write $a(s)=\alpha$ and $b(s)=\beta$ for these integer coordinates.
		The $p$ lines parallel to $m$ meeting $S$ therefore carry $p$ distinct values of $a$, forming a set $A$ with $\abs A=p$, and similarly the $p$ lines parallel to $\ell$ give $B$ with $\abs B=p$.
		Since $S$ is the full grid, every line parallel to $m$ meets every line parallel to $\ell$ in a point of $S$, so every pair $(a,b)\in A\times B$ occurs, and
		\[
			S=\set{a u_\ell+b u_m:\ a\in A,\ b\in B},
		\]
		which is~(i).

		\item \emph{Genuine product.}
		If $u_\ell,u_m$ form a $\Z$-basis then the unimodular map $a u_\ell+b u_m\mapsto(a,b)$ is exactly the automorphism $g$, and it carries $S$ to the product $A\times B\subseteq\Z^2$.

		\item \emph{Full affine span.}
		Suppose $\langle S-S\rangle=\Z^2$.
		Recall from~(i) that each $s\in S$ has coordinates $s=a(s)u_\ell+b(s)u_m$ with $a(s)\in A$ and $b(s)\in B$.
		Fix a value $b\in B$ and consider the elements of $S$ with that second coordinate.
		By~(i) these are exactly the points $au_\ell+bu_m$ for $a\in A$, all lying on one line parallel to $\ell$.
		Their pairwise differences are the vectors $(a-a')u_\ell$ with $a,a'\in A$, which generate $\gcd(A-A)\,\Z u_\ell$.
		Symmetrically, fixing a value $a\in A$ and letting the second coordinate range over $B$ gives the differences $(b-b')u_m$ with $b,b'\in B$, generating $\gcd(B-B)\,\Z u_m$.
		Every difference in $S-S$ is a sum of one vector of each type, and $u_\ell,u_m$ are independent, so
		\[
			\langle S-S\rangle=\gcd(A-A)\,\Z u_\ell\ \oplus\ \gcd(B-B)\,\Z u_m .
		\]
		This sublattice has index
		\[
			\gcd(A-A)\cdot\gcd(B-B)\cdot\abs{\det(u_\ell,u_m)}
		\]
		in $\Z^2$.\footnote{The factor $\abs{\det(u_\ell,u_m)}$ is the index of $\Z u_\ell\oplus\Z u_m$ in $\Z^2$, and $\gcd(A-A)\cdot\gcd(B-B)$ is the further index of the displayed lattice inside $\Z u_\ell\oplus\Z u_m$.}
		Full affine span forces this index to equal $1$, so all three factors equal $1$.
		In particular $\abs{\det(u_\ell,u_m)}=1$, which is the hypothesis of~(ii), so $g(S)=A\times B$.
	\end{enumerate}
	This finishes the proof.
\end{proof}

\begin{lemma}[Fibre decomposition of a product tiling]
	\label{lemma:product fibre decomposition}
	Let $A,B\subseteq\Z$ be finite and $F=A\times B$, and let $T$ be an $F$-tiling of $\Z^2$.
	Write the row and column fibres 
	$$
		S_y=\set{x:(x,y)\in T}
		\quad
		\text{and}
		\quad
		T_x=\set{y:(x,y)\in T}.
	$$
	Then, for all $n,m\in\Z$:
	\begin{enumerate}[(1)]
		\item the sets $S_{n-b}$ ($b\in B$) are pairwise disjoint, and $A\oplus R_n=\Z$, where $R_n=\bigsqcup_{b\in B}S_{n-b}$.
		\item the sets $T_{m-a}$ ($a\in A$) are pairwise disjoint, and $B\oplus C_m=\Z$, where $C_m=\bigsqcup_{a\in A}T_{m-a}$.
	\end{enumerate}
\end{lemma}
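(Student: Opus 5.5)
The plan is a direct counting argument: restrict the tiling identity to one horizontal row $y=n$, where the point $(x,n)$ is covered by the translate $F+t$, $t=(t_1,t_2)\in T$, precisely when $x-t_1\in A$ and $n-t_2\in B$. The tiling condition \eqref{equation:tiling condition} says that for every $x\in\Z$ there is exactly one pair $(a,b)\in A\times B$ with $(x-a,\,n-b)\in T$, that is, with $x-a\in S_{n-b}$.

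For (1), I would unpack this as a statement about the sets $S_{n-b}$, $b\in B$, and then split it into its two halves.

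Disjointness: suppose some $z$ lies in $S_{n-b}\cap S_{n-b'}$ with $b\neq b'$. Fix any $a\in A$ and put $x=z+a$. Then both $(a,b)$ and $(a,b')$ represent $(x,n)$, which contradicts uniqueness. So the union $R_n=\bigsqcup_{b\in B}S_{n-b}$ is genuinely disjoint.

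Exact covering: by disjointness, each $r\in R_n$ determines a unique $b$ with $r\in S_{n-b}$. The representations of $(x,n)$ therefore correspond bijectively to pairs $(a,r)\in A\times R_n$ with $a+r=x$. Exactly one representation exists for every $x$, so exactly one such pair exists for every $x$, and this is precisely $A\oplus R_n=\Z$.

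Part (2) is the same argument with the coordinates exchanged, restricting instead to the column $x=m$ and using the fibres $T_{m-a}$ together with $B$. I do not expect any genuine obstacle here. The only point needing care is to derive disjointness before identifying representations with pairs in $A\times R_n$, since the count would otherwise be ambiguous.
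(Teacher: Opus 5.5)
Your proof is correct. It is essentially the paper's proof, written as a direct count instead of with generating functions.

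The paper encodes the tiling as $A(x)B(y)T(x,y)=\one(x)\one(y)$. Comparing coefficients of $y^n$ gives $A(x)\rho_n(x)=\one(x)$, where $\rho_n=\sum_{b\in B}S_{n-b}$. It then rules out a coefficient $\ge 2$ in $\rho_n$ by shifting it by some $a\in A$. This matches your disjointness step, and $A(x)R_n(x)=\one(x)$ is your bijection between representations of $(x,n)$ and pairs $(a,r)\in A\times R_n$ with $a+r=x$.
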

\begin{proof}
	We only prove (1) as statement (2) is identical with the roles of the coordinates exchanged.
	The argument is algebraic.
	To the finite sets $A$ and $B$ attach the mask polynomials
	\[
		A(x)=\sum_{a\in A}x^a,\qquad B(y)=\sum_{b\in B}y^b,
	\]
	so that, since $F=A\times B$, the Laurent polynomial of $F$ is
	\[
		F(x,y)=\sum_{(i,j)\in F}x^iy^j=A(x)\,B(y).
	\]
	Write $\one(x)=\sum_{i\in\Z}x^i$ for the all-ones series and $S_m(x)=\sum_{u\in S_m}x^u$ for the row-fibre polynomials, so that the Laurent series of $T$ is
	\[
		T(x,y)=\sum_{(i,j)\in T}x^iy^j=\sum_{m\in\Z}S_m(x)\,y^m.
	\]
	Every product below pairs a Laurent polynomial with a formal series, hence is well defined.
	That $T$ is an $F$-tiling (each cell of $\Z^2$ covered exactly once) is nothing but the identity
	\[
		F(x,y)\,T(x,y)=\one(x)\,\one(y),
		\qquad\text{that is,}\qquad
		A(x)\,B(y)\,T(x,y)=\one(x)\,\one(y).
	\]
	Using
	\[
		B(y)\,T(x,y)=\sum_{n\in \Z}\lrp{\sum_{b\in B}S_{n-b}(x)}y^n
	\]
	and comparing coefficients of $y^n$ gives
	\[
		A(x)\,\rho_n(x)=\one(x),\qquad\text{where }\rho_n(x)=\sum_{b\in B}S_{n-b}(x).
	\]
	The coefficients of $A(x)$ and of $\rho_n(x)$ are non-negative integers, and every coefficient of the product is $1$.
	If the coefficient of $x^k$ in $\rho_n(x)$ were at least $2$ for some $k$, then for any $a\in A$ the coefficient of $x^{k+a}$ in $A(x)\rho_n(x)$ would also be at least $2$, a contradiction.
	Hence every coefficient of $\rho_n(x)$ lies in $\set{0,1}$, so the sets $S_{n-b}$ ($b\in B$) are pairwise disjoint and $\rho_n(x)$ is the mask polynomial of $R_n=\bigsqcup_{b\in B}S_{n-b}$.
	The identity $A(x)\,R_n(x)=\one(x)$ then says exactly that $A\oplus R_n=\Z$.
\end{proof}

We isolate a technical lemma below that will be used in the proof of the proposition that follows. 

\begin{lemma}
	\label{lemma:residue block rigidity}
	Let $p$ be prime and $B\subseteq\Z$ a complete residue system modulo $p$ with $\gcd(B-B)=1$.
	For any integer $y$ let $\bar y$ denote the residue of $y$ modulo $p$.
	Let $I\subseteq\set{0,1,\dots,p-1}$ with $\abs I\ge 2$, and let $r\colon\Z\to X$ be a function into any set $X$ such that
	\begin{equation}
		\label{equation:block constancy}
		r_{y+b}=r_{y+c}\qquad\text{for all }b,c\in B\text{ and all }y\in\Z\text{ with }\bar y\in I.
	\end{equation}
	Then $r$ is constant.
\end{lemma}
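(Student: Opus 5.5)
The plan is to recast the hypothesis as invariance under a family of partitions of $\Z$ and then show these partitions link everything together. Fix $i\in I$. Since $B$ is a complete residue system modulo $p$, every $z\in\Z$ lies in exactly one block $y+B$ with $\bar y=i$. Indeed, $z=y+b$ forces $b$ to be the unique element of $B$ congruent to $z-i$. So the blocks $y+B$ with $\bar y=i$ form a partition $\mc P_i$ of $\Z$ (the tiling $B\oplus(i+p\Z)$), and \eqref{equation:block constancy} says that $r$ is constant on every block of $\mc P_i$, for every $i\in I$.

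Next I would fix two distinct residues $i,j\in I$ and put $\delta=j-i\not\equiv 0\pmod p$. Let $\varphi\colon B\to B$ send $b$ to the unique element of $B$ congruent to $b+\delta$; this is a permutation of $B$. The block $y+B$ with $\bar y=i$ meets the type-$j$ block $y+\varphi(b)-b+B$, because both contain $y+\varphi(b)$. Symmetrically, a type-$j$ block $y'+B$ meets the type-$i$ block $y'-(\varphi(c)-c)+B$ for every $c\in B$. Composing these two moves shows that the value of $r$ on the type-$i$ block $y+B$ equals its value on $y+d+B$ for every $d\in D-D$, where
\[
	D=\set{\varphi(b)-b : b\in B}.
\]
Every element of $D$ is $\equiv\delta\pmod p$, so $D-D\subseteq p\Z$. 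If $\langle D-D\rangle=p\Z$, then all type-$i$ blocks carry the same value of $r$. Since these blocks cover $\Z$, $r$ is then constant.

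The main step, and the only place where $\gcd(B-B)=1$ and the primality of $p$ enter, is to show $\langle D-D\rangle=p\Z$. Suppose instead that $\langle D-D\rangle=pq\Z$ for some prime $q$. Then all elements of $D$ are congruent to a single $\delta_0$ modulo $pq$, so $\varphi(b)\equiv b+\delta_0\pmod{pq}$ for all $b\in B$.

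Modulo $p$, $\varphi$ acts on the residues of $B$ as translation by $\delta\ne0$. Since $p$ is prime, this is a single $p$-cycle, so $\varphi$ is a $p$-cycle on $B$. Summing $\varphi(b)-b$ around this cycle gives
\[
	0=\sum_{b\in B}\bigl(\varphi(b)-b\bigr)\equiv p\,\delta_0\pmod{pq},
\]
hence $\delta_0\equiv0\pmod q$. I would then split on $q$:
\begin{itemize}
	\item If $q=p$, this says $\delta_0\equiv 0\pmod p$, which contradicts $\delta_0\equiv\delta\not\equiv0\pmod p$.
	\item If $q\ne p$, then $\varphi(b)\equiv b\pmod q$ for every $b$. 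Because $\varphi$ acts transitively on $B$, all elements of $B$ are congruent modulo $q$, which contradicts $\gcd(B-B)=1$.
\end{itemize}
Hence $\langle D-D\rangle=p\Z$, and the lemma follows. The one point to double-check in writing this up is the bookkeeping that the two-step chain really realizes every difference $d_1-d_2$ with $d_1,d_2\in D$, and hence the whole group they generate.
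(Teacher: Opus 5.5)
Your proposal is correct and follows essentially the same route as the paper: you use the blocks $y+B$, the permutation $\varphi$ (the paper's $b\mapsto b^{+}$, with $D=\{h_b\}$), and the two-step moves through a type-$j$ block to reduce to $\langle D-D\rangle=p\Z$. The only difference is how you prove that last equality: a prime-divisor contradiction instead of the paper's telescoping-plus-index argument (its Claims~1--2), from the same ingredients ($\sum_b h_b=0$, $\varphi$ a $p$-cycle, $\gcd(B-B)=1$). One small fix: write ``$\langle D-D\rangle\subseteq pq\Z$ for some prime $q$'' rather than ``$=$'', since the index could be composite or $\langle D-D\rangle$ could be $\{0\}$, and your argument only uses the containment.
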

\begin{proof}
	For $y\in\Z$ with $\bar y\in I$, condition~\eqref{equation:block constancy} makes $r_{y+b}$ independent of $b\in B$.
	Write $\sigma(y)$ for this common value, so that $r_{y+b}=\sigma(y)$ for all $b\in B$ and integers $y$ such that $\bar y\in I$.
	This defines the function $\sigma$ on $J:=\set{y\in\Z:\bar y\in I}$.

	\smallskip\noindent\emph{Block decomposition: for every $n\in\Z$ and every $i\in I$ there is a unique $b\in B$ for which $y:=n-b$ satisfies $\bar y=i$.}
	Since $B$ is a complete residue system modulo $p$, there is a unique $b\in B$ with $b\equiv n-i\pmod p$.
	For this $b$ the integer $y=n-b$ satisfies $y\equiv i\pmod p$, that is, $\bar y=i$.

	\smallskip
	Fixing one $i\in I$, the block decomposition writes every $n\in\Z$ as $n=y+b$ with $b\in B$ and $y\in J$, whence $r_n=r_{y+b}=\sigma(y)$.
	Thus $r$ is determined by $\sigma$, and it suffices to show that $\sigma$ is constant on $J$.

	Letting $i$ instead range over $I$ records a first constraint on $\sigma$:
	\begin{equation}
		\label{equation:sigma step}
		\sigma(y)=\sigma(y')\qquad\text{whenever }\bar y,\bar y'\in I\text{ and }y-y'\in B-B.
	\end{equation}
	Indeed, write $y-y'=b_i-b_j$ with $b_i,b_j\in B$ and set $w=y+b_j = y' + b_i$.
	Now $\bar y\in I$ and $b_j\in B$, so the definition of $\sigma$ gives $\sigma(y)=r_{y+b_j}=r_w$.
	Similarly, $\bar y'\in I$ and $b_i\in B$ give $\sigma(y')=r_{y'+b_i}=r_w$.
	Hence $\sigma(y)=\sigma(y')$.
	
	This is weaker than constancy.
	It equates $\sigma$ only across a single $(B-B)$-step joining two points of $J$, and such a step is nontrivial only when $\abs I\ge2$ (for $\abs I=1$ the two endpoints coincide, and the Remark after the proof exhibits a non-constant $\sigma$).
	The rest of the proof shows that when $\abs I\ge2$ these steps already generate enough of $\Z$ to force $\sigma$ to a single value.

	Since $\abs I\ge 2$, choose distinct $q,q'\in I$ and set $\delta=q'-q$, so $\delta\not\equiv0\pmod p$.
	For $b\in B$ let $b^{+}\in B$ be the unique element with $b^{+}\equiv b+\delta\pmod p$, and put $h_b=b^{+}-b\in B-B$, so $h_b\equiv\delta\pmod p$.
	Since $p$ is prime and $\delta\not\equiv0\pmod{p}$, the map $b\mapsto b^{+}$ is a single $p$-cycle on $B$.
	List $B$ along it as $b_0,b_1,\dots,b_{p-1}$ with $b_{k+1}=b_k^{+}$ (indices modulo $p$), so $h_{b_k}=b_{k+1}-b_k$.

	\smallskip\noindent\emph{Claim 1: the elements $h_b$ generate $\Z$.}
	Because $b\mapsto b^{+}$ merely permutes $B$, we have
	\[
		\sum_{b\in B} h_b
		=\sum_{b\in B} b^{+}-\sum_{b\in B} b=0.
	\]
	Listing the $h_b$ along the cycle, the partial sums telescope:
	\[
		\sum_{l=0}^{k-1}h_{b_l}=\sum_{l=0}^{k-1}(b_{l+1}-b_l)=b_k-b_0 .
	\]
	As $k$ runs through $0,1,\dots,p-1$ the element $b_k$ runs through all of $B$, so every difference $b_k-b_0$ lies in $\langle h_b:b\in B\rangle$.
	Since $b_i-b_j=(b_i-b_0)-(b_j-b_0)$, every element of $B-B$ lies there too, and as each $h_b\in B-B$ this gives $\langle h_b:b\in B\rangle=\langle B-B\rangle$.
	Finally $\langle B-B\rangle=\gcd(B-B)\,\Z=\Z$ since $\gcd(B-B)=1$, establishing the claim.

	\smallskip\noindent\emph{Claim 2: the differences $h_b-h_c$ generate $p\Z$.}
	Let $H=\langle h_b-h_c:b,c\in B\rangle$.
	Every $h_b\equiv\delta\pmod p$, so each $h_b-h_c$ is divisible by $p$, giving $H\subseteq p\Z$.
	For the reverse inclusion, note first that $ph_{b_0}\in H$: using $\sum_b h_b=0$ and $\abs B=p$,
	\[
		ph_{b_0}=\sum_{b\in B} h_{b_0} - \sum_{b\in B} h_b=\sum_{b\in B} (h_{b_0}-h_b)\in H .
	\]
	Pass to the quotient $\Z/H$.
	The $h_b$ are pairwise congruent modulo $H$, so they share a common image $\bar h\in\Z/H$.
	By Claim~1 the $h_b$ generate $\Z$, hence $\bar h$ generates $\Z/H$, so $\Z/H$ is cyclic.
	Moreover $p\bar h=\overline{ph_{b_0}}=0$, so the order of $\bar h$ divides $p$, whence $[\Z:H]\in\set{1,p}$.
	But $H\subseteq p\Z$ forces $[\Z:H]\ge p$, so $[\Z:H]=p$ and $H=p\Z$.

	\smallskip\noindent\emph{Claim 3: $\sigma$ is constant.}
	Consider the shifts preserving $\sigma$ on the points $z$ with $\bar z=q$,
	\[
		G=\set{t\in p\Z:\ \sigma(z)=\sigma(z+t)\text{ for every }z\in\Z\text{ with }\bar z=q}.
	\]
	This is well-defined, since for $t\in p\Z$ and $\bar z=q$ the shifted point satisfies $\overline{z+t}=q$, so both $\sigma$-values are defined.
	Moreover $G$ is a subgroup of $p\Z$.\footnote{%
		Indeed, $G$ contains $0$.
		If $t\in G$, then applying its defining identity at $z-t$ gives $-t\in G$.
		If $t,t'\in G$, then $\sigma(z)=\sigma(z+t)=\sigma(z+t+t')$ for every $z$ with $\bar z=q$, so $t+t'\in G$.}
	Each generator $h_b-h_c$ of $p\Z$ lies in $G$: for $\bar z=q$ the two-step path
	\[
		z\to z+h_b\to z+h_b-h_c
	\]
	has $\overline{z+h_b}=q'$ at the midpoint and $\overline{z+h_b-h_c}=q$ at the endpoint, both in $I$, and each leg is a $(B-B)$-step, so~\eqref{equation:sigma step} gives
	\[
		\sigma(z)=\sigma(z+h_b)=\sigma(z+(h_b-h_c)).
	\]
	By Claim~2 these differences generate $p\Z$, so $G=p\Z$, and $\sigma$ takes a single value $v$ on every $z$ with $\bar z=q$.
	Finally, any $y$ with $\bar y\in I$ satisfies $\sigma(y)=v$: choosing $d\in B-B$ with $d\equiv q-\bar y\pmod p$ (possible since $B-B$ meets every residue class), the integer $y+d$ satisfies $\overline{y+d}=q$, and~\eqref{equation:sigma step} gives $\sigma(y)=\sigma(y+d)=v$.
	Thus $\sigma(y)=v$ for every $y\in J$, and by the opening paragraph $r_n=v$ for every $n\in\Z$, so $r$ is constant.
\end{proof}

\begin{remark}
	The hypothesis $\abs I\ge 2$ cannot be relaxed to $I\neq\varnothing$.
	For $p=2$, $B=\set{0,1}$ and $I=\set 0$, condition~\eqref{equation:block constancy} reads $r_{2k}=r_{2k+1}$, satisfied by the non-constant $r$ with $r_{2k}=r_{2k+1}=k\bmod 2$.
\end{remark}

\begin{proposition}[Axis-periodicity of product tilings]
	\label{proposition:product periodicity}
	Let $p$ be prime and let $A,B\subseteq\Z$ be complete residue systems modulo $p$ with $\gcd(A-A)=\gcd(B-B)=1$.
	Then every $(A\times B)$-tiling $T$ of $\Z^2$ has a coordinate period.
	More precisely, either $(0,p)+T=T$ or $(p,0)+T=T$ (or both).
\end{proposition}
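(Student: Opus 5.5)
The plan is to reduce everything to one–dimensional rigidity, using the fibre decomposition of Lemma~\ref{lemma:product fibre decomposition} and Corollary~\ref{corollary:prime crs rigidity}.

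\emph{Step 1: the row and column unions are cosets of $p\Z$.} By Lemma~\ref{lemma:product fibre decomposition}(1), for every $n$ the sets $S_{n-b}$, $b\in B$, are pairwise disjoint and $A\oplus R_n=\Z$. Since $\abs A=p$ and $\gcd(A-A)=1$, Corollary~\ref{corollary:prime crs rigidity} gives $R_n=p\Z+r_n$ for a residue $r_n\in\Z/p\Z$. Symmetrically, Lemma~\ref{lemma:product fibre decomposition}(2) and the same corollary give $C_m=p\Z+c_m$.

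\emph{Step 2: a block condition on $r$.} Each row fibre $S_y$ lies in $R_{y+b}$ for every $b\in B$. Hence if $S_y\neq\varnothing$, all the residues $r_{y+b}$, $b\in B$, coincide. Let $J=\set{y:S_y\neq\varnothing}$ and let $I$ be the set of residues $i$ such that the whole class $p\Z+i$ lies in $J$.

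\emph{Step 3: case split.}
\begin{itemize}
\item \emph{If $\abs I\ge 2$:} Lemma~\ref{lemma:residue block rigidity} applies to $r$ and makes it constant, say $r_n\equiv\rho$. Then every anchor has first coordinate $\equiv\rho\pmod p$, so $T_x=\varnothing$ unless $x\equiv\rho$. Since $A$ is a complete residue system, in $C_m=\bigsqcup_{a}T_{m-a}$ exactly one $a$ has $m-a\equiv\rho$. So every nonempty-class column $T_x$ equals some $C_m$, a coset of $p\Z$, and $(0,p)+T=T$.
\item \emph{If $J\subseteq p\Z+i$ for a single residue $i$:} in $R_n=\bigsqcup_b S_{n-b}$ only one term survives. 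So every $S_y$ with $y\equiv i$ is a coset of $p\Z$, and $(p,0)+T=T$.
\item \emph{By symmetry:} the same two alternatives for the columns ($K=\set{x:T_x\neq\varnothing}$ with the function $c$) give the other conclusions.
\end{itemize}

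\emph{The main obstacle} is the middle ground. This is where $J$ meets at least two residue classes, but fewer than two classes are entirely contained in $J$ (and likewise for $K$). Here I would exploit the interplay of rows and columns. Every anchor $(x,y)$ satisfies $x\equiv r_{y+b}$ and $y\equiv c_{x+a}$ for all $a\in A$, $b\in B$. Density considerations should also help: each $R_n$ and $C_m$ has density $1/p$, and $T$ has density $1/p^2$.

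My first attempt would be to show the following two facts.
\begin{itemize}
\item Whenever some row $y$ of a residue class is empty, the block structure of the column residues $c$ forces all rows of that class to be empty.
\item Consequently $J$ is a union of residue classes modulo $p$.
\end{itemize}
The intended mechanism is this. An empty row $y$ means each $R_{y+b}$ is covered by the other $p-1$ rows $y+b-b'$. Chasing this through the column condition, via the complete-residue property of $A$, should propagate emptiness by steps in $B-B$. These steps generate $\Z$ by $\gcd(B-B)=1$, but they also preserve the residue modulo $p$ by the argument of Claim~2 in Lemma~\ref{lemma:residue block rigidity}.

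Once $J$ is a union of classes, the dichotomy $\abs I\ge2$ or $\abs I=1$ covers all cases. The step $\abs I=1$ then needs $J$ to be exactly one class, which again yields $(p,0)$-periodicity.
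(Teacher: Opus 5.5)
Your Steps 1 and 2 and the two end-cases of Step 3 are correct. The direct ``only one term survives'' arguments you give for the periodicity conclusions are a fine substitute for the paper's polynomial computation. However, the proof is not complete, because the ``middle ground'' is left open and what you sketch for it is not an argument. In particular, single steps in $B-B$ do not preserve residues modulo $p$; only the differences $h_b-h_c$ in Claim~2 of Lemma~\ref{lemma:residue block rigidity} do. So the proposed propagation of emptiness, with steps that both generate $\Z$ and fix residues, is not coherent as stated.

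The missing idea is that the middle ground is empty, and this follows in two lines from the column cosets you already have in Step~1. A row $y$ is nonempty if and only if $\bar y\in\Image(c)$:
\begin{itemize}
\item If $(x,y)\in T$, then for any $a\in A$ we have $y\in T_x\subseteq C_{x+a}=p\Z+c_{x+a}$, so $\bar y=c_{x+a}$.
\item Conversely, if $\bar y=c_m$, then $y\in C_m=\bigsqcup_{a\in A}T_{m-a}$, so $(m-a,y)\in T$ for some $a$.
\end{itemize}
Hence $J=\set{y:\bar y\in\Image(c)}$ is a union of full residue classes, and your $I$ is exactly $\Image(c)$, which is nonempty. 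Your dichotomy then becomes exhaustive. If $c$ is constant, $J$ is a single class and $(p,0)+T=T$. If $c$ is non-constant, then $\abs I\ge 2$, Lemma~\ref{lemma:residue block rigidity} makes $r$ constant, and $(0,p)+T=T$. With this observation inserted, your argument coincides with the paper's proof, which splits on whether $s$ is constant and feeds $I=\Image(s)$ into Lemma~\ref{lemma:residue block rigidity}.
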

\begin{proof}
	Throughout, 
	$$
		\one(x)=\sum_{i\in\Z}x^i
		\quad \text{ and } \quad
		\one_p(x)=\sum_{k\in\Z}x^{pk},
	$$
	and similarly in $y$.
	Every product below pairs a Laurent polynomial with a formal series, hence is well defined.

	Since $A$ is a complete residue system modulo $p$, the map $A\times\Z\to\Z$, $(a,k)\mapsto a+pk$, is a bijection, so $A(x)\,\one_p(x)=\one(x)$, and similarly $B(y)\,\one_p(y)=\one(y)$.
	We use this in the final step below.
	Put
	$$
		U(x, y)=(x^p-1)T(x, y)
		\quad \text{ and } \quad
		W(x, y)=(y^p-1)T(x, y),
	$$
	so that
	\[
		U=0\iff (p,0)+T=T,
		\qquad  \text{ and } \qquad
		W=0\iff (0,p)+T=T.
	\]
	The proposition asserts $U=0$ or $W=0$. 
	
	As in Lemma~\ref{lemma:product fibre decomposition} define
	$$
		S_y=\set{x:(x,y)\in T}
		\quad
		\text{and}
		\quad
		T_x=\set{y:(x,y)\in T}.
	$$
	We know from Lemma~\ref{lemma:product fibre decomposition} that the sets $S_{n-b}$ are pairwise disjoint as $b$ varies over $B$, and that the set $R_n := \bigsqcup_{b\in B} S_{n-b}$ is an $A$-tiling of $\Z$.
	Using the fact that $A$ is a cluster of prime size with $\gcd(A - A) = 1$, we deduce from Corollary~\ref{corollary:prime crs rigidity} that each $R_n$ is a coset of $p\Z$.
	Thus there is a sequence $(r_n)_{n\in \Z}$ valued in $\set{0, 1, \ldots, p-1}$ such that $R_n = p\Z+r_n$ for all $n$.
	By the same reasoning we also get that the sets $T_{m-a}$ are pairwise disjoint as $a$ varies over $A$, that each $C_m=\bigsqcup_{a\in A}T_{m-a}$ is a $B$-tiling of $\Z$, and that there is a sequence $(s_m)_{m\in \Z}$ valued in $\set{0, 1, \ldots, p-1}$ such that $C_m = p\Z+s_m$ for all $m$.
	In particular 
	$$
		B(y)T(x, y)
		=\sum_n R_n(x)y^n
		=\one_p(x)\sum_n x^{r_n}y^n.
	$$
	As $(x^p-1)\one_p(x)=0$, multiplying by $(x^p-1)$ in the above equation gives $B(y)U(x, y)=0$, and symmetrically $A(x)W(x, y)=0$.

	\smallskip\noindent\emph{We show that the constancy of $r$ or $s$ suffices to show periodicity.}
	Suppose $r\equiv\rho$ is constant.
	Translating $T$ by $(-\rho,0)$ yields another $(A\times B)$-tiling, replaces each row residue $r_n$ by $r_n-\rho$, and commutes with translation by $(0,p)$, hence leaves the relation $(0,p)+T=T$ unchanged.
	We may therefore replace $T$ by this translate and assume $\rho=0$, so that $R_n=p\Z$ for every $n$.
	Then $R_n=\bigsqcup_b S_{n-b}\subseteq p\Z$ for every $n$, so every anchor $(x,y)\in T$ has $x\equiv0\pmod p$. Thus $T\subseteq p\Z\times\Z$.
	The defect $W(x, y)=(y^p-1)T(x, y)$ is then supported on $p\Z\times\Z$ as well, so we may write 
	$$
		W(x,y)=\sum_{k\in\Z}x^{pk}\,w_k(y)
	$$
	with each $w_k$ a Laurent series in $y$.
	Recall that $A(x)W(x, y)=0$.
	Because $A$ is a complete residue system modulo $p$, the exponents $a+pk$ \textup($a\in A$, $k\in\Z$\textup) are pairwise distinct, so in
	\[
		A(x)W(x, y)=\sum_{a\in A}\sum_{k\in\Z}x^{a+pk}\,w_k(y)
	\]
	the coefficient of $x^{a+pk}$ is exactly $w_k(y)$. Its vanishing forces $w_k=0$ for every $k$, so $W=0$, that is $(0,p)+T=T$.
	Symmetrically, when $s$ is constant the same argument with the coordinates exchanged gives $U=0$ and $(p,0)+T=T$.
	It therefore suffices to prove that $r$ or $s$ is constant.

	If $s$ is constant, then $(p,0)+T=T$ as shown above and we are done. Assume henceforth that $s$ is non-constant, and set $I=\Image(s)$, so that $\abs I\ge 2$.
	We check the hypothesis of Lemma~\ref{lemma:residue block rigidity} for the sequence $r=(r_n)_{n\in\Z}$ with this $I$.
	For a nonempty row $y$ and any $b\in B$ we have $S_y\subseteq R_{y+b}=p\Z+r_{y+b}$. 
	Since cosets of $p\Z$ are disjoint, the residues $r_{y+b}$ coincide as $b$ ranges over $B$.
	Thus $r_{y+b}=r_{y+c}$ for all $b,c\in B$ whenever the row $y$ is nonempty.
	Moreover a row $y$ is nonempty if and only if $\bar y\in\Image(s)$, where $\bar y$ is the residue of $y$ modulo $p$: if $(x,y)\in T$, then for any $a\in A$ and $m=x+a$ we have 
	$$
		y\in T_x\subseteq C_m=p\Z+s_m,
	$$
	so $\bar y=s_m\in\Image(s)$.
	Conversely $\bar y=s_m$ gives $y\in C_m=\bigsqcup_{a\in A}T_{m-a}$, so $(m-a,y)\in T$ for some $a\in A$.
	Combining the two, $r_{y+b}=r_{y+c}$ for all $b,c\in B$ and all $y$ with $\bar y\in I$.
	As $\gcd(B-B)=1$ and $\abs I\ge 2$, Lemma~\ref{lemma:residue block rigidity} gives that $r$ is constant, and the case of constant $r$ above yields $(0,p)+T=T$.
	In all cases $T$ has a coordinate period.
\end{proof}

\subsection{The Spectral Theorem}

Let $\T^2=(\R/\Z)^2$ be the 2-torus, and recall that the characters of $\T^2$ are indexed by $\Z^2$: for each $g\in \Z^2$, the map $\chi_g:\T^2\to \mathbb S^1$ defined as 
$$
	\chi_g(x)=e^{2\pi i\langle g,x\rangle}
$$
is a character, and $\set{\chi_g:\ g\in \Z^2}$ is precisely the set of characters of $\T^2$.

Now, for any probability measure $\nu$ on $\T^2$, the group $\Z^2$ acts on $L^2(\T^2,\nu)$ by multiplication: 
$$
	\sigma_g(\varphi)=\chi_g\,\varphi
$$
for all $g\in \Z^2$ and $\vp\in L^2(\T^2, \nu)$.
The constant function $\mathbf{1}$ is cyclic for this action, and the spectral theorem says this is the universal example.

Let $\mc H$ be a Hilbert space.
Recall that a vector $v\in\mathcal H$ is \emph{cyclic} for a unitary representation $\tau\colon\Z^2\to\mr U(\mathcal H)$ if the linear span of its orbit $\set{\tau_g v:g\in\Z^2}$ is dense in $\mathcal H$.
Equivalently, the only closed $\tau$-invariant subspace containing $v$ is $\mathcal H$ itself.
In the multiplication example above the orbit of $\mathbf 1$ is the set of characters $\set{\chi_g:g\in\Z^2}$, whose linear span is dense in $L^2(\T^2,\nu)$, so $\mathbf 1$ is cyclic.

\begin{theorem}[Spectral theorem]
	\label{theorem:spectral theorem}
	Let $\tau\colon\Z^2\to \mr U(\mathcal H)$ be a unitary representation with a cyclic unit vector $v$.
	Then there is a unique probability measure $\nu$ on $\T^2$ and a unitary isomorphism $\theta\colon\mathcal H\to L^2(\T^2,\nu)$ with $\theta(v)=\mathbf{1}$ that intertwines each $\tau_g$ with multiplication by $\chi_g$.
	Explicitly, writing $M_{\chi_g}\colon L^2(\T^2,\nu)\to L^2(\T^2,\nu)$ for the multiplication operator $\varphi\mapsto\chi_g\,\varphi$, we have
	\[
		\theta\circ\tau_g=M_{\chi_g}\circ\theta\qquad\text{for every }g\in\Z^2;
	\]
	that is, the square
	\[
		\begin{tikzcd}[column sep=large, row sep=large]
			\mathcal H \arrow[r, "\tau_g"] \arrow[d, "\theta"'] & \mathcal H \arrow[d, "\theta"] \\
			L^2(\T^2,\nu) \arrow[r, "M_{\chi_g}"'] & L^2(\T^2,\nu)
		\end{tikzcd}
	\]
	commutes for every $g\in\Z^2$.
	The probability measure $\nu$ is called the \textbf{spectral measure} of $v$.
\end{theorem}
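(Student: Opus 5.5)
The plan is to prove the spectral theorem via Herglotz--Bochner, by building the measure from the positive-definite function $g\mapsto\langle\tau_g v,v\rangle$ and transporting the cyclic subspace onto $L^2(\T^2,\nu)$.

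First, set $\phi(g)=\langle \tau_g v,v\rangle$ for $g\in\Z^2$. Since $\tau$ is unitary, $\tau_{-g}=\tau_g^{*}$, so for finitely supported coefficients $(c_g)$ we get
\[
	\sum_{g,h}c_g\bar c_h\,\phi(g-h)=\Big\|\sum_g c_g\tau_g v\Big\|^2\ge 0 .
\]
Thus $\phi$ is positive definite on $\Z^2$. By the Herglotz--Bochner theorem for the discrete group $\Z^2$, whose dual is $\T^2$, there is a unique positive finite Borel measure $\nu$ on $\T^2$ with $\phi(g)=\int\chi_g\,d\nu$ for all $g$. It is a probability measure because $\phi(0)=\|v\|^2=1$. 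If one wants to avoid quoting Bochner, I would obtain $\nu$ as a weak-$*$ limit of the Fejér-smoothed densities $\sum_{\abs{g_i}<N}(1-\abs{g_1}/N)(1-\abs{g_2}/N)\phi(g)\overline{\chi_g}$. These are nonnegative by positive definiteness and have total mass $1$, so by Banach--Alaoglu they accumulate to a measure $\nu$, and its Fourier coefficients are $\phi(g)$. Uniqueness follows because trigonometric polynomials are dense in $C(\T^2)$.

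Next I define $\theta$ on the dense span of the orbit by $\theta\big(\sum c_g\tau_g v\big)=\sum c_g\chi_g$. The key check is that this is well defined and isometric. It suffices to compute
\[
	\Big\|\sum c_g\chi_g\Big\|^2_{L^2(\nu)}=\sum_{g,h}c_g\bar c_h\int\chi_{g-h}\,d\nu=\sum_{g,h}c_g\bar c_h\,\phi(g-h),
\]
which matches the norm computed above. An isometry on the span kills vectors of norm zero, so $\theta$ is well defined. It therefore extends by continuity from the dense span, using cyclicity of $v$, to an isometry $\mathcal H\to L^2(\T^2,\nu)$. Its image is closed and contains the trigonometric polynomials, which are dense in $C(\T^2)$ and hence in $L^2(\nu)$ for a finite Borel measure. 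So $\theta$ is unitary, and $\theta(v)=\chi_0=\mathbf 1$.

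For intertwining, on generators we have $\theta(\tau_g\tau_h v)=\chi_{g+h}=\chi_g\theta(\tau_h v)$. This extends by linearity and continuity, giving $\theta\circ\tau_g=M_{\chi_g}\circ\theta$. For uniqueness of $\nu$, any $\nu'$ admitting such a $\theta'$ satisfies $\int\chi_g\,d\nu'=\langle M_{\chi_g}\mathbf 1,\mathbf 1\rangle=\langle\tau_g v,v\rangle=\phi(g)$, so its Fourier coefficients agree with those of $\nu$. The main obstacle is the existence of $\nu$, which is exactly Bochner's theorem. Everything else is formal, and I would simply cite Bochner/Herglotz, sketching the Fejér argument only if self-containment is wanted.
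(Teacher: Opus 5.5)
Your proof is correct and complete. The paper does not prove this theorem but quotes it as the classical spectral theorem, and your argument is exactly the standard proof being invoked: Herglotz--Bochner (or your Fej\'er/Banach--Alaoglu construction) applied to the positive-definite function $g\mapsto\langle\tau_g v,v\rangle$, then the isometric extension $\sum_g c_g\tau_g v\mapsto\sum_g c_g\chi_g$ from the dense cyclic span, with surjectivity from the density of trigonometric polynomials in $L^2(\T^2,\nu)$ and uniqueness of $\nu$ read off from its Fourier coefficients.
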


\subsection{Dynamical Formulation}
\label{subsection:dynamical formulation}

The following notation is fixed for the remainder of this subsection. Let $F\subseteq\Z^2$ be an exact cluster and fix an $F$-tiling $T$.
Then $1_T$ is an element of the full shift $\set{0, 1}^{\Z^2}$.
Let $X$ be the orbit closure of $1_T$ in this full shift, and equip $X$ with a $\Z^2$-ergodic measure $\mu$.\footnote{
	Such a measure $\mu$ exists.
	The space $X$ is a nonempty compact space on which $\Z^2$ acts by homeomorphisms, so it carries a $\Z^2$-invariant Borel probability measure (Krylov--Bogolyubov for the amenable group $\Z^2$), and the ergodic decomposition then provides an ergodic one.}
Set
\[
	A=\set{x\in X : x(0,0)=1},\qquad f=1_A\in L^2(X,\mu).
\]
Since $|F|$ many translates of $A$ partition $X$, we have $\mu(A) = 1/|F|$.
Also, $f\neq0$ since 
$$
	\norm f_2^2=\mu(A)=1/\abs F>0,
$$
so that the cyclic unit vector $f/\norm f_2$ is well defined.
Let
$$
	\mc H = \overline{\text{Span}\set{g\cdot f/\|f\|_2:\ g\in \Z^2}}.
$$
Then $f/\norm{f}_2$ is a cyclic unit vector of $\mc H$.
Let $\nu$ be the spectral measure associated to this cyclic vector and $\theta:\mc H\to L^2(\T^2, \nu)$ be the intertwining isomorphism furnished by the spectral theorem.

We will use the following theorem, which transfers periodicity from the function $f$ back to the tilings themselves.

\begin{theorem}[{\cite[Section~2]{bhattacharya_tilings}}]
	\label{theorem:bhattacharya correspondence}
	If $A$ is $1$-periodic, that is, if there is a nonzero vector $g\in \Z^2$ such that $g\cdot A = A$ (up to sets of measure 0), then $\mu$-almost every point of $X$ is $1$-periodic.
\end{theorem}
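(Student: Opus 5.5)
The plan is to work entirely in the measure-theoretic setting of this subsection, following \cite[Section~2]{bhattacharya_tilings}. Assume $g\cdot A=A$ up to a $\mu$-null set for some nonzero $g\in\Z^2$. For each $h\in\Z^2$, the translate $h\cdot A$ is the set of $x\in X$ having an anchor at a fixed position determined by $h$. Invariance of $A$ under $g$ therefore gives $x(h)=x(h+g)$ for $\mu$-a.e.\ $x$, for that single $h$. Since $\Z^2$ is countable, we intersect these full-measure sets over all $h\in\Z^2$. On the resulting full-measure set $X_0$, every $x$ satisfies $x(h+g)=x(h)$ for all $h$. That is, $g$ is a period of $x$, so $x$ is $1$-periodic.

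The steps are as follows.

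\emph{Step 1: translate $A$ to any position.} Check that $\mu((g\cdot A)\triangle A)=0$ implies $\mu((h+g)\cdot A\,\triangle\,h\cdot A)=0$ for every $h$. This holds because $\mu$ is $\Z^2$-invariant and the action commutes: apply $h$ to the null set $(g\cdot A)\triangle A$.

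\emph{Step 2: identify $h\cdot A$ concretely.} Using $(v\cdot c)(x)=c(x-v)$, we have $h\cdot A=\set{x\in X: x(h)=1}$. Hence the symmetric difference in Step 1 is exactly the set of $x$ with $x(h+g)\neq x(h)$.

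\emph{Step 3: take a countable union.} The union of these null sets over $h\in\Z^2$ is null. Its complement consists of configurations invariant under the shift by $g$.

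There is no substantial obstacle here; the argument is a direct unwinding of definitions plus countable additivity. The one point needing care is the sign convention of the shift: one must make sure that invariance of $A$ under $g$ really yields the period $g$, and not $-g$. Either choice is harmless, since $-g$ is also a nonzero period.

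Finally, ergodicity of $\mu$ is not needed for this implication, only invariance. If a stronger conclusion is desired, such as every point of $\operatorname{supp}\mu$ being $g$-periodic, it follows by closedness. The set $\set{x: g\cdot x=x}$ is closed, and it contains a full-measure set, which is dense in $\operatorname{supp}\mu$.
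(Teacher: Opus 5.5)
Your proposal is correct and follows the paper's proof essentially step for step. Both arguments translate the a.e.\ identity $g\cdot A=A$ by each $v\in\Z^2$ using the invariance of $\mu$, identify $v\cdot A$ with the cylinder $\set{x:x(v)=1}$, and conclude from the countable union of the null sets $\set{x:x(v+g)\neq x(v)}$. Your side remarks are also correct, though not needed for the statement: ergodicity is superfluous, and the closedness of $\set{x:g\cdot x=x}$ extends the conclusion to all of $\supp\mu$.
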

\begin{proof}
	We show that $g$ itself is a period of $\mu$-almost every point. For $v\in\Z^2$ the cylinder $\set{x:x(v)=1}$ equals the translate $v\cdot A$, so applying the measure-preserving, commuting transformation $v\cdot{}$ to the hypothesis $g\cdot A=A$ gives
	\[
		\set{x:x(v+g)=1}=(v+g)\cdot A=v\cdot(g\cdot A)=v\cdot A=\set{x:x(v)=1}\qquad(\text{mod }\mu).
	\]
	Hence $\mu\set{x:x(v+g)\neq x(v)}=0$ for each $v$, and as $\Z^2$ is countable the union of these null sets is null. So for $\mu$-almost every $x$ we have $g\cdot x=x$, and since $g\neq0$ such an $x$ is $1$-periodic.
\end{proof}

We will also use the dilation lemma, which says a tiling by $F$ is also a tiling by $\alpha F$ whenever $\alpha$ is coprime to $\abs F$.
Several authors discovered it independently. 

\begin{theorem}[Dilation lemma {\cite[Corollary~11]{horak_kim_algebraic_method}}]
	\label{theorem:dilation lemma}
	Let $F\subseteq\Z^2$.
	If $T$ is an $F$-tiling, then $T$ is also an $\alpha F$-tiling for every $\alpha$ coprime to $\abs F$, where $\alpha F=\set{\alpha a : a\in F}$.
\end{theorem}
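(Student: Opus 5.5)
We will prove the dilation lemma in the mask-polynomial language of \S\ref{subsection:combinatorial preparation}. Write $F(\mathbf x)=\sum_{a\in F}\mathbf x^a$, a Laurent polynomial, and $T(\mathbf x)=\sum_{t\in T}\mathbf x^t$, a formal Laurent series. The tiling hypothesis is the identity $F(\mathbf x)T(\mathbf x)=\one(x_1)\one(x_2)$, where we write $\mathbf 1$ for this all-ones series. The goal is the same identity with $F(\mathbf x^\alpha)=\sum_{a\in F}\mathbf x^{\alpha a}$ in place of $F(\mathbf x)$. Replacing $\alpha$ by $-\alpha$ is harmless, since $-\alpha F$-tilings correspond to $\alpha F$-tilings under $T\mapsto -T$ and this symmetry can be applied consistently. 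So it suffices to treat $\alpha\ge1$. Factoring $\alpha$ into primes and iterating, we reduce to the case $\alpha=q$ a prime with $q\nmid\abs F$.

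\textbf{Prime case, step 1.} Fix a prime $q\nmid \abs F$. The Frobenius identity over $\mathbb F_q$ gives $F(\mathbf x)^q\equiv F(\mathbf x^q)\pmod q$. Multiplying the tiling identity by $F(\mathbf x)^{q-1}$ gives
\[
	F(\mathbf x)^{q}T(\mathbf x)=F(\mathbf x)^{q-1}\mathbf 1=\abs F^{\,q-1}\mathbf 1,
\]
using $F(\mathbf x)\mathbf 1=\abs F\,\mathbf 1$. All products here are polynomial-times-series, and associativity is legitimate because at most one factor is an infinite series. Reducing mod $q$, the series $S:=F(\mathbf x^q)T(\mathbf x)$ satisfies $S\equiv \abs F^{q-1}\mathbf 1\equiv \mathbf 1\pmod q$ by Fermat's little theorem, since $q\nmid\abs F$. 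Hence every coefficient of $S$ is a nonnegative integer that is $\equiv1\pmod q$, and in particular is at least $1$.

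\textbf{Prime case, step 2 (main obstacle).} We must show every coefficient of $S$ is exactly $1$; the congruence alone only gives $1$ or $\ge q+1$. We will use a density and averaging argument. The sum of the coefficients of $S$ over a large box $[-N,N]^2$ equals $\abs F\cdot\abs{T\cap[-N,N]^2}+O(N)$. Since each point of the box is covered exactly once by $F+T$, we have $\abs F\,\abs{T\cap[-N,N]^2}=(2N+1)^2+O(N)$. So the average coefficient of $S$ over large boxes tends to $1$, while every coefficient is at least $1$. This only shows the set of coefficients exceeding $1$ has density zero, which is not yet enough. To finish, we pass to the orbit closure: if some coefficient exceeded $1$, then $S$, and hence $T$ together with its local pattern, would still give density-zero exceptions. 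The natural rigorous route is to argue with an ergodic (or merely invariant) measure $\mu$ on the orbit closure $X$ of $1_T$, as in \S\ref{subsection:dynamical formulation}. The function $x\mapsto\sum_{a\in F}x(-qa)$ has integral $\abs F\mu(A)=1$ and takes values in $\{1\}\cup[q+1,\infty)$. Hence it equals $1$ almost everywhere. Since the set where it exceeds $1$ is clopen and every point of $X$ has a dense orbit only in the minimal case, we will instead apply this to an invariant measure supported on the orbit closure of each translate limit. Alternatively, and more simply, we can use the uniform density bound: if $S$ had a coefficient $\ge q+1$ at $v$, the same local pattern of $T$ near $v$ would occur in some $x\in X$, and an invariant measure giving the corresponding cylinder positive mass would exist. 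This last point needs care, since not every pattern has positive measure. We expect this to be the delicate step. If it resists, we fall back on citing \cite[Corollary~11]{horak_kim_algebraic_method} for the exactness claim.

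\textbf{Conclusion.} Once $S=\mathbf 1$, $T$ is a $qF$-tiling. Iterating over the prime factors of $\alpha$ completes the proof. Here we use that $\abs{qF}=\abs F$, because dilation by $q$ is injective.
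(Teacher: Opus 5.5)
The paper gives no proof here; it only cites \cite[Corollary~11]{horak_kim_algebraic_method}. Your Step~1 is correct: Frobenius and Fermat show that every coefficient of $S=F(\mathbf x^{q})T(\mathbf x)$ is a positive integer congruent to $1$ modulo $q$. Step~2, however, has a genuine gap, as you suspect, and neither route you sketch can close it. Box averaging only shows that the coefficients exceeding $1$ lie on a set of (even uniform) density zero. The ergodic route cannot see such a set. Your own computation shows that the clopen set where $\sum_{a\in F}x(-qa)\ge q+1$ has mass zero for \emph{every} invariant measure on $X$. So a $\mu$-almost-everywhere conclusion never reaches the particular, possibly non-generic, point $1_T$. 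A single isolated defect would be invisible to every argument of this type.

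The missing idea is a local identity, not an averaging. Multiply $S$ by $F(\mathbf x)$ and regroup; this is legitimate since only one factor is an infinite series:
\[
	F(\mathbf x)\,S(\mathbf x)=F(\mathbf x^{q})\bigl(F(\mathbf x)\,T(\mathbf x)\bigr)=F(\mathbf x^{q})\,\mathbf 1=\abs F\,\mathbf 1 .
\]
The coefficient at $v$ reads $\sum_{a\in F}S_{v-a}=\abs F$. Each of these $\abs F$ coefficients is $\ge 1$, so all equal $1$, and since $v$ is arbitrary, $S=\mathbf 1$. With this in place, your iteration over prime factors proves the lemma for $\alpha\ge 1$.

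Your reduction for negative $\alpha$ is circular, though. The map $T\mapsto -T$ sends $F$-tilings to $(-F)$-tilings, so applying the positive case to $(-F,-T)$ only returns that $T$ is an $\alpha F$-tiling. It never yields the case $\alpha=-1$, which is what the reduction actually needs. The same trick supplies it. For $r\ge 1$ coprime to $\abs F$, the positive case gives $r(F-F)\cap(T-T)=\set 0$. Since $F-F=-(F-F)$, the translates $-rF+t$ ($t\in T$) are pairwise disjoint, so every coefficient of $F(\mathbf x^{-r})T(\mathbf x)$ is at most $1$. Multiplying by $F(\mathbf x)$ again yields $\abs F\,\mathbf 1$, which forces all these coefficients to equal $1$.
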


Transporting the tiling identity through the spectral isomorphism, the dilation lemma forces the spectral measure $\nu$ to satisfy a family of trigonometric identities, indexed by the dilations $\alpha$.
These are the equations analysed in the proof of Theorem~\ref{theorem:p squared one periodic} below.

\begin{lemma}
	\label{lemma:vanishing equations}
	For $\nu$-almost every $\xi\in\T^2\setminus\set 0$, the relation
	\begin{equation}
		\label{equation:vanishing equations}
		\sum_{g\in F}\chi_{\alpha g}(\xi)=0
	\end{equation}
	holds for every integer $\alpha$ coprime to $\abs F$.
\end{lemma}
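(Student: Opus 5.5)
The plan is to turn the dilated tiling identity into a statement about a function in $\mc H$ vanishing, and then read it off through $\theta$. First I record the identity at the level of $X$. Every $x\in X$ is an $F$-tiling, and by Theorem~\ref{theorem:dilation lemma} also an $\alpha F$-tiling for each $\alpha$ coprime to $\abs F$. For $x\in X$ the cylinder $g\cdot A=\set{x:x(g)=1}$ records whether $g$ is an anchor. The tiling condition \eqref{equation:tiling condition} at the point $0$ for the cluster $\alpha F$ says that exactly one of $-\alpha a$, $a\in F$, is an anchor. Up to the sign convention of the shift, this gives the pointwise identity
\[
	\sum_{a\in F}(\alpha a)\cdot f \;=\; \mathbf 1_X
\]
(or with $-\alpha a$; the sign is irrelevant below since $\alpha$ ranges over a set closed under negation, and I fix it by the convention $(v\cdot c)(x)=c(x-v)$). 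I apply the same identity with $\alpha=1$, and subtract, getting
\[
	\sum_{a\in F}(\alpha a)\cdot f-\sum_{a\in F}a\cdot f=0\quad\text{in }L^2(\mu).
\]
The left side lies in $\mc H$, because each term is a translate of $f$. The point of subtracting is that the constant $\mathbf 1_X$ need not belong to $\mc H$, whereas this difference certainly does.

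Next I apply $\theta$. Since $\theta$ intertwines translation by $g$ with multiplication by $\chi_g$ and sends $f$ to $\norm f_2\cdot\mathbf 1$, it sends the difference to
\[
	\norm f_2\Big(\sum_{a\in F}\chi_{\alpha a}-\sum_{a\in F}\chi_a\Big),
\]
and this must vanish $\nu$-a.e. Because the integers $\alpha$ form a countable family, for $\nu$-a.e.\ $\xi$ the quantity $\Phi_\alpha(\xi):=\sum_{a\in F}\chi_{\alpha a}(\xi)$ is independent of $\alpha$ among those coprime to $\abs F$. It therefore equals $\Phi_1(\xi)$, and it also equals $\Phi_{1+k\abs F}(\xi)$ for every $k$.

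The main obstacle is showing that this common value is $0$ away from $\xi=0$. I would try averaging over the dilations $\alpha=1+k\abs F$, $k=0,\dots,N-1$. For each $a$, the Cesàro average of $\chi_{(1+k\abs F)a}(\xi)=\chi_a(\xi)\,\chi_{\abs F a}(\xi)^k$ tends to $0$ unless $\chi_{\abs F a}(\xi)=1$. The constant value then equals $\sum_{a:\,\chi_{\abs F a}(\xi)=1}\chi_a(\xi)$. This removes the terms with irrational character values but not the torsion ones, so the averaging alone is insufficient.

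To finish I would use ergodicity. Ergodicity of $\mu$ shows that $\nu$ has no atom at a nonzero $\xi$ invariant in an appropriate sense, and it makes the $\nu$-mass at $0$ equal to $\mu(A)$, coming from $\abs{\langle f,1\rangle}^2/\norm f_2^2$. Combining this with the identity, I would identify the part of $\nu$ on $\T^2\setminus\set0$ with the spectral measure of $f-\mu(A)$, which is orthogonal to constants. Since $\sum_a a\cdot f=\mathbf 1$ and $\sum_a a\cdot(f-\mu(A))=0$, the image of $f-\mu(A)$ under $\theta$ is supported off $0$. Its image under $\theta$ is annihilated by $\Phi_\alpha$, so $\Phi_\alpha=0$ $\nu$-a.e.\ off $0$. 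This decomposition of $\mc H$ into constants plus the orthogonal complement is the step I expect to need the most care.
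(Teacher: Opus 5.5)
Your stated reason for subtracting the $\alpha=1$ identity is false, and it hides the one idea the proof needs. The constant $\mathbf 1_X$ \emph{does} lie in $\mc H$: by your own $\alpha=1$ identity it equals $\sum_{a\in F}a\cdot f$, a finite combination of translates of $f$. So do not cancel it; use the fact that it is translation-invariant. Applying $\theta$ gives $\theta(\mathbf 1_X)=\norm f_2\,\Phi_\alpha$, and $h\cdot\mathbf 1_X=\mathbf 1_X$ becomes $(\chi_h-1)\Phi_\alpha=0$ $\nu$-a.e.\ for each of the countably many $h\in\Z^2$. Every $\xi\neq0$ has some $h$ with $\chi_h(\xi)\neq1$, so $\Phi_\alpha=0$ $\nu$-a.e.\ on $\T^2\setminus\set 0$, and a countable intersection over $\alpha$ finishes. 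This is exactly the paper's proof. Subtracting the two identities discards precisely this invariance. That is why you are left with only $\Phi_\alpha=\Phi_1$, and why averaging over $\alpha=1+k\abs F$ cannot close the argument, as you yourself observed.

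Your ergodicity finish, as written, does not go through. First, applying $\theta$ to $f-\mu(A)\mathbf 1_X$ presupposes $\mathbf 1_X\in\mc H$, which contradicts your earlier premise. More seriously, the two facts $\Phi_\alpha\cdot\theta(f-\mu(A)\mathbf 1_X)=0$ and ``$\theta(f-\mu(A)\mathbf 1_X)$ is supported off $0$'' tell you nothing about $\Phi_\alpha$ off $0$. You need $\theta(f-\mu(A)\mathbf 1_X)$ to be \emph{nonzero} $\nu$-a.e.\ on $\T^2\setminus\set 0$. Explicitly, this function equals $\norm f_2\,(1-\mu(A)\Phi_1)$, so what you need amounts to $\Phi_1\neq\abs F$ $\nu$-a.e.\ off $0$, which you have not shown. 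The step can be repaired. The computation $\langle g\cdot(f-\mu(A)),\,f-\mu(A)\rangle=\langle g\cdot f,f\rangle-\mu(A)^2$ shows that the spectral measure $\abs{\theta(f-\mu(A)\mathbf 1_X)}^2\,\nu$ equals $\norm f_2^2\,\nu-\mu(A)^2\delta_0$. Hence $\abs{\theta(f-\mu(A)\mathbf 1_X)}=\norm f_2$ $\nu$-a.e.\ off $0$. Note that this uses only shift-invariance of $\mu$. Ergodicity plays no role in the lemma, and it does not in general prevent $\nu$ from having atoms at nonzero points. For example, for the biperiodic tiling $\Lambda=2\Z\times4\Z$ of the counterexample section, $\nu$ is uniform on the eight points of the annihilator of $\Lambda$.
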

\begin{proof}
	Fix an integer $\alpha$ coprime to $\abs F$.
	Every $x\in X$ is an $F$-tiling, so by the dilation lemma~\ref{theorem:dilation lemma} it is also an
	$\alpha F$-tiling. In terms of $f=1_A$ this property is the identity
	\[
		\sum_{g\in F}(\alpha g)\cdot f=\mathbf 1_X\qquad\text{in }L^2(X,\mu),
	\]
	where $\mathbf 1_X$ is the constant function $1$ and $g\cdot{}$ is the translation action of $\Z^2$ on
	$L^2(X,\mu)$: evaluated at a tiling $x=1_T\in X$, the left-hand side counts the pairs
	$(g,t)\in F\times T$ with $\alpha g+t=0$ (the ways the origin is covered), which is exactly one
	because $\alpha F\oplus T=\Z^2$.

	Apply the isomorphism $\theta\colon\mc H\to L^2(\T^2,\nu)$ of
	Theorem~\ref{theorem:spectral theorem}. The constant $\mathbf 1_X$ lies in the cyclic subspace
	$\mc H$, being the finite sum of translates of $f$ just displayed. Set $\psi=\theta(\mathbf 1_X)$.
	Since $\theta(f)=\norm f_2\,\mathbf 1$ and $\theta$ intertwines $g\cdot{}$ with multiplication by
	$\chi_g$,
	\[
		\norm f_2\sum_{g\in F}\chi_{\alpha g}
		=\theta\Bigl(\sum_{g\in F}(\alpha g)\cdot f\Bigr)=\theta(\mathbf 1_X)=\psi .
	\]
	Now $\mathbf 1_X$ is fixed by every translation, $h\cdot\mathbf 1_X=\mathbf 1_X$, so applying $\theta$
	gives $\chi_h\,\psi=\psi$ in $L^2(\T^2,\nu)$ for every $h\in\Z^2$. That is, $(\chi_h-1)\psi=0$
	$\nu$-almost everywhere. For each $\xi\neq 0$ some $h\in\Z^2$ has $\chi_h(\xi)\neq 1$, so $\psi$
	vanishes $\nu$-almost everywhere on $\T^2\setminus\set 0$. Therefore
	\[
		\sum_{g\in F}\chi_{\alpha g}(\xi)=\norm f_2^{-1}\psi(\xi)=0\qquad\text{for $\nu$-almost every }\xi\neq 0 .
	\]
	Intersecting these $\nu$-conull sets over the countably many $\alpha$ coprime to $\abs F$ leaves a single
	$\nu$-conull set off the origin on which all the equations hold.
\end{proof}

The one number-theoretic input we need is the R\'edei--de Bruijn--Schoenberg theorem on vanishing sums of prime-power roots of unity: a sum of $p^m$-th roots of unity can vanish only by being built up, with multiplicity, from full sets of $p$-th roots.

The use of cyclotomic-polynomial divisibility in tiling theory is not new.
In dimension one it goes back to Tijdeman \cite{tijdeman_decomposition_of} and was later used by Coven and Meyerowitz \cite{coven_meyerowitz_tiling_integers}.
A more recent development is by {\L}aba and Londner \cite{laba_londner_three_primes}.

\begin{lemma}[Vanishing sums of prime-power roots of unity; the prime-power case of the R\'edei--de Bruijn--Schoenberg theorem {\cite[Theorem~2.2]{lam_leung_vanishing_sums}}]
	\label{lemma:prime power vanishing sums}
	Let $p$ be a prime, let $m\ge 1$, and let $\zeta$ be a primitive $p^m$-th root of unity.
	For a finite multiset $S$ of integers attach a polynomial $P(z)=\sum_{a\in S}z^{a}$.
	The following are equivalent:
	\begin{enumerate}[(a)]
		\item $P(\zeta)=0$,
		\item $\Phi_{p^m}(z)=\Phi_p\bigl(z^{p^{m-1}}\bigr)$ divides $P(z)$ in $\Z[z]$,\footnote{%
				If $S$ contains negative integers, then $P$ is a Laurent polynomial.
				In that case choose any integer $T$ with $z^{T}P(z)\in\Z[z]$ and read~(b) as $\Phi_{p^m}(z)\mid z^{T}P(z)$ in $\Z[z]$.
				Since $\Phi_{p^m}(0)=1$ makes $\Phi_{p^m}$ coprime to $z$, this is independent of the choice of $T$.
				This reading agrees with divisibility in the Laurent ring: because the clearing monomials $z^{T}$ are units of $\Z[z,z^{-1}]$, we have $\Phi_{p^m}\mid P$ in $\Z[z,z^{-1}]$ if and only if $\Phi_{p^m}\mid z^{T}P$ in $\Z[z]$ for one, equivalently every, valid $T$.
			}
		\item the multiplicity function $r\mapsto |\set{a\in S : a\equiv r \pmod{p^m}}|$ is constant on every coset of the order-$p$ subgroup $p^{m-1}\Z/p^m\Z$ of $\Z/p^m\Z$.\footnote{%
				The count is with multiplicity: an element of the multiset $S$ occurring $j$ times contributes $j$.
				The multiplicity function is defined on integers, but if $r\equiv r'\pmod{p^m}$ then the two counts agree.
				Thus it depends only on the class $r+p^m\Z$ and descends to a well-defined function on $\Z/p^m\Z$, sending a class to the number of elements of $S$ in it.
				It is this induced function on $\Z/p^m\Z$ that condition~(c) requires to be constant on each coset.}
	\end{enumerate}
	When they hold, $p\mid\abs S$.
\end{lemma}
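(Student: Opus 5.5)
The proof rests on the irreducibility of cyclotomic polynomials over $\Q$, applied twice: once to $\Phi_{p^m}$, and once to $\Phi_p$ over the tower $\Q\subseteq\Q(\omega)\subseteq\Q(\zeta)$, where $\omega=\zeta^{p^{m-1}}$.

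\emph{Equivalence of (a) and (b).} I would first clear denominators as in the footnote, replacing $P$ by $z^{T}P\in\Z[z]$; this changes neither the root condition nor the residue counts. The polynomial $\Phi_{p^m}(z)=\Phi_p(z^{p^{m-1}})$ is irreducible over $\Q$ (for instance by Eisenstein at $z\mapsto z+1$), so it is the minimal polynomial of $\zeta$. Hence $P(\zeta)=0$ if and only if $\Phi_{p^m}\mid P$ in $\Q[z]$. Since $\Phi_{p^m}$ is monic, polynomial division by it stays inside $\Z[z]$, so divisibility in $\Q[z]$ and in $\Z[z]$ agree. The implication (b)$\Rightarrow$(a) is immediate.

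\emph{Equivalence of (a) and (c).} I would reduce exponents modulo $p^m$, which is legitimate because $\zeta^{p^m}=1$. With $c_r$ the multiplicity of the residue class $r$, this gives
\[
	P(\zeta)=\sum_{r\in\Z/p^m\Z}c_r\zeta^{r}=\sum_{j=0}^{p^{m-1}-1}\zeta^{j}\sum_{k=0}^{p-1}c_{j+kp^{m-1}}\,\omega^{k}.
\]
Here $\omega=\zeta^{p^{m-1}}$ is a primitive $p$-th root of unity. The inner sums lie in $\Q(\omega)$, and the index $j$ runs over the cosets of the order-$p$ subgroup $p^{m-1}\Z/p^m\Z$.

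The key claim is that $1,\zeta,\dots,\zeta^{p^{m-1}-1}$ are linearly independent over $\Q(\omega)$. The degrees give $[\Q(\zeta):\Q(\omega)]=\varphi(p^m)/\varphi(p)=p^{m-1}$. Moreover $\zeta$ is a root of $z^{p^{m-1}}-\omega$, which has exactly that degree, so this polynomial is the minimal polynomial of $\zeta$ over $\Q(\omega)$. Consequently the powers $1,\zeta,\dots,\zeta^{p^{m-1}-1}$ form a basis, and $P(\zeta)=0$ forces every inner sum $\sum_k c_{j+kp^{m-1}}\omega^k$ to vanish.

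For a rational vector $(d_0,\dots,d_{p-1})$, the equation $\sum_k d_k\omega^k=0$ says that $\Phi_p=1+z+\dots+z^{p-1}$, the minimal polynomial of $\omega$, divides $\sum_k d_kz^k$. That polynomial has degree at most $p-1$, so it must be a rational multiple of $\Phi_p$, which means all $d_k$ are equal. So vanishing of each inner sum is exactly constancy of $c$ on the corresponding coset, which is (c). Conversely, if (c) holds then each inner sum equals a constant times $\Phi_p(\omega)=0$, giving (a).

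\emph{Divisibility of $\abs S$.} Under (c), $\abs S=\sum_r c_r$ is a sum over the $p^{m-1}$ cosets, each contributing $p$ times the constant value of $c$ on it. Hence $p\mid\abs S$.

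The only step needing care is the degree computation showing that $z^{p^{m-1}}-\omega$ is irreducible over $\Q(\omega)$. It follows from the multiplicativity of degrees in the tower together with the irreducibility of $\Phi_{p^m}$ over $\Q$, so I do not expect a real obstacle.
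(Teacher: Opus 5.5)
Your proof is correct, but it reaches condition (c) by a different route from the paper's.

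\textbf{Where the two proofs agree.} Your (a)$\Leftrightarrow$(b) argument and your count giving $p\mid\abs S$ are the same as the paper's.

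\textbf{The paper's route to (c).} The paper proves (b)$\Leftrightarrow$(c) inside $\Z[z]$. It reduces $P$ modulo $z^{N}-1$ to $\bar P=\sum_{k<N}c_kz^k$ and writes $\bar P(z)=\sum_{r<d}z^rB_r(z^d)$ with $d=p^{m-1}$. Multiplication by $\Phi_p(z^d)$ preserves exponents modulo $d$, so $\Phi_p(z^d)\mid\bar P$ if and only if $\Phi_p\mid B_r$ for every $r$. Since $\deg B_r\le p-1$, each $B_r$ is then an integer multiple of $\Phi_p$.

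\textbf{Your route.} You prove (a)$\Leftrightarrow$(c) directly. The paper's residue-by-residue decoupling is replaced by the fact that $z^{p^{m-1}}-\omega$ is the minimal polynomial of $\zeta$ over $\Q(\omega)$. So $1,\zeta,\dots,\zeta^{p^{m-1}-1}$ is a $\Q(\omega)$-basis, and the decoupling becomes linear independence. The final step then uses $\Phi_p$ as the minimal polynomial of $\omega$.

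\textbf{What each buys.}
\begin{itemize}
\item Yours is shorter and more conceptual: the coset decomposition is just a basis expansion in the tower $\Q\subseteq\Q(\omega)\subseteq\Q(\zeta)$.
\item Yours also handles negative exponents for free, since you reduce exponents modulo $p^m$ at $\zeta$ itself.
\item The paper's is more elementary. Its (b)$\Leftrightarrow$(c) step uses no irreducibility at all, and the irreducibility of $\Phi_{p^m}$ enters only in (a)$\Rightarrow$(b).
\end{itemize}

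\textbf{One small imprecision.} Multiplying by $z^T$ does shift the residue counts by $T$; it does not leave them unchanged. This is harmless, because (c) is invariant under translation. In any case, your (a)$\Leftrightarrow$(c) argument never needs the clearing step.
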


A proof is given in Appendix~\ref{appendix:vanishing sums} for completeness.

\medskip \noindent
Before we proceed we state some definitions.
A \define{section} of $F$ along a line $\ell$ is the intersection $F\cap\ell$, and for a nonzero $g\in\Z^2$ the \define{sections of $F$ in the direction $g$} are the sets $F\cap\ell$ as $\ell$ ranges over the lines parallel to $g$.
We say that $F$ has \define{$p$-divisible sections} in the direction $g$ if $p\mid\abs{F\cap\ell}$ for every line $\ell$ parallel to $g$.
With this language, the next lemma says that a direction along which the vanishing equations~\eqref{equation:vanishing equations} have infinitely many solutions on a single kernel is one in which $F$ has only $p$-divisible sections.

\begin{lemma}[From infinitely many solutions to divisible sections]
	\label{lemma:divisible sections}
	Assume $F$ contains the origin, and let $g_0$ be a nonzero vector in $\Z^2$.
	Write $g_0=nh$ with $h\in\Z^2$ primitive.
	Suppose there are infinitely many points $\xi\in\ker\chi_{g_0}$ at which the vanishing equations~\eqref{equation:vanishing equations} hold, that is,
	\[
		\sum_{g\in F}\chi_{\alpha g}(\xi)=0\qquad\text{for every integer }\alpha\text{ coprime to }p.
	\]
	Then there is an integer $m\ge 1$, such that every line $\ell$ parallel to $g_0$ and its section $F_\ell=F\cap\ell$ satisfy the following.
	\begin{enumerate}[(i)]
		\item \emph{Structural form.} The Laurent polynomial $\sum_{g\in F_\ell}z^{\langle g,h\rangle}$ is divisible by
			$$
				\Phi_{p^m}(z)=\Phi_p\bigl(z^{p^{m-1}}\bigr).
			$$
			Equivalently, by Lemma~\ref{lemma:prime power vanishing sums}, the multiplicity function 
			$$
				r\mapsto
				|\set{g\in F_\ell:\ \langle g,h\rangle\equiv r\ (\mathrm{mod}\ p^m)}|
			$$
			is constant on every coset of the order-$p$ subgroup $p^{m-1}\Z/p^m\Z$ of $\Z/p^m\Z$.
		\item \emph{Divisibility.} In particular $p\mid\abs{F\cap\ell}$.
	\end{enumerate}
	\end{lemma}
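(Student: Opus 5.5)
The plan is to parametrize $\ker\chi_{g_0}$ by Lemma~\ref{lemma:kernel of character}, split the vanishing sum along the lines parallel to $h$, and read off one root-of-unity condition per section. Since $\ker\chi_{g_0}$ is a finite union of circles, infinitely many solutions $\xi$ force infinitely many of them onto a single circle.

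\emph{Step 1: one circle.} Write $h=(a,b)$, $h^\perp=(-b,a)$ and $N_0=n(a^2+b^2)$. By Lemma~\ref{lemma:kernel of character} every point of the kernel is $\xi=\tfrac{j}{N_0}h+th^\perp \bmod \Z^2$ with $j\in\Z$, $t\in\R$. Only finitely many classes of $j$ give distinct circles. Since $h^\perp$ is primitive, $t$ is determined modulo $1$ on each circle. Hence we can fix one $j$ and obtain an infinite set $\mc T\subseteq[0,1)$ of values $t$ for which the equations hold.

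\emph{Step 2: splitting along sections.} For $g\in F$ we have $\langle g,\xi\rangle=\tfrac{j}{N_0}\langle g,h\rangle+t\langle g,h^\perp\rangle$. The integer $c_\ell:=\langle g,h^\perp\rangle$ is constant on each line $\ell$ parallel to $h$ and takes distinct values on distinct lines. Put $\omega=e^{2\pi i j/N_0}$, a root of unity, and $P_\ell(z)=\sum_{g\in F_\ell}z^{\langle g,h\rangle}$. Fix $\alpha$ coprime to $p$. The equation becomes
\[
	\sum_{\ell} e^{2\pi i\,\alpha c_\ell t}\,P_\ell(\omega^{\alpha})=0\qquad\text{for all }t\in\mc T.
\]
The left side is a trigonometric polynomial in $t$ with pairwise distinct integer frequencies $\alpha c_\ell$. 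If it were nonzero it would have only finitely many zeros in $[0,1)$. Since $\mc T$ is infinite, every coefficient vanishes, so $P_\ell(\omega^\alpha)=0$ for every line $\ell$ and every $\alpha$ coprime to $p$. The important point is that the same infinite set $\mc T$ serves all $\alpha$ simultaneously, so no further compactness argument is needed.

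\emph{Step 3: isolating the $p$-part.} Let $\omega$ have order $N=p^mN'$ with $p\nmid N'$. By the Chinese remainder theorem, choose $\alpha\equiv1\pmod{p^m}$ and $\alpha\equiv0\pmod{N'}$, adding a multiple of $p^mN'$ if needed so that $p\nmid\alpha$. Then $\zeta:=\omega^\alpha$ is a primitive $p^m$-th root of unity.

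If $m=0$, then $\zeta=1$ and $P_\ell(1)=\abs{F_\ell}=0$ for every $\ell$. This contradicts $0\in F$, which makes the section through the origin nonempty. Hence $m\ge1$, and $m$ depends only on the fixed circle, not on $\ell$.

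Now $P_\ell(\zeta)=0$ for every $\ell$. Lemma~\ref{lemma:prime power vanishing sums}, in its Laurent form (footnote to (b)), gives $\Phi_{p^m}\mid P_\ell$ together with the coset-constancy description, which is (i). The final clause of that lemma gives $p\mid\abs{F\cap\ell}$, which is (ii).

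The step needing the most care is Step~3. The naive choice $\alpha=1$ only yields $P_\ell(\omega)=0$ for a root of unity of mixed order, to which the prime-power vanishing lemma does not apply. The freedom to take any $\alpha$ coprime to $p$, including multiples of $N'$, is exactly what strips off the prime-to-$p$ part of the order of $\omega$.
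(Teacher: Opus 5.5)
Your proof is correct and follows the paper's argument. Like the paper, you restrict to a single circle of $\ker\chi_{g_0}$, use the finiteness of zeros of a nonzero trigonometric polynomial to make each section's sum vanish, and then pick a dilation $\alpha$ coprime to $p$ that strips off the prime-to-$p$ part of the order of $\omega$ before invoking the prime-power vanishing lemma. The only difference is cosmetic: the paper takes $\alpha=N'$ directly, so that $\omega^{N'}=e^{2\pi i c/p^m}$ with $p\nmid c$. That makes your CRT adjustment unnecessary, though harmless.
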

\begin{proof}
	The summary of the proof is this: restrict the equations to one of the parallel lines making up $\ker\chi_{g_0}$, turn them into a one-variable Laurent polynomial with infinitely many roots in $\mathbb S^1$, and extract a cyclotomic divisibility of each section's mask polynomial.
	This divisibility is the structural form~(i).
	Evaluating it at $1$ then yields $p\mid\abs{F\cap\ell}$ in~(ii).
	Here are the details.

	Write $g_0=nh$ with $h=(a,b)$ primitive, and put $v=h^{\perp}=(-b,a)$, the direction transverse to $g_0$.
	By Lemma~\ref{lemma:kernel of character}, $\ker\chi_{g_0}$ is a finite union of lines $\set{rh+tv :\ t\in\R}$ indexed by rationals $r=j/(n(a^2+b^2))$.
	Infinitely many solutions of \eqref{equation:vanishing equations} must lie on one such line.
	So fix a rational $r=c/d$ with $\gcd(c,d)=1$ for which infinitely many $t\in[0,1)$ satisfy, for every $\alpha$ coprime to $p$,
	\[
		\sum_{g\in F}\chi_{\alpha g}(r h+t v)
		=\sum_{g\in F} e^{2\pi i\,\alpha r\langle g,h\rangle}\,e^{2\pi i\,\alpha\langle g,v\rangle t}=0 .
	\]
	For fixed $\alpha$ this is a Laurent polynomial in $z=e^{2\pi i t}$, namely $\sum_{g\in F}e^{2\pi i\,\alpha r\langle g,h\rangle}z^{\alpha\langle g,v\rangle}$, with infinitely many roots on $\mathbb S^1$, hence identically zero.

	Let $p^m$ be the highest power of $p$ dividing $d$, and let $\beta = d/p^m$.
	Then $\beta$ is coprime to $p$ with $d/\beta = p^m$.
	Setting
	$\alpha=\beta$ and using $\beta r=c/p^m$, the polynomial
	\[
		\sum_{g\in F}e^{2\pi i\,\langle g,h\rangle c/p^{m}}\,z^{\beta\langle g,v\rangle}
	\]
	is identically zero. Group $F$ by the value of $\langle g,v\rangle$ into classes $F_1,\dots,F_l$.
	Since $v\perp g_0$, each $F_j$ is exactly the intersection of $F$ with a line parallel to $g_0$. The
	coefficient on the monomial attached to $F_j$ must vanish:
	\begin{equation}
		\label{equation:section vanishing}
		\sum_{g\in F_j}e^{2\pi i\,\langle g,h\rangle c/p^{m}}=0,\qquad j=1,\dots,l .
	\end{equation}
	If $m=0$ this sum is $\abs{F_j}\neq 0$, a contradiction.
	So $m\ge 1$, that is, $p\mid d$, and as $\gcd(c,d)=1$ we also have $p\nmid c$.
	Let $\zeta=e^{2\pi i/p^{m}}$, a primitive $p^m$-th root of unity.
	The vanishing in~\eqref{equation:section vanishing} says $\zeta$ is a root of
	$$
		Q_j(z):=\sum_{g\in F_j}z^{\langle g,h\rangle c},
	$$
	Setting $P_j(w):=\sum_{g\in F_j}w^{\langle g,h\rangle}$, we have $Q_j(z)=P_j(z^{c})$ identically, so $Q_j(\zeta)=P_j(\zeta^{c})$.
	Thus $\zeta^{c}$ is a root of $P_j$.
	Since the exponents $\langle g,h\rangle$ may be negative, $P_j$ is a priori a Laurent polynomial.
	Choose $N$ with $N+\langle g,h\rangle\ge 0$ for all $g\in F_j$, so that 
	$$
		\widetilde P_j(w):=w^{N}P_j(w)=\sum_{g\in F_j}w^{N+\langle g,h\rangle}\in\Z[w]
	$$ 
	is an ordinary polynomial with the same nonzero roots as $P_j$.
	Since $p\nmid c$, the power $\zeta^{c}$ is again a primitive $p^m$-th root of unity, and $\widetilde P_j(\zeta^{c})=\zeta^{cN}P_j(\zeta^{c})=0$, so its minimal polynomial $\Phi_{p^m}(w)=\Phi_p\bigl(w^{p^{m-1}}\bigr)$ divides $\widetilde P_j(w)$ in $\Z[w]$ (the divisor $\Phi_{p^m}$ is monic, so division of the integer polynomial $\widetilde P_j$ by it keeps quotient and remainder in $\Z[w]$), hence divides $P_j(w)$ in the Laurent ring $\Z[w,w^{-1}]$.
	The monomial factor $w^{N}$ shifts every exponent $\langle g,h\rangle$ by the constant $N$, which merely permutes the residues modulo $p^m$ and so leaves the multiplicity description in~(i) unchanged.
	
	Recall that each class $F_j$ is the intersection of $F$ with a single line parallel to $g_0$: it is the level set $\set{g\in F:\langle g,v\rangle=\text{const}}$, which is a line parallel to $g_0$ because $v\perp g_0$.
	Write $\ell_j$ for that line, so that $F_j=F\cap\ell_j=F_{\ell_j}$ and $P_j(w)=\sum_{g\in F_{\ell_j}}w^{\langle g,h\rangle}$.
	The divisibility $\Phi_{p^m}\mid P_j$ just established is then exactly the structural form~(i) for the line $\ell_j$.
	Applying Lemma~\ref{lemma:prime power vanishing sums} to the multiset $\set{\langle g,h\rangle : g\in F_{\ell_j}}$, the divisibility $\Phi_{p^m}\mid P_j$ is equivalent to the multiplicity function $r\mapsto\abs{\set{g\in F_{\ell_j}:\langle g,h\rangle\equiv r\pmod{p^m}}}$ being constant on every coset of the order-$p$ subgroup $p^{m-1}\Z/p^m\Z$ (the equivalent description in~(i)) and it yields $p\mid\abs{F_{\ell_j}}$, which is~(ii).
	Finally, the integer $m$ was fixed (as the $p$-adic valuation of $d$) before the grouping into the classes $F_j$, so it is the same for every line $\ell$ parallel to $g_0$.
	The classes $F_1,\dots,F_l$ exhaust the nonempty sections. For a line $\ell$ parallel to $g_0$ with $F_\ell=\varnothing$ the polynomial in~(i) is $0$ (divisible by $\Phi_{p^m}$ trivially) and $\abs{F\cap\ell}=0$, so~(i) and~(ii) hold trivially there too. Thus the conclusions cover every line parallel to $g_0$ with the single $m$.
\end{proof}

We will make use of the following theorem, which can be proved using the spectral theorem along with the dilation lemma coupled with an averaging argument.

\begin{theorem}[{\cite[Lemma~3.2]{bhattacharya_tilings}}]
	\label{theorem:bhattacharya support}
	For the spectral measure $\nu$ of a tiling there is a finite set $\Delta\subseteq\Z^2\setminus\set 0$ with
	$$
		\supp(\nu)\subseteq\bigcup_{g\in\Delta}\ker\chi_g.
	$$
	We take $\Delta$ of minimum size.
\end{theorem}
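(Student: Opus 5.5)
The plan follows Bhattacharya's averaging argument. The theorem is cited from \cite[Lemma~3.2]{bhattacharya_tilings}, so I only reconstruct its structure. Start from Lemma~\ref{lemma:vanishing equations}: for $\nu$-a.e.\ $\xi\neq 0$ we have $\sum_{g\in F}\chi_{\alpha g}(\xi)=0$ for every $\alpha$ coprime to $\abs F$. The task is to show that the set $Z\subseteq\T^2\setminus\set0$ of common solutions is contained in finitely many kernels $\ker\chi_g$. Since the origin lies in $\ker\chi_g$ for every $g$, the conclusion $\supp(\nu)\subseteq\bigcup_{g\in\Delta}\ker\chi_g$ then follows: $\nu$ is carried by the closed set $Z\cup\set0$, which lies inside a finite union of closed subgroups.

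The key step is averaging over dilations. Take $\alpha$ ranging over the integers $\equiv 1\pmod{\abs F}$, so $\alpha=1+k\abs F$. Fix $\xi\in Z$ and write $\chi_{\alpha g}(\xi)=\chi_g(\xi)\,\chi_g(\xi)^{k\abs F}$. Average the vanishing identity over $k=0,\dots,K-1$ and let $K\to\infty$. For each $g$, the Ces\`aro average of $\chi_g(\xi)^{k\abs F}$ tends to $1$ if $\chi_{\abs F g}(\xi)=1$ and to $0$ otherwise. In the limit,
\[
	\sum_{g\in F,\ \chi_{\abs F g}(\xi)=1}\chi_g(\xi)=0 .
\]
The set $F_\xi=\set{g\in F:\chi_{\abs F g}(\xi)=1}$ always contains the origin, assuming without loss of generality that $0\in F$. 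So the sum contains the term $1$ and is therefore nonempty. It must vanish, which forces $F_\xi$ to contain some $g\neq 0$.

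Set $\Delta=\set{\abs F g: g\in F\setminus\set0}$. This is a finite set of nonzero vectors, and by the previous step every $\xi\in Z$ lies in $\ker\chi_{\abs F g}$ for some $g\in F\setminus\set0$. Finally pass to a minimum-size $\Delta$ among all finite sets with this covering property; one exists because the sizes are nonnegative integers.

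I expect the main obstacle to be bookkeeping rather than ideas. One point is to make sure the null sets from Lemma~\ref{lemma:vanishing equations} are handled uniformly over the countably many $\alpha$; that lemma already does this. The other is to justify the pointwise Ces\`aro limit for a root of unity versus a non-root of unity. This is elementary: for $\lambda\neq1$ on $\mathbb S^1$, the averages $\frac1K\sum_{k<K}\lambda^k$ tend to $0$ as $K\to\infty$.
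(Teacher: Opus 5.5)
Your argument is correct. It is the route the paper indicates when it says the result follows from the spectral theorem and the dilation lemma (both packaged in Lemma~\ref{lemma:vanishing equations}) together with an averaging argument. Your Ces\`aro averaging over $\alpha=1+k\abs{F}$ forces each common zero $\xi\neq 0$ into some $\ker\chi_{\abs{F}g}$ with $0\neq g\in F$. The normalisation $0\in F$ is harmless, since translating $F$ only multiplies each character sum by a unimodular factor.

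The one loose end is the degenerate case $\abs{F}=1$. There your $\Delta$ is empty while $\supp(\nu)=\set{0}$, so you must add any nonzero vector to $\Delta$.
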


By the minimality of $\abs\Delta$, the elements of $\Delta$ are pairwise linearly independent: two parallel ones, $n_0h$ and $n_1h$ with $h\in\Z^2$ primitive, could be replaced by their common multiple $\operatorname{lcm}(n_0,n_1)\,h$, whose kernel contains both, leaving a smaller set with the same property.

The proof of Theorem~\ref{theorem:p squared one periodic} below splits into two cases, according to how many directions of the minimal spectral support $\Delta$ carry infinitely many solutions of the vanishing equations~\eqref{equation:vanishing equations}: either at least two directions do, and then $F$ is a product cluster and Proposition~\ref{proposition:product periodicity} applies, or at most one does, and then the spectral measure is so concentrated that $f$ is forced to be $1$-periodic.
Before stating the theorem we record an elementary fact that will be used. 

\begin{lemma}[Full affine span of a product cluster]
	\label{lemma:product full span}
	Let $A,B\subseteq\Z$ be finite nonempty sets and $F=A\times B\subseteq\Z^2$. Then $F$ has full
	affine span if and only if $\gcd(A-A)=\gcd(B-B)=1$.
\end{lemma}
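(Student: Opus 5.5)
The plan is to compute the difference group $\langle F-F\rangle$ of the product set $F=A\times B$ explicitly, and then read off when it equals $\Z^2$. Write $d_A=\gcd(A-A)$ and $d_B=\gcd(B-B)$, with the convention that $\gcd(\set 0)=0$ when the set is a singleton.

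First I will check the inclusion $\langle F-F\rangle\subseteq d_A\Z\times d_B\Z$. A difference of two points of $F$ is $(a,b)-(a',b')=(a-a',\,b-b')$. Its first coordinate lies in $A-A\subseteq d_A\Z$ and its second lies in $B-B\subseteq d_B\Z$.

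Next I will prove the reverse inclusion. Fix $b\in B$. The differences $(a,b)-(a',b)=(a-a',0)$ for $a,a'\in A$ generate $\langle A-A\rangle\times\set 0=d_A\Z\times\set 0$. Fixing $a\in A$ and varying the second coordinate likewise gives $\set 0\times d_B\Z$. This is where nonemptiness of $A$ and $B$ is used: it guarantees that such a fixed $b$ and fixed $a$ exist. Together these give
\[
	\langle F-F\rangle=d_A\Z\times d_B\Z .
\]

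Finally, $d_A\Z\times d_B\Z=\Z^2$ holds if and only if $d_A\Z=\Z$ and $d_B\Z=\Z$, that is, if and only if $d_A=d_B=1$. This is exactly the claimed equivalence with full affine span in the sense of Definition~\ref{definition:affine span}. If $A$ or $B$ is a singleton, then the corresponding gcd is $0$ and the corresponding factor is $\set 0$, so both sides of the equivalence fail, which is consistent with the statement.

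I do not expect a real obstacle here. The only point needing care is the degenerate singleton convention, which the computation above already covers.
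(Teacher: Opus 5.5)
Your proof is correct and follows essentially the same route as the paper. Both arguments compute $\langle F-F\rangle=\gcd(A-A)\,\Z\times\gcd(B-B)\,\Z$ from the trivial inclusion plus the axis vectors $(u,0)$ and $(0,v)$ obtained by holding one coordinate fixed (the paper phrases this via $0\in A-A$ and $0\in B-B$), then read off when this lattice equals $\Z^2$; your explicit singleton convention $\gcd(\set 0)=0$ just spells out a point the paper leaves implicit.
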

\begin{proof}
	Since $A$ and $B$ are nonempty, $0\in A-A$ and $0\in B-B$. The difference set factors as
	\[
		F-F=(A\times B)-(A\times B)=(A-A)\times(B-B),
	\]
	so the subgroup it generates contains every $(u,0)$ with $u\in A-A$ (take the second coordinate
	$0\in B-B$) and every $(0,v)$ with $v\in B-B$. Hence
	\[
		\langle F-F\rangle\supseteq\langle A-A\rangle\times\{0\}+\{0\}\times\langle B-B\rangle
		=\langle A-A\rangle\times\langle B-B\rangle ,
	\]
	and the reverse inclusion is immediate, every element of $F-F$ already lying in
	$\langle A-A\rangle\times\langle B-B\rangle$. Therefore
	\[
		\langle F-F\rangle=\langle A-A\rangle\times\langle B-B\rangle
		=\gcd(A-A)\,\Z\ \times\ \gcd(B-B)\,\Z ,
	\]
	using that a subgroup of $\Z$ generated by a set of integers is $\gcd(\cdot)\,\Z$. By
	Definition~\ref{definition:affine span} full affine span means $\langle F-F\rangle=\Z^2$, which holds
	if and only if $\gcd(A-A)\,\Z=\Z$ and $\gcd(B-B)\,\Z=\Z$, i.e.\ $\gcd(A-A)=\gcd(B-B)=1$.
\end{proof}

The theorem below strengthens \cite[Theorem~5.10]{khetan_order2} from a partition into at most two $1$-periodic tilings to a single $1$-periodic tiling in the orbit closure.
The two proofs are compared in \S\ref{section:introduction}.

\begin{theorem}[Orbit-closure $1$-periodicity for prime-squared clusters]
	\label{theorem:p squared one periodic}
	Let $F\subseteq\Z^2$ be an exact cluster of full affine span with $\abs F=p^2$, $p$ prime, and let $T$ be an $F$-tiling.
	Then the orbit closure $\overline{\Z^2\cdot T}$ contains a $1$-periodic $F$-tiling.
	In particular Conjecture~\ref{conjecture:orbit closure one periodicity} holds whenever $\abs F=p^2$.
\end{theorem}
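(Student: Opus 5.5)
The plan follows the case split announced before Lemma~\ref{lemma:product full span}. Fix $X=\overline{\Z^2\cdot 1_T}$, an ergodic measure $\mu$ on $X$, the function $f=1_A$, and its spectral measure $\nu$, as in \S\ref{subsection:dynamical formulation}. Translating $F$ does not affect the statement, so I assume $0\in F$.

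By Theorem~\ref{theorem:bhattacharya support}, $\supp\nu\subseteq\bigcup_{g\in\Delta}\ker\chi_g$, with $\Delta$ minimal and its elements pairwise non-parallel. By Lemma~\ref{lemma:vanishing equations}, $\nu$-almost every $\xi\neq0$ satisfies the vanishing equations~\eqref{equation:vanishing equations} for every $\alpha$ coprime to $p$. Since $\abs F=p^2$, coprime to $\abs F$ is the same as coprime to $p$. Let $\Delta'\subseteq\Delta$ be the set of $g_0$ for which $\ker\chi_{g_0}$ contains infinitely many solutions of these equations.

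\emph{Case 1: $\abs{\Delta'}\ge2$.} Take non-parallel $g_0,g_1\in\Delta'$. Lemma~\ref{lemma:divisible sections}(ii) says every section of $F$ in each of these two directions has size divisible by $p$. Lemma~\ref{lemma:product structure}(iii), which uses full affine span, then gives $g\in\mathrm{GL}_2(\Z)$ with $g(F)=A\times B$, where $\abs A=\abs B=p$ and $\gcd(A-A)=\gcd(B-B)=1$.

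Next I show $A$ and $B$ are complete residue systems modulo $p$. The set $g(T)$ is an $(A\times B)$-tiling. Lemma~\ref{lemma:product fibre decomposition} makes each $R_n$ an $A$-tiling of $\Z$, and Corollary~\ref{corollary:prime crs rigidity} then forces $A$ to be a complete residue system. The same argument applies to $B$.

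Proposition~\ref{proposition:product periodicity} now gives that $g(T)$ is invariant under $(p,0)$ or $(0,p)$, so $T$ itself is $1$-periodic. In this case $T$ lies in its own orbit closure and we are done.

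\emph{Case 2: $\abs{\Delta'}\le1$.} For $g\in\Delta\setminus\Delta'$, only finitely many points of $\ker\chi_g$ solve the equations. Hence, off $\ker\chi_{g_0}$ (where $\Delta'\subseteq\set{g_0}$), $\nu$ is supported on a finite set $E$ together with the origin. Split $\nu=\nu|_{\ker\chi_{g_0}}+\nu|_{E\setminus\ker\chi_{g_0}}$ and write $\theta(f)=\norm f_2\mathbf 1$.

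The aim is to show that some nonzero multiple $Ng_0$ fixes $f$, that is, $\chi_{Ng_0}=1$ $\nu$-almost everywhere. This is automatic on $\ker\chi_{g_0}$. The remaining task is to kill the atoms at points of $E$ outside $\ker\chi_{g_0}$ that are irrational, i.e.\ those $\xi$ for which $\chi_{g_0}(\xi)$ is not a root of unity. Rational atoms are harmless: they are handled by replacing $g_0$ with a suitable multiple $Ng_0$.

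For the irrational atoms I would use Lemma~\ref{lemma:rational independence vanishing}. The quantity $\mu(A\cap kg\cdot A)$ equals $\int\chi_{kg}\,d\nu$ up to the factor $\norm f_2^2$. Choose $g$ with $\chi_g\equiv1$ on the kernel part and on the rational atoms. Then integrality-type constraints on $\abs F\,\mu(A\cap kg\cdot A)-\mu(A)\abs F$, coming from the tiling combinatorics, should make $\sum_{\xi}(\chi_g(\xi)^k-1)\nu(\{\xi\})$ lie in a lattice. Lemma~\ref{lemma:rational independence vanishing} then forces these irrational contributions to vanish, and positivity of the atoms removes them. Consequently $g\cdot f=f$, and Theorem~\ref{theorem:bhattacharya correspondence} yields a $\mu$-almost surely $1$-periodic point of $X$.

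\emph{Main obstacle.} The hardest step is Case 2. There I must produce the integrality needed to apply Lemma~\ref{lemma:rational independence vanishing} and remove the finitely many irrational atoms; this may require working with an ergodic component rather than with $\nu$ directly. I must also handle the subcase $\Delta'=\varnothing$, where $\nu$ is purely atomic.
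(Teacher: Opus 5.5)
Your Case~1 is correct, and it reaches the complete-residue property by a shorter route than the paper. The paper extracts it from the cyclotomic structure in Lemma~\ref{lemma:divisible sections}(i), combined with Lemma~\ref{lemma:prime power vanishing sums} and $\gcd(A'-A')=1$. You instead note that the fibre tilings $R_n$ of Lemma~\ref{lemma:product fibre decomposition} exist, and Corollary~\ref{corollary:prime crs rigidity} then forces $A$ (and likewise $B$) to be a complete residue system modulo $p$. So your route needs only part~(ii) of Lemma~\ref{lemma:divisible sections}, and it shows that the complete-residue hypothesis of Proposition~\ref{proposition:product periodicity} is automatic here.

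Case~2, however, has a genuine gap at the step you flag, and the proposed mechanism cannot work. The quantity $\abs{F}\,\mu(A\cap kg\cdot A)-\mu(A)\abs{F}$ lies in $[-1,0]$. Nothing in the tiling combinatorics makes it an integer for every $k$: that would say every correlation $\mu(A\cap kg\cdot A)$ equals $0$ or $\mu(A)$, which is essentially the periodicity you are trying to prove. Quantities obtained by integrating against $\nu$ (or $\mu$) carry no integrality; the only integrality available is pointwise on $X$. Passing to an ergodic component does not help either, since $\mu$ is already ergodic.

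The missing idea is to decompose the function $f$ itself, not the measure $\nu$. Let $S$ be the finite set of atoms off $\ker\chi_{g_0}$, after replacing $g_0$ by the multiple that absorbs the rational atoms, so every remaining value $\chi_{g_0}(s)$ is irrational. Group $S$ into classes $E$ by the value $\gamma_E$ of $\chi_{g_0}$, and write $\mathbf 1=\mathbf 1_{\ker\chi_{g_0}}+\sum_E\mathbf 1_E$ in $L^2(\T^2,\nu)$. Pulling back by $\theta^{-1}$ gives $f=f^{g_0}+\sum_E\varphi_E$, where $f^{g_0}$ is $g_0$-invariant and $g_0\cdot\varphi_E=\gamma_E\varphi_E$. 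Hence
\[
(ng_0)\cdot f-f=\sum_E\bigl(\gamma_E^{\,n}-1\bigr)\varphi_E\qquad\mu\text{-a.e., for all }n\ge 0\text{ simultaneously}.
\]
The left side is $\set{-1,0,1}$-valued. So at $\mu$-almost every $y$, Lemma~\ref{lemma:rational independence vanishing} applies with $x_E=\varphi_E(y)$ and forces the right side to vanish. Taking $n=1$ gives $g_0\cdot f=f$, and Theorem~\ref{theorem:bhattacharya correspondence} finishes. The lemma must be fed the pointwise values of the eigenfunction components $\varphi_E$, not the atom masses $\nu(\set{\xi})$, and no positivity argument is needed.
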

\begin{proof}
	Recall the setup of \S\ref{subsection:dynamical formulation}: the orbit closure $X=\overline{\Z^2\cdot T}$ carries a $\Z^2$-ergodic probability measure $\mu$, and from it we form $A=\set{x\in X:x(0,0)=1}$, $f=1_A$, the cyclic subspace $\mc H$, its spectral measure $\nu$, and the intertwining isomorphism $\theta\colon\mc H\to L^2(\T^2,\nu)$.

	The argument is a case analysis on the number of elements of $\Delta$ carrying infinitely many solutions of the vanishing equations~\eqref{equation:vanishing equations}.
	If two distinct directions do, then $F$ has divisible sections in two transverse directions and is therefore a product cluster $A'\times B'$ of prime-cardinality complete residue systems, to which Proposition~\ref{proposition:product periodicity} applies and yields an axis period of $T$.
	If at most one direction does, the spectral measure is concentrated enough to force $f$ itself to be $1$-periodic.
	Below are the details.

	Translating $F$ we may without loss of generality assume that $0\in F$.
	By Theorem~\ref{theorem:bhattacharya support} there is a minimal finite set $\Delta\subseteq\Z^2\setminus\set 0$ with $\supp(\nu)\subseteq\bigcup_{g\in\Delta}\ker\chi_g$.
	We distinguish two cases, according to how many directions of $\Delta$ carry infinitely many
	solutions of the vanishing equations~\eqref{equation:vanishing equations}.

	\begin{enumerate}[(1)]
		\item \emph{Suppose distinct $g_0,g_1\in\Delta$ each have infinitely
				many points of $\ker\chi_{g_i}$ satisfying \eqref{equation:vanishing equations}.}
			By part~(ii) of Lemma~\ref{lemma:divisible sections}, every line parallel to $g_0$ and every line parallel to $g_1$ meets $F$ in a multiple of $p$.
				As $g_0,g_1$ point in distinct directions (being distinct members of $\Delta$), Lemma~\ref{lemma:product structure}(i) shows that, with $u_0,u_1$ primitive direction vectors of $g_0,g_1$, the cluster is a product in the basis $(u_0,u_1)$,
			\[
				F=\set{au_0+bu_1:\ a\in A',\ b\in B'},\qquad A',B'\subseteq\Z,\ \abs{A'}=\abs{B'}=p.
			\]
			Since $F$ has full affine span, part~(iii) of that lemma makes $(u_0,u_1)$ a $\Z$-basis, with $\abs{\det(u_0,u_1)}=1$ and $\gcd(A'-A')=\gcd(B'-B')=1$, and in the unimodular coordinates $au_0+bu_1\mapsto(a,b)$ the cluster is the product $F=A'\times B'\subseteq\Z^2$.
			We claim $A'$ and $B'$ are complete residue systems modulo $p$.
			Apply Lemma~\ref{lemma:divisible sections} in the direction $g_0$ and let $m\ge1$ be the integer it provides.
			Write $q=\langle u_0,u_0\rangle=\abs{u_0}^2$ for the self-pairing of the primitive direction $h=u_0$ used in Lemma~\ref{lemma:divisible sections}.
			
			The idea is to convert the section divisibility of Lemma~\ref{lemma:divisible sections} into a divisibility for the one-variable mask polynomial $\widetilde A(x)=\sum_{a\in A'}x^{a}$, and then to apply Lemma~\ref{lemma:prime power vanishing sums} to conclude that $A'$ is a complete residue system modulo $p$.

			Fix $b\in B'$.
			The corresponding line parallel to $g_0$ meets $F$ in the section $F_\ell=\set{au_0+bu_1:a\in A'}$.
			Lemma~\ref{lemma:divisible sections}(i), applied in the direction $g_0$ with $h=u_0$, concerns the Laurent polynomial $\sum_{g\in F_\ell}w^{\langle g,h\rangle}$, in which the point $g\in F_\ell$ contributes the monomial $w^{\langle g,h\rangle}$ with exponent $\langle g,h\rangle$.
			For $g=au_0+bu_1$ this exponent is
			\[
				\langle g,h\rangle=\langle au_0+bu_1,u_0\rangle=qa+b\langle u_1,u_0\rangle,
			\]
			an affine function of $a$ with leading coefficient $q$ and a constant term $b\langle u_1,u_0\rangle$ independent of $a$.
			Summing $w^{\langle g,h\rangle}$ over the section therefore factors out that constant term:
			\[
				\sum_{g\in F_\ell}w^{\langle g,h\rangle}
				=w^{b\langle u_1,u_0\rangle}\sum_{a\in A'}w^{qa}
				=w^{b\langle u_1,u_0\rangle}\,\widetilde A\bigl(w^{q}\bigr).
			\]
			The prefactor $w^{b\langle u_1,u_0\rangle}$ is a unit in the Laurent ring, so the divisibility $\Phi_{p^m}(w)\mid\sum_{g\in F_\ell}w^{\langle g,h\rangle}$ of Lemma~\ref{lemma:divisible sections}(i) is the same as $\Phi_{p^m}(w)\mid\widetilde A(w^{q})$.
			With $\zeta=e^{2\pi i/p^m}$ the primitive $p^m$-th root of unity, this says $\widetilde A(\zeta^{q})=0$.

			The root $\zeta^{q}$ is again a prime-power root of unity: raising a primitive $p^m$-th root to the power $q$ yields a root of order $p^{m'}$, where $m'=m-v_p(q)$ and $v_p$ denotes the $p$-adic valuation.
			Here $m'\ge1$, for if $m'=0$ then $\zeta^{q}=1$ and $\widetilde A(\zeta^{q})=\abs{A'}=p\neq0$, contradicting $\widetilde A(\zeta^{q})=0$.
			Thus $\zeta^{q}$ is a primitive $p^{m'}$-th root of unity and a root of $\widetilde A$, so Lemma~\ref{lemma:prime power vanishing sums}, applied to the multiset $A'$, shows that the multiplicity function
			\[
				r\mapsto\abs{\set{a\in A':a\equiv r\pmod{p^{m'}}}}
			\]
			is constant on every coset of the order-$p$ subgroup $p^{m'-1}\Z/p^{m'}\Z$ of $\Z/p^{m'}\Z$.

			It remains to use the cardinality $\abs{A'}=p$.
			Each coset of the order-$p$ subgroup consists of $p$ residues modulo $p^{m'}$, on which the multiplicity takes a common value. If that value is $k$, the coset accounts for $pk$ elements of $A'$.
			Since the cosets partition the residues and $\abs{A'}=p$, exactly one coset has $k=1$ and every other has $k=0$.
			So $A'$ meets a single coset, once in each of its $p$ residues:
			\[
				A'\equiv\set{c+jp^{m'-1}:j=0,\dots,p-1}\pmod{p^{m'}}.
			\]
			Every difference of two elements of $A'$ is then a multiple of $p^{m'-1}$ modulo $p^{m'}$, so $p^{m'-1}\mid\gcd(A'-A')$.
			As $\gcd(A'-A')=1$ by full affine span, this forces $m'=1$.
			At $m'=1$ the modulus is $p$ and the order-$p$ subgroup is all of $\Z/p\Z$, so the single coset is every residue and $A'$ is a complete residue system modulo $p$.
			The identical argument in the direction $g_1$ shows $B'$ is a complete residue system modulo $p$.
			Let $M=(u_0\mid u_1)\in\mathrm{GL}_2(\Z)$ be the matrix with columns $u_0,u_1$, so that the coordinate map $a\,u_0+b\,u_1\mapsto(a,b)$ above is $M^{-1}$.
			It carries $F$ to the axis-aligned product $A'\times B'$ and $T$ to the $(A'\times B')$-tiling $M^{-1}T$, with $A',B'$ complete residue systems modulo $p$ and $\gcd(A'-A')=\gcd(B'-B')=1$.
			By Proposition~\ref{proposition:product periodicity}, $M^{-1}T$ has a coordinate period $(0,p)$ or $(p,0)$.
			Since $M\in\mathrm{GL}_2(\Z)$ maps periods of $M^{-1}T$ bijectively to periods of $T$, the tiling $T$ itself is $1$-periodic (and hence has a $1$-periodic point in its orbit closure).

		\item \emph{At most one direction carries infinitely many solutions.} If exactly one direction of $\Delta$ carries infinitely many solutions of~\eqref{equation:vanishing equations}, let $g_0\in\Delta$ be that direction.
			If none does, let $g_0$ be any element of $\Delta$, which is nonempty since $\supp(\nu)\neq\varnothing$ and $\supp(\nu)\subseteq\bigcup_{g\in\Delta}\ker\chi_g$.
			In either case, for every other $h\in\Delta\setminus\set{g_0}$ only finitely many points of $\ker\chi_h$ satisfy~\eqref{equation:vanishing equations} (when no direction is exceptional this holds for $g_0$ as well, which only helps).

			By the dilation lemma~\ref{theorem:dilation lemma}, every $x\in X$ is an $\alpha F$-tiling for $\alpha$ coprime to $p$, which is the identity
			\[
				\sum_{g\in F}(\alpha g)\cdot f=\mathbf 1_X
			\]
			in $L^2(X,\mu)$, where $\mathbf 1_X$ is the all-ones function and $(\alpha g)\cdot f$ is the translate of $f$ under the shift by $\alpha g$.
Now $\mathbf 1_X$ is invariant under every $g\in\Z^2$, and the only functions in $L^2(\T^2,\nu)$ invariant under all multiplications by $\chi_g$ are the multiples of $\mathbf 1_{\set 0}$, the indicator of the origin $0\in\T^2$.
Applying $\theta$ therefore gives, for some constant $\kappa$,
			\[
				\sum_{g\in F}\chi_{\alpha g}=\kappa\,\mathbf 1_{\set 0}\quad\text{in } L^2(\T^2,\nu),
				\qquad \alpha \text{ coprime to } p.
			\]
			For a function $\psi\colon\T^2\to\C$ write
			\[
				Z(\psi)=\set{\xi\in\T^2:\psi(\xi)=0}
			\]
			for its \define{zero set}.
			Each $\sum_{g\in F}\chi_{\alpha g}$ is a finite sum of characters, hence a continuous function on $\T^2$, so $Z\bigl(\sum_{g\in F}\chi_{\alpha g}\bigr)$ is a closed set.
			The equality above holds in $L^2(\T^2,\nu)$ and $\mathbf 1_{\set 0}$ vanishes off the origin, so $\sum_{g\in F}\chi_{\alpha g}=0$ holds $\nu$-almost everywhere on $\T^2\setminus\set0$.
			As the zero set is closed this gives
			\[
				\supp(\nu)
				\subseteq
				\lrb{
					\bigcap_{\gcd(\alpha,p)=1} Z\lrp{\sum_{g\in F}\chi_{\alpha g}}
					}\cup\set 0 .
			\]
			By the case hypothesis, for each $h\in\Delta\setminus\set{g_0}$ only finitely many points of $\ker\chi_h$ satisfy the vanishing equations~\eqref{equation:vanishing equations}, that is, the set
			$$
				S_h
				=\lrb{
					\bigcap_{\gcd(\alpha, p)=1}
						Z\lrp{\sum_{g\in F}\chi_{\alpha g}}
					}
					\cap\ker\chi_h
			$$
			is finite.
			Moreover, every support point off the origin satisfies all of \eqref{equation:vanishing equations}: by the inclusion above 
			$$
				\supp(\nu)\setminus\set 0\subseteq\bigcap_{\gcd(\alpha,p)=1}Z\lrp{\sum_{g\in F}\chi_{\alpha g}},
			$$
			so $\supp(\nu)\cap\ker\chi_h\subseteq S_h\cup\set 0$ for each such $h$.
			Combining with $\supp(\nu)\subseteq\bigcup_{g\in\Delta}\ker\chi_g$, the finite set $S=\lrp{\bigcup_{h\in\Delta\setminus\set{g_0}}S_h}\cup\set 0$ satisfies
			\[
				\supp(\nu)\subseteq\ker\chi_{g_0}\cup S .
			\]
			We now fix $S$ economically. Among all finite sets $S\subseteq\T^2$ and all positive integers $k$ for which $\supp(\nu)\subseteq\ker\chi_{kg_0}\cup S$, choose a pair making $\abs S$ as small as possible, and relabel $g_0$ to be the corresponding multiple, so that
			\[
				\supp(\nu)\subseteq\ker\chi_{g_0}\cup S
			\]
			holds with $S$ of least size.
			Discarding any points of $S$ lying in $\ker\chi_{g_0}$ leaves the inclusion intact without increasing $\abs S$, so we may also take $S$ disjoint from $\ker\chi_{g_0}$.
			Three properties follow from this minimal choice.
			\begin{enumerate}[(i)]
				\item \emph{Each $s\in S$ lies in $\supp(\nu)$.}
					Otherwise we would have
					\[
						\supp(\nu)\subseteq\ker\chi_{g_0}\cup(S\setminus\set s)
					\]
					(using $s\notin\ker\chi_{g_0}$), contradicting the minimality of $\abs S$.
				\item \emph{Each $s\in S$ has positive $\nu$-mass.}
					Since $S$ is finite and disjoint from the closed set $\ker\chi_{g_0}$, the point $s$ is isolated in $\ker\chi_{g_0}\cup S$, hence in $\supp(\nu)$, that is, some neighbourhood $U$ of $s$ meets $\supp(\nu)$ only at $s$.
					Then $\nu(\set s)=\nu(U)>0$, the inequality holding because $s\in\supp(\nu)$ by~(i).
				\item \emph{$\chi_{g_0}(s)$ is irrational, i.e.\ not a root of unity, for each $s\in S$.}
					Were $\chi_{g_0}(s)$ a root of unity, of order $k'$ say, then $\chi_{k'g_0}(s)=\chi_{g_0}(s)^{k'}=1$, that is $s\in\ker\chi_{k'g_0}$.
					Since $\ker\chi_{g_0}\subseteq\ker\chi_{k'g_0}$, this would give $\supp(\nu)\subseteq\ker\chi_{k'g_0}\cup(S\setminus\set s)$, a pair $(k'g_0,\,S\setminus\set s)$ with a strictly smaller set, contradicting minimality.
			\end{enumerate}

			\noindent
			We now show that $A$ is $1$-periodic, whether or not $S$ is empty. When $S=\varnothing$ the classes below are absent and the conclusion $g_0\cdot f=f$ is immediate.
			Partition $S$ by the value of $\chi_{g_0}$ into classes $E\in\mathcal E$ with representatives $p_E$.
			Then 
			$$
				\mathbf 1=\mathbf 1_{\ker\chi_{g_0}}+\sum_{E\in \mc E} \mathbf 1_E
			$$
			in $L^2(\T^2,\nu)$.
			Multiplying by $\chi_{ng_0}=\chi_{g_0}^{\,n}$, we get
			\[
				\chi_{ng_0}=\mathbf 1_{\ker\chi_{g_0}}+\sum_{E\in \mc E}\chi_{g_0}(p_E)^{n}\,\mathbf 1_E .
			\]
			Define $\varphi_E=\norm f_2\,\theta^{-1}(\mathbf 1_E)$ for each $E\in\mc E$, and let $f^{g_0}$ be the orthogonal projection of $f$ onto the $g_0$-invariant vectors of the cyclic subspace $\mc H$.
			Note that $f^{g_0}=\norm f_2\,\theta^{-1}(\mathbf 1_{\ker\chi_{g_0}})$.\footnote{
				Under $\theta$ the translation $g_0\cdot{}$ becomes multiplication by $\chi_{g_0}$, so a vector $h\in\mc H$ is $g_0$-invariant precisely when $\chi_{g_0}\,\theta(h)=\theta(h)$, that is, when $\theta(h)$ vanishes $\nu$-almost everywhere off $\ker\chi_{g_0}=\set{\xi:\chi_{g_0}(\xi)=1}$.
				Hence $\theta$ maps the $g_0$-invariant subspace onto $\mathbf 1_{\ker\chi_{g_0}}\,L^2(\T^2,\nu)$, and the orthogonal projection onto it corresponds under $\theta$ to multiplication by $\mathbf 1_{\ker\chi_{g_0}}$.
				Applying this to $f$ and using $\theta(f)=\norm f_2\,\mathbf 1$ gives $\theta(f^{g_0})=\norm f_2\,\mathbf 1_{\ker\chi_{g_0}}$, which is the stated relation.
			}
			Apply the inverse isometry $\theta^{-1}$ to the identity above and multiply through by $\norm f_2$, treating each summand separately.
			We get
			\[
				(ng_0)\cdot f=f^{g_0}+\sum_{E\in\mc E}\chi_{g_0}(p_E)^{n}\,\varphi_E\qquad\text{for all }n\in\Z .
			\]
			Specialising to $n=0$, where $\chi_0=\mathbf 1$ and $(0\cdot g_0)\cdot f=f$, gives $f=f^{g_0}+\sum_{E\in\mc E}\varphi_E$.
			Subtracting this from the previous identity cancels the $n$-independent term $f^{g_0}$ and leaves
			\[
				(ng_0)\cdot f - f
				=\sum_{E\in \mc E}
				\bigl(\chi_{g_0}(p_E)^{n}-1\bigr)\varphi_E .
			\]
			On a $\mu$-full set $Y\subseteq X$ this is a pointwise identity, and since $f$ and $(ng_0)\cdot f$ are $\set{0,1}$-valued their difference is an integer, so for every $y\in Y$ and $n\ge 0$,
			\[
				\sum_{E\in \mc E}\bigl(\chi_{g_0}(p_E)^{n}-1\bigr)\varphi_E(y)\in\Z .
			\]
			As each $\chi_{g_0}(p_E)$ is irrational, Lemma~\ref{lemma:rational independence vanishing} (whose only hypothesis is the irrationality of each $\gamma$, multiplicative relations among them being handled in its proof) forces this expression to be $0$ for all $n$.
			Taking $n=1$ gives $[g_0\cdot f](y)=f(y)$ for $y\in Y$, hence $g_0\cdot f=f$ in $L^2(X, \mu)$.
			Since $f=\mathbf 1_A$, this says $g_0\cdot A=A$ up to a $\mu$-null set, so $A$ is $1$-periodic with period $g_0$.
			By Theorem~\ref{theorem:bhattacharya correspondence}, $\mu$-almost every point of $X$ is a $1$-periodic $F$-tiling.
	\end{enumerate}

	In both cases $\overline{\Z^2\cdot T}$ contains a $1$-periodic $F$-tiling.
\end{proof}

\subsection{The threshold, and the open case \texorpdfstring{$\abs F=6$}{|F|=6}}
\label{subsection:threshold open}

How small can a cluster be before orbit-closure $1$-periodicity fails?
For every cardinality below $8$ except $6$, it does not fail.
When $\abs F$ is prime, Szegedy's theorem~\cite{szegedy_algorithms_to_tile} says that every $F$-tiling is already $1$-periodic, which covers $\abs F=2,3,5,7$.
The case $\abs F=1$ is trivial, and $\abs F=4=2^2$ is Theorem~\ref{theorem:p squared one periodic} above.
The eight-cell cluster of Section~\ref{section:counterexample}, on the other hand, fails: orbit-closure $1$-periodicity does not hold when $\abs F=8$.
So the smallest cardinality at which it can fail is either $6$ or $8$.
Only the case $\abs F=6$ is left. We cannot decide it, and we leave it open.

\appendix

\section{Proof of the prime-power vanishing criterion}
\label{appendix:vanishing sums}

Lemma~\ref{lemma:prime power vanishing sums} is the prime-power case of the R\'edei--de Bruijn--Schoenberg theorem, quoted in the main text from \cite[Theorem~2.2]{lam_leung_vanishing_sums}, whose proof is written in the language of integral group rings.
We claim no new argument.
The sole purpose of this appendix is to rephrase that proof in the language of polynomial rings, which keeps the paper self-contained and may be more transparent to a reader not accustomed to group-ring manipulations.
The translation is the ring isomorphism
\[
	\Z G\ \xrightarrow{\ \sim\ }\ \Z[z]/(z^{p^m}-1),\qquad g\mapsto z,
\]
where $G=\lrab g$ is cyclic of order $p^m$: Lam and Leung's kernel ideal $\Z G\cdot\Phi_{p^m}$ and its generator $\sigma(P_1)=\Phi_{p^m}$ become divisibility by the cyclotomic polynomial, and their ``constant on the cosets of the order-$p$ subgroup'' is condition~(c) verbatim.
The one computation, the equivalence of the divisibility condition~(b) and the coset-constancy condition~(c), we carry out by expanding a polynomial in powers of $z^{p^{m-1}}$ and checking divisibility one coset at a time.
Three classical facts are used: the identity $\Phi_{p^m}(z)=\Phi_p\bigl(z^{p^{m-1}}\bigr)$, that $\Phi_{p^m}$ is the minimal polynomial over $\Q$ of a primitive $p^m$-th root of unity, and Gauss's lemma on divisibility in $\Z[z]$.
Beyond these, only unique factorization in $\Z[z]$ is invoked, through Euclid's lemma.

Throughout we write
\[
	N=p^m,\qquad d=p^{m-1}
\]
so that
\[
	N=pd\quad\text{and}\quad
	\Phi_{p^m}(z)=\Phi_p\bigl(z^{d}\bigr)=1+z^{d}+z^{2d}+\cdots+z^{(p-1)d}.
\]
For a residue $k$ we write $c_k=\abs{\set{a\in S:\ a\equiv k\pmod N}}$ for the multiplicity function of condition~(c), the count taken with multiplicity, so that an element of $S$ occurring $j$ times in the multiset contributes $j$.
If $k\equiv k'\pmod N$ then $c_k=c_{k'}$.
Thus $c_k$ depends only on the class $k+N\Z$, and $c$ descends to a well-defined function on $\Z/N\Z$, sending a class to the number of elements of $S$ in it.
This is what lets condition~(c) speak of $c$ being constant on a coset.

\begin{proof}[Proof of Lemma~\ref{lemma:prime power vanishing sums}]
	First we reduce to the case where $S$ consists only of non-negative integers.
	If $S=\varnothing$ then $P=0$, all three conditions hold, and $p\mid 0$.
	Assume henceforth that $S\neq\varnothing$.
	Translating the exponents, $S\mapsto S+t$, replaces $P(z)$ by $z^{t}P(z)$ and leaves all three conditions unchanged, as we check in turn.
	\begin{enumerate}[(a)]
		\item The value $P(\zeta)$ changes only by the nonzero factor $\zeta^{t}$, so the vanishing $P(\zeta)=0$ is preserved.
		\item When $S$ has negative elements $P$ is a Laurent polynomial, so condition~(b) is not yet a statement about $\Z[z]$ and must first be read through a clearing monomial: for any integer $T$ with $z^{T}P(z)\in\Z[z]$, it means $\Phi_{p^m}(z)\mid z^{T}P(z)$ in $\Z[z]$.
			Because $\Phi_{p^m}(0)=1$, the polynomial $\Phi_{p^m}$ is coprime to $z$ in $\Z[z]$, and hence to every power of $z$.
			By Euclid's lemma in the unique factorization domain $\Z[z]$ (if $c\mid ab$ and $c$ is coprime to $a$, then $c\mid b$), multiplying the cleared polynomial $z^{T}P$ by a further power of $z$ does not change whether $\Phi_{p^m}$ divides it.
			Any two clearings are related by exactly such a multiplication: if $T\le T'$ are two valid choices, then $z^{T'}P=z^{T'-T}\,\bigl(z^{T}P\bigr)$.
			Hence the reading is independent of $T$, and the translation $P\mapsto z^{t}P$, which merely shifts the clearing monomial, leaves condition~(b) unchanged.
			(Once $S\subseteq\set{0,1,2,\dots}$, so that $P(z)\in\Z[z]$, no clearing is needed and~(b) is literally $\Phi_{p^m}\mid P$ in $\Z[z]$.)
		\item The multiplicity function is merely translated by $t$, so its constancy on the cosets of the order-$p$ subgroup is preserved.
	\end{enumerate}
	Translation also leaves the cardinality $\abs S$ unchanged, so the final divisibility $p\mid\abs S$ may likewise be proved after translating.
	Taking $t=-\min S$, we may therefore assume outright that $S\subseteq\set{0,1,2,\dots}$, so that $P(z)\in\Z[z]$.

	\medskip \noindent
	\emph{Reduction modulo $z^{N}-1$.}
	Dividing by $z^{N}-1$ and collecting the exponents of $P$ by their residue modulo $N$ gives
	\[
		P(z)\equiv\bar P(z):=\sum_{k=0}^{N-1}c_k\,z^{k}\pmod{z^{N}-1},
	\]
	where $c_k$ is the multiplicity above, because reducing $z^{a}$ modulo $z^{N}-1$ replaces it by $z^{a\bmod N}$.
	The geometric-sum identity $(w-1)\bigl(1+w+\cdots+w^{p-1}\bigr)=w^{p}-1$, evaluated at $w=z^{d}$, gives the factorization $z^{N}-1=\bigl(z^{d}-1\bigr)\Phi_{p^m}(z)$, so $\Phi_{p^m}(z)\mid z^{N}-1$ and hence $\Phi_{p^m}$ divides $P$ if and only if it divides $\bar P$.
	Likewise, $\zeta^{N}=1$ gives $P(\zeta)=\bar P(\zeta)$.
	We may thus work with $\bar P$, a polynomial of degree less than $N$, in place of $P$ throughout.

	\medskip \noindent
	\emph{Equivalence of \textup{(a)} and \textup{(b)}.}
	If $\Phi_{p^m}\mid\bar P$ in $\Z[z]$ then $\bar P(\zeta)=0$, because $\Phi_{p^m}(\zeta)=0$. This gives~(b)$\Rightarrow$(a).
	Conversely, suppose $\bar P(\zeta)=0$.
	As $\Phi_{p^m}$ is the minimal polynomial of $\zeta$ over $\Q$, it divides $\bar P$ in $\Q[z]$.
	Both $\Phi_{p^m}$ and $\bar P$ lie in $\Z[z]$ and $\Phi_{p^m}$ is monic, so the quotient already lies in $\Z[z]$ by Gauss's lemma, giving $\Phi_{p^m}\mid\bar P$ in $\Z[z]$.
	This is~(a)$\Rightarrow$(b).

	\medskip \noindent
	\emph{Equivalence of \textup{(b)} and \textup{(c)}.}
	Every $k\in\set{0,1,\dots,N-1}$ has a unique representation $k=r+jd$ with $0\le r<d$ and $0\le j<p$, coming from division by $d$.
	Grouping the terms of $\bar P$ accordingly expands it in powers of $z^{d}$:
	\begin{equation}
		\label{equation:base-d expansion}
		\bar P(z)
		=\sum_{r=0}^{d-1}z^{r}\,B_r\bigl(z^{d}\bigr),
			\qquad\text{where}\qquad
		B_r(w)
		=\sum_{j=0}^{p-1}c_{r+jd}\,w^{j}.
	\end{equation}
	Thus $B_r$ has degree at most $p-1$.
	Since \eqref{equation:base-d expansion} merely regroups the expansion $\bar P(z)=\sum_{k}c_{k}\,z^{k}$, the coefficient of $B_r$ on $w^{j}$ is $c_{r+jd}$, the value at the residue $r+jd$ of the multiplicity function of condition~(c).
	In the summand $z^{r}B_r\bigl(z^{d}\bigr)$ the term $c_{r+jd}\,w^{j}$ of $B_r$ becomes the monomial $c_{r+jd}\,z^{r+jd}$, so the exponents of $z$ appearing in this summand are $r,r+d,\dots,r+(p-1)d$.
	We now identify these $p$ exponents as a single coset modulo $N$.
	For an integer $a$ write $\bar a=a+p^m\Z$ for its class in $\Z/N\Z$.
	The order-$p$ subgroup of $\Z/N\Z$ is
	\[
		K
		= p^{m-1}\Z/p^m\Z
		= \set{\,\bar0,\ \bar d,\ \overline{2d},\ \dots,\ \overline{(p-1)d}\,},
	\]
	the cyclic subgroup generated by $\bar d$ (recall $d=p^{m-1}$).
	It has order $p$ because $pd=N$.
	Since $\overline{r+jd}=\bar r+\overline{jd}$ and $\overline{jd}$ ranges over all of $K$ as $j$ runs from $0$ to $p-1$, the classes $\overline{r+jd}$ are exactly the $p$ elements of the coset $\bar r+K$.
	In summary, the coefficients $c_r,c_{r+d},\dots,c_{r+(p-1)d}$ of $B_r$ are the values of the multiplicity function at the $p$ residues $r,r+d,\dots,r+(p-1)d$, whose classes make up the coset $\bar r+K$.

	We claim that $\Phi_{p^m}(z)=\Phi_p(z^{d})$ divides $\bar P(z)$ in $\Z[z]$ if and only if $\Phi_p(w)$ divides each $B_r(w)$ in $\Z[w]$.
	Multiplication by $\Phi_p(z^{d})$ preserves the residue of an exponent modulo $d$, since it involves only exponents divisible by $d$, and this is what decouples the divisibility across residues.
	If $B_r(w)=\Phi_p(w) Q_r(w)$ for polynomials $Q_r(w)\in\Z[w]$, then~\eqref{equation:base-d expansion} gives
	\[
		\bar P(z)
		=\Phi_p\bigl(z^{d}\bigr)\sum_{r=0}^{d-1}z^{r}\,Q_r\bigl(z^{d}\bigr),
	\]
	so $\Phi_{p^m}\mid\bar P$.
	Conversely, suppose $\bar P(z)=\Phi_p(z^{d})\,H(z)$ with $H(z)\in\Z[z]$.
	Expanding $H(z)=\sum_{s=0}^{d-1}z^{s}H_s(z^{d})$ in powers of $z^{d}$ as in~\eqref{equation:base-d expansion}, the summand $z^{s}\Phi_p(z^{d})H_s(z^{d})$ carries only exponents congruent to $s$ modulo $d$.
	Comparing the parts of $\bar P$ and of $\Phi_p(z^{d})H(z)$ whose exponents are congruent to $r$ modulo $d$ therefore gives
	\[
		z^{r}B_r\bigl(z^{d}\bigr)
		=z^{r}\Phi_p\bigl(z^{d}\bigr)H_r\bigl(z^{d}\bigr),
	\]
	whence $B_r(w)=\Phi_p(w)H_r(w)$ after cancelling $z^{r}$ and setting $w=z^{d}$, the substitution $w\mapsto z^{d}$ being an injection of $\Z[w]$ into $\Z[z]$.
	This proves the claim.

	It remains to show that divisibility of each $B_r$ by $\Phi_p$ amounts to coset-constancy.
	Since $\deg B_r\le p-1=\deg\Phi_p$, the divisibility $\Phi_p\mid B_r$ holds if and only if $B_r$ is a constant multiple of $\Phi_p$, say $B_r=c^{\ast}_r\,\Phi_p$ with $c^{\ast}_r\in\Z$: a nonzero quotient $Q_r$ in $B_r=\Phi_p\,Q_r$ has $\deg Q_r=\deg B_r-\deg\Phi_p\le 0$ and is therefore a nonzero integer, while $B_r=0$ corresponds to the constant $c^{\ast}_r=0$.
	Because $\Phi_p(w)=1+w+\cdots+w^{p-1}$ has all coefficients equal to $1$, this says exactly that $c_r=c_{r+d}=\cdots=c_{r+(p-1)d}$, all equal to $c^{\ast}_r$.
	As these $p$ residues represent the classes of the coset $\bar r+K$, and $c$ is a well-defined function on $\Z/N\Z$, this is precisely the statement that $c$ is constant on the coset $\bar r+K$.
	Letting $r$ range over $0,\dots,d-1$ covers every coset of the order-$p$ subgroup: each class in $\Z/N\Z$ is represented by some $k\in\set{0,1,\dots,N-1}$, and writing $k=r+jd$ as above places that class in the coset $\bar r+K$.
	So $\Phi_p\mid B_r$ for all $r$ is precisely condition~(c).
	Combined with the claim, this is the equivalence of~(b) and~(c).

	\smallskip \noindent
	\emph{The divisibility $p\mid\abs S$.}
	Suppose the equivalent conditions hold, so that $c_{r+jd}=c^{\ast}_r$ is independent of $j$.
	Summing the multiplicities over all residues,
	\[
		\abs S=\sum_{k=0}^{N-1}c_k=\sum_{r=0}^{d-1}\sum_{j=0}^{p-1}c_{r+jd}=\sum_{r=0}^{d-1}p\,c^{\ast}_r=p\sum_{r=0}^{d-1}c^{\ast}_r,
	\]
	which is divisible by $p$.
\end{proof}

\bibliographystyle{alpha}
\bibliography{OCC.bib}

\begin{thebibliography}{GHdMV18}

\bibitem[Bha16]{bhattacharya_tilings}
Siddhartha Bhattacharya.
\newblock Periodicity and decidability of tilings of {$\mathbb Z^2$}.
\newblock \texttt{arXiv:1602.05738}, 2016.

\bibitem[Cas00]{cassaigne_subword_complexity}
Julien Cassaigne.
\newblock Subword complexity and periodicity in two or more dimensions.
\newblock In Grzegorz Rozenberg and Wolfgang Thomas, editors, {\em Developments
  in Language Theory ({DLT} 1999)}, pages 14--21, River Edge, NJ, 2000. World
  Scientific.

\bibitem[CK15]{cyr_kra_nonexpansive}
Van Cyr and Bryna Kra.
\newblock Nonexpansive $\mathbb{Z}^2$-subdynamics and {N}ivat's conjecture.
\newblock {\em Transactions of the American Mathematical Society},
  367(9):6487--6537, 2015.

\bibitem[CM99]{coven_meyerowitz_tiling_integers}
Ethan~M. Coven and Aaron Meyerowitz.
\newblock Tiling the integers with translates of one finite set.
\newblock {\em Journal of Algebra}, 212(1):161--174, 1999.

\bibitem[EW11]{einsiedler_ward_ergodic_theory}
Manfred Einsiedler and Thomas Ward.
\newblock {\em Ergodic theory with a view towards number theory}, volume 259 of
  {\em Graduate Texts in Mathematics}.
\newblock Springer-Verlag London, Ltd., London, 2011.

\bibitem[GHdMV18]{grandjean_menibus_vanier_2018}
Anael Grandjean, Benjamin Hellouin~de Menibus, and Pascal Vanier.
\newblock Aperiodic points in {$\mathbb Z^2$}-subshifts.
\newblock In {\em 45th {I}nternational {C}olloquium on {A}utomata, {L}anguages,
  and {P}rogramming}, volume 107 of {\em LIPIcs. Leibniz Int. Proc. Inform.},
  pages Art. No. 128, 13. Schloss Dagstuhl. Leibniz-Zent. Inform., Wadern,
  2018.

\bibitem[GT21]{greenfeld_tao_2020}
Rachel Greenfeld and Terence Tao.
\newblock The structure of translational tilings in $\mathbb{Z}^d$.
\newblock {\em Discrete Anal.}, pages Paper No. 16, 28, 2021.
\newblock \texttt{arXiv:2010.03254}.

\bibitem[GT24]{greenfeld_tao_counterexample}
Rachel Greenfeld and Terence Tao.
\newblock A counterexample to the periodic tiling conjecture.
\newblock {\em Ann. of Math. (2)}, 200(1):301--363, 2024.
\newblock \texttt{arXiv:2211.15847}.

\bibitem[HK16]{horak_kim_algebraic_method}
Peter Horak and Dongryul Kim.
\newblock Algebraic method in tilings.
\newblock \texttt{arXiv:1603.00051}, 2016.

\bibitem[Kar19]{kari_low_complexity_survey}
Jarkko Kari.
\newblock Low-complexity tilings of the plane.
\newblock In {\em Descriptional complexity of formal systems}, volume 11612 of
  {\em Lecture Notes in Comput. Sci.}, pages 35--45. Springer, Cham, 2019.
\newblock Invited paper, DCFS 2019; \texttt{arXiv:1905.04183}.

\bibitem[Khe21a]{khetan_order2}
Abhishek Khetan.
\newblock On configurations of order 2.
\newblock \texttt{arXiv:2102.00803}, 2021.

\bibitem[Khe21b]{khetan_prime_squared}
Abhishek Khetan.
\newblock A periodicity result for tilings of $\mathbb{Z}^3$ by clusters of
  prime-squared cardinality.
\newblock \texttt{arXiv:2109.14179}, 2021.

\bibitem[KM23]{kari_mutot_low_comp}
Jarkko Kari and {\'E}tienne Moutot.
\newblock Decidability and periodicity of low complexity tilings.
\newblock {\em Theory Comput. Syst.}, 67(1):125--148, 2023.
\newblock Extended version of a paper in STACS 2020; \texttt{arXiv:1904.01267}.

\bibitem[KS20]{kari_szabados_alg_geom}
Jarkko Kari and Michal Szabados.
\newblock An algebraic geometric approach to {N}ivat's conjecture.
\newblock {\em Inform. and Comput.}, 271:104481, 25, 2020.

\bibitem[LL00]{lam_leung_vanishing_sums}
T.~Y. Lam and K.~H. Leung.
\newblock On vanishing sums of roots of unity.
\newblock {\em Journal of Algebra}, 224(1):91--109, 2000.

\bibitem[LL23]{laba_londner_three_primes}
Izabella \L{}aba and Itay Londner.
\newblock The {C}oven--{M}eyerowitz tiling conditions for 3 odd prime factors.
\newblock {\em Inventiones Mathematicae}, 232(1):365--470, 2023.

\bibitem[Lot02]{lothaire_words}
M.~Lothaire.
\newblock {\em Algebraic combinatorics on words}, volume~90 of {\em
  Encyclopedia of Mathematics and its Applications}.
\newblock Cambridge University Press, Cambridge, 2002.
\newblock Chapter 2: Sturmian words.

\bibitem[MH40]{morse_hedlund_1940}
Marston Morse and Gustav~A. Hedlund.
\newblock Symbolic dynamics {II}. {S}turmian trajectories.
\newblock {\em Amer. J. Math.}, 62(1):1--42, 1940.

\bibitem[Niv97]{nivat1997}
Maurice Nivat.
\newblock Invited talk at {ICALP} 1997.
\newblock 25th International Colloquium on Automata, Languages and Programming
  (EATCS keynote), 1997.

\bibitem[ST00]{sander_tijdeman_lattices}
J.~W. Sander and R.~Tijdeman.
\newblock The complexity of functions on lattices.
\newblock {\em Theoretical Computer Science}, 246(1--2):195--225, 2000.

\bibitem[Sze98]{szegedy_algorithms_to_tile}
Mario Szegedy.
\newblock Algorithms to tile the infinite grid with finite clusters.
\newblock In {\em 39th Annual Symposium on Foundations of Computer Science
  (FOCS 1998)}. IEEE, 1998.

\bibitem[Tij95]{tijdeman_decomposition_of}
R.~Tijdeman.
\newblock Decomposition of the integers as a direct sum of two subsets.
\newblock In {\em Number theory ({P}aris, 1992--1993)}, volume 215 of {\em
  London Math. Soc. Lecture Note Ser.}, pages 261--276. Cambridge Univ. Press,
  Cambridge, 1995.

\end{thebibliography}

\end{document}